\title{A Singular Woodbury and Pseudo-Determinant Matrix Identities and Application to Gaussian Process Regression}
\author[1,2,3]{Siavash Ameli\protect\thanks{Email address: \href{mailto:sameli@berkeley.edu}{\protect\nolinkurl{sameli@berkeley.edu}}}~}
\author[1]{Shawn C. Shadden\protect\thanks{Email address: \href{mailto:shadden@berkeley.edu}{\protect\nolinkurl{shadden@berkeley.edu}}}}
\affil[1]{\small\textit{Mechanical Engineering, University of California, Berkeley, CA 94720, USA}}
\affil[2]{\small\textit{Department of Statistics, University of California, Berkeley, CA 94720, USA}}
\affil[3]{\small\textit{International Computer Science Institute, Berkeley, CA 94704, USA}}
\date{}
\newcommand{\spc}[1]{\mathcal{#1}}
\newcommand{\image}{\operatorname{im}}
\newcommand{\coimage}{\operatorname{coim}}
\newcommand{\coker}{\operatorname{coker}}
\newcommand{\spn}{\operatorname{span}}
\newcommand{\pdet}{\operatorname{pdet}}
\newcommand{\logdet}{\operatorname{logdet}}
\newcommand{\diag}{\operatorname{diag}}
\newcommand{\ind}{\operatorname{ind}}
\newcommand{\bigO}{\mathcal{O}}
\newcommand{\tri}{\(\triangleright\)\ }
\newcommand{\indep}{\perp \!\!\! \perp}  
\renewcommand{\theAlgoLine}{%
\@arabic{\numexpr\value{algocf}\relax}.\@arabic{\numexpr\value{AlgoLine}\relax}}
\let\cref@old@stepcounter\stepcounter
\def\stepcounter#1{%
  \cref@old@stepcounter{#1}%
  \cref@constructprefix{#1}{\cref@result}%
  \@ifundefined{cref@#1@alias}%
    {\def\@tempa{#1}}%
    {\def\@tempa{\csname cref@#1@alias\endcsname}}%
  \protected@edef\cref@currentlabel{%
    [\@tempa][\arabic{#1}][\cref@result]%
    \csname p@#1\endcsname\csname the#1\endcsname}}
\renewcommand*{\@algocf@pre@ruled}{\hrule height\heavyrulewidth depth0pt \kern\belowrulesep}
\renewcommand*{\algocf@caption@ruled}{\box\algocf@capbox\kern\aboverulesep\hrule height\lightrulewidth\kern\belowrulesep}
\renewcommand*{\@algocf@post@ruled}{\kern\aboverulesep\hrule height\heavyrulewidth\relax}
\def\CC{{C\nolinebreak[4]\hspace{-.05em}\raisebox{.4ex}{\tiny\bf ++}}}
\DeclareFixedFont{\ttb}{T1}{txtt}{m}{n}{9} 
\DeclareFixedFont{\ttm}{T1}{txtt}{m}{n}{9}  
\definecolor{deepblue}{rgb}{0, 0, 0.5}
\definecolor{deepred}{rgb}{0.6, 0, 0}
\definecolor{deepgreen}{rgb}{0.4, 0.4, 0.4}
\definecolor{cmnt}{HTML}{778899}
\lstdefinestyle{myStyle}{
    language=Python,
    basicstyle=\small\ttfamily,
    morekeywords={self},              
    keywordstyle=\small\color{deepblue},
    emph={InterpolateTraceinv, detkit},          
    emphstyle=\small\ttfamily\bfseries\color{deepred},    
    commentstyle=\small\color{cmnt}\ttfamily\itshape,
    stringstyle=\color{deepgreen},
    frame=tb,                         
    showstringspaces=false,
    escapeinside={;\#}{\#;},           
    escapebegin=\color{cmnt},        
    numberstyle = \tiny\color{black},
    numbers=left,
    firstnumber=1,
    stepnumber=5,
    columns=flexible,
}
\begin{document}

\maketitle

\begin{abstract}
    We study a matrix that arises from a singular form of the Woodbury matrix identity. We present generalized inverse and pseudo-determinant identities for this matrix, which have direct applications for Gaussian process regression, specifically its likelihood representation and precision matrix. We extend the definition of the precision matrix to the Bott-Duffin inverse of the covariance matrix, preserving properties related to conditional independence, conditional precision, and marginal precision. We also provide an efficient algorithm and numerical analysis for the presented determinant identities and demonstrate their advantages under specific conditions relevant to computing log-determinant terms in likelihood functions of Gaussian process regression.
\end{abstract}

\smallskip
\noindent \textbf{Keywords.} Matrix Determinant Lemma, Outer Inverse, Bott-Duffin Inverse, EP Matrix, Likelihood Function, Precision Matrix


\section{Introduction}
The Woodbury matrix identity and the matrix determinant lemma are the fundamental relations respectively for the inverse and determinant of the sum of two matrices. Consider the matrix
\begin{equation}
    \tens{N} = \tens{A} + \tens{X} \tens{B} \tens{Y}^{\ast}, \label{eq:N-with-B}
\end{equation}
where \(\tens{A}\) and \(\tens{B}\) are invertible matrices and \(\tens{X}\) and \(\tens{Y}\) are matrices of conformable size. We assume all matrices are defined over the complex field and \((\cdot)^{\ast}\) denotes conjugate transpose. The Woodbury matrix identity \citep[p. 427]{HARVILLE-1997} represents the inverse of the above matrix, \(\tens{M} \coloneqq \tens{N}^{-1}\), whenever it exists, by
\begin{equation}
    \tens{M} = \tens{A}^{-1} - \tens{A}^{-1} \tens{X} \left(\tens{Y}^{\ast} \tens{A}^{-1} \tens{X} + \tens{B}^{-1} \right)^{-1} \tens{Y}^{\ast} \tens{A}^{-1}. \label{eq:M-with-B}
\end{equation}
Also, the matrix determinant lemma represents the determinant of \(\tens{N}\) by
\begin{equation}
    \det(\tens{N}) = \det(\tens{A}) \det(\tens{B}) \det(\tens{Y}^{\ast} \tens{A}^{-1} \tens{X} + \tens{B}^{-1}). \label{eq:det-lemma}
\end{equation}
The analytical importance and computational advantage of the above identities are well-known, with a wide range of applications in statistics, partial differential equations, optimization, asymptotic analysis, and networks, to name a few \citep{HAGER-1989}.

Several extensions to the above relations exist. Among notable works, \citet{HENDERSON-1981} derived inversion identities when \(\tens{A}\) is singular, or when \(\tens{B}\) is singular or rectangular. An alternative identity was derived by \citet{RIEDEL-1992} for rank-augmented matrices where the rank of \(\tens{N}\) is larger than of \(\tens{A}\). The relation between the generalized inverse of \(\tens{N}\) and those of \(\tens{A}\) and \(\tens{B}\) was explored by \citet{FILL-1999} and \citet{GROB-1999}. Other notable generalizations of the Woodbury identity were studied by \citet{TIAN-2005, ARIAS-2015} under a rank additivity condition and by \citet{DENG-2011} in relation to Moore-Penrose and Drazin inverses.

A special case of \eqref{eq:M-with-B} is when \(\tens{B}^{-1} = \tens{0}\), which is relevant in a variety of applications, such as in machine learning using Gaussian process regression (see \Cref{sec:app} and \citep{AMELI-2020, AMELI-2022-a}). In such a case, \(\tens{M}\) is usually rank-deficient and the matrix \(\tens{N}\) is undefined, rendering Woodbury-like identities and the determinant lemma inapplicable. Thus, the work herein develops Woodbury-like relations when \(\tens{B}^{-1} = \tens{0}\). Given that only \(\tens{M}\) is well-defined in this scenario, our approach shifts the focus to searching for identities for \(\tens{M}\) instead of \(\tens{N}\). Moreover, we consider that \(\tens{A}\), \(\tens{X}\), and \(\tens{Y}\) could be rectangular and potentially have rank deficiency. Concretely, we consider
\begin{equation}
    \tens{M} \coloneqq \tens{A}^{\dagger} - \tens{A}^{\dagger} \tens{X} \left(\tens{Y}^{\ast} \tens{A}^{\dagger} \tens{X}\right)^{\dagger} \tens{Y}^{\ast} \tens{A}^{\dagger}, \label{eq:M}
\end{equation}
where \((\cdot)^{\dagger}\) denotes the Moore-Penrose pseudo-inverse as defined in \Cref{sec:notation}. Our contributions are as follows.
\begin{itemize}
    \item We show \(\tens{M}\) is the generalized \(\{2\}\)-inverse (or outer inverse) of \(\tens{A}\), and we obtain its pseudo-inverse. We also derive an expression for \(\tens{M}\), which reduces to the Bott-Duffin inverse of \(\tens{A}\) whenever \(\tens{X}\) and \(\tens{Y}\) have the same column space.
    \item We obtain the pseudo-determinant of \(\tens{M}\)  assuming \(\tens{A}\) is an equal-principal matrix. Such relation also leads to a useful pseudo-determinant identity for \(\tens{A}\).
\end{itemize}
We further present practical applications of the derived identities to Gaussian process regression. Namely,
\begin{itemize}
    \item We show that the likelihood function of a form of Gaussian process regression can be recognized by the normal distribution, and we define its precision matrix by the Bott-Duffin inverse of its covariance matrix. We show such an extended definition retains the existing properties of the precision matrix related to conditional independence and the conditional and marginal precisions of the partitioned data.
    \item The presented matrix inversion identities enable us to derive an asymptotic analysis for the Gaussian process regression with a mixed linear model.
    \item We analyze the computational complexity of the pseudo-determinant identities and empirically demonstrate their numerical advantages on certain matrices.
\end{itemize}
The presented algorithms were implemented by the python package \texttt{detkit} \citep{AMELI-2022-d}, which is a determinant toolkit that can be used to reproduce the numerical results of this paper. The results of this work were also employed by \texttt{glearn}, a high-performance python package for machine learning using Gaussian process regression \citep{AMELI-2022-b}.

The paper is organized as follows. In \Cref{sec:notation}, we provide a brief overview of the generalized inverses of matrices. Our main results are presented in \Cref{sec:main}. Applications to the Gaussian process regression are discussed in \Cref{sec:app}. We present a numerical analysis in \Cref{sec:num}. \Cref{sec:conclusion} concludes the paper. Supporting proofs of the main results can be found in \Cref{sec:pf}. The necessary formulations for Gaussian process regression are outlined in \Cref{sec:gpr}. The dataset used in our numerical analysis is described in \Cref{sec:dataset}.


\section{Preliminaries} \label{sec:notation}

We indicate by \(\mathcal{M}_{n, m}(\mathbb{C})\) the space of all \(n \times m\) matrices with entries over the field \(\mathbb{C}\). For the matrix \(\tens{A} \in \mathcal{M}_{n, m}(\mathbb{C})\), we denote \(\image(\tens{A})\), \(\ker(\tens{A})\), \(\coimage(\tens{A}) = \mathbb{C}^m / \ker(\tens{A})\), and \(\coker(\tens{A}) = \mathbb{C}^n / \image(\tens{A})\) respectively by its image (range), kernel (null space), coimage, and cokernel. Due to the isomorphism of the quotient map \(\mathbb{C}^n / \image(\tens{A}) \cong \image{(\tens{A})}^{\perp}\) where \((\cdot)^{\perp}\) denotes orthogonal complement, we identify \(\coker(\tens{A})\) with \(\image({\tens{A}})^{\perp}\). Similarly, \(\ker(\tens{A}) = \coimage(\tens{A})^{\perp}\). Note that \(\ker(\tens{A}) = \coker(\tens{A}^{\ast})\), \(\image(\tens{A}) = \coimage(\tens{A}^{\ast})\), and \(\dim(\image(\tens{A})) = \dim(\coimage(\tens{A})) = \rank(\tens{A})\).

For a subspace \(\spc{X} \subseteq \mathbb{C}^n\), we say a square matrix \(\tens{A}\) is \(\spc{X}\)-zero if \(\tens{A} \spc{X} \cap \spc{X}^{\perp} = \{\tens{0}\}\) \citep[Definition 1]{CHEN-2003}. We also say a Hermitian matrix \(\tens{A}\) is \(\spc{X}\)-PD if the restriction of \(\vect{x} \mapsto \vect{x}^{\ast} \tens{A} \vect{x}\) on \(\spc{X} \setminus \{\vect{0}\}\) is positive-definite \citep[Definition 1]{YONGLIN-1990}. Also, the matrix \(\tens{P}_{\spc{R}, \spc{N}}\) with the complementary subspaces \(\spc{R}, \spc{N} \subseteq \mathbb{C}^n\) denotes the oblique projector onto \(\spc{R}\) along \(\spc{N}\), that is, \(\image(\tens{P}_{\spc{R}, \spc{N}}) = \spc{R}\) and \(\ker(\tens{P}_{\spc{R}, \spc{N}}) = \spc{N}\). In particular, \(\tens{P}_{\spc{R}}\) denotes the orthogonal projection matrix \(\tens{P}_{\spc{R}, \spc{R}^{\perp}}\), which can be constructed by \(\tens{P}_{\spc{R}} = \tens{R} \tens{R}^{\dagger}\) where \(\image(\tens{R}) = \spc{R}\). We note that \(\tens{P}_{\spc{N}, \spc{R}} = \tens{I} - \tens{P}_{\spc{R}, \spc{N}}\) is the complement projection to \(\tens{P}_{\spc{R}, \spc{N}}\) where \(\tens{I}\) is the identity matrix. In particular, \(\tens{P}_{\spc{R}^{\perp}} = \tens{I} - \tens{P}_{\spc{R}}\). Also, \(\tens{P}_{\spc{R}, \spc{N}}^{\ast} = \tens{P}_{\spc{N}^{\perp}, \spc{R}^{\perp}}\). In particular, \(\tens{P}_{\spc{R}}\) is Hermitian.

We briefly overview the generalized inverses of matrices that we use in the subsequent development, and we refer the reader to \citep{ISRAEL-2003, STANIMIROVIC-2017, WANG-2018} for further details. For a given rectangular matrix \(\tens{A}\), consider the following equations in \(\tens{Z}\) as
\begin{equation}
    (1)~ \tens{A} \tens{Z} \tens{A} = \tens{A},\quad
    (2)~ \tens{Z} \tens{A} \tens{Z} = \tens{Z}, \quad
    (3)~ \left( \tens{A} \tens{Z} \right)^{\ast} = \tens{A} \tens{Z}, \quad
    (4)~ \left( \tens{Z} \tens{A} \right)^{\ast} = \tens{Z} \tens{A},
    \label{eq:penrose}
\end{equation}
which are attributed to \citet{PENROSE-1955}. The Moore-Penrose inverse of \(\tens{A}\) is defined by the unique solution \(\tens{Z}\) that satisfies the four conditions in \eqref{eq:penrose}. More generally, a non-unique solution \(\tens{Z}\) that satisfies only partial conditions \((i), (j), \dots, (k)\) among the Penrose conditions \((1)\)--\((4)\) in \eqref{eq:penrose} is called the \(\{i, j, \dots, k\}\)-inverse of \(\tens{A}\), and denoted by \(\tens{A}^{(i, j, \dots, k)}\). Namely, the \(\{2\}\)-inverse, \(\tens{A}^{(2)}\), which satisfies the second condition of \eqref{eq:penrose} is known as the outer inverse of \(\tens{A}\). The outer inverse has several applications in statistics \citep{HSUAN-1985, GETSON-1988} and is of particular interest to this manuscript. The outer inverse of a matrix \(\tens{A}\) is not unique; however, it can be uniquely determined by prescribing its image \(\spc{R} \coloneqq \image (\tens{A}^{(2)})\) and kernel \(\spc{N} \coloneqq \ker (\tens{A}^{(2)})\) if and only if
\begin{equation}
    \tens{A} \spc{R} \oplus \spc{N} = \mathbb{C}^n, \label{eq:unique-XY}
\end{equation}
\citep[p. 72, Theorem 14]{ISRAEL-2003}; in such case the outer inverse is denoted by \(\tens{A}^{(2)}_{\spc{R}, \spc{N}}\). The outer inverse unifies the representation of several known types of generalized inverses \citep{WEI-1998, CHEN-2000}. For instance, since \(\image(\tens{A}^{\dagger}) = \image(\tens{A}^{\ast})\) and \(\ker(\tens{A}^{\dagger}) = \ker(\tens{A}^{\ast})\), \(\tens{A}^{\dagger}\) is a form of the outer inverse by
\begin{equation}
    \tens{A}^{\dagger} = \tens{A}^{(2)}_{\image(\tens{A}^{\ast}), \ker(\tens{A}^{\ast})}.
\end{equation}

Another type of generalized inverse was introduced by \citet{BOTT-1953}, which is a constrained inverse defined by the restriction of a square matrix \(\tens{A}\) on a subspace \(\spc{X}\) as
\begin{equation}
    \tens{A}^{(-1)}_{(\spc{X})} \coloneqq \tens{P}_{\spc{X}} ( \tens{P}_{\spc{X}^{\perp}} + \tens{A} \tens{P}_{\spc{X}} )^{-1}, \label{eq:def-bott}
\end{equation}
provided that \( \tens{P}_{\spc{X}^{\perp}} + \tens{A} \tens{P}_{\spc{X}} \) is non-singular for \eqref{eq:def-bott} to exist (for the singular case, see generalization of \eqref{eq:def-bott} by \citet{YONGLIN-1990}). The Bott-Duffin inverse is also a form of outer inverse by
\begin{equation}
    \tens{A}^{(-1)}_{(\spc{X})} = \tens{A}^{(2)}_{\spc{X}, \spc{X}^{\perp}}.
    \label{eq:outer-bott}
\end{equation}

In this paper, we often assume matrices are equal-principal (EP) (also known as range-Hermitian). We say a square matrix \(\tens{A}\) is EP if \(\image(\tens{A}) = \image(\tens{A}^{\ast})\), or equivalently, \(\image(\tens{A}) \perp \ker(\tens{A})\). 
A necessary and sufficient condition for \(\tens{A}\) to be EP is that it commutes with its pseudo-inverse, \ie
\begin{equation*}
    \tens{A} \tens{A}^{\dagger} = \tens{A}^{\dagger} \tens{A}.
\end{equation*}
EP matrices are an extension of normal matrices; a normal matrix commutes with its Hermitian conjugate and is unitarily similar to a diagonal matrix, whereas an EP matrix is unitarily similar to a core-nilpotent matrix. Thus, a normal matrix is EP. Furthermore, the index of an EP matrix is \(1\), where the index is defined by \(\ind(\tens{A}) \coloneqq \min\{k ~|~ \mathrm{rank}\big(\tens{A}^{k+1}\big) = \mathrm{rank}\big(\tens{A}^{k}\big)\}\). A comprehensive list of properties of EP matrices can be found in \citep{TIAN-2011}.


\section{Main Results} \label{sec:main}

Throughout this work, \(\tens{M}\) refers to the matrix defined in \eqref{eq:M}, unless otherwise stated. Suppose \(\tens{A} \in \mathcal{M}_{n,m}(\mathbb{C})\), \(\tens{X} \in \mathcal{M}_{n,p}(\mathbb{C})\), and \(\tens{Y} \in \mathcal{M}_{m, q}(\mathbb{C})\). Let \(\spc{X} \coloneqq \image(\tens{X})\) and \(\spc{Y} \coloneqq \image(\tens{Y})\) denote the subspaces spanned by the column spaces of \(\tens{X}\) and \(\tens{Y}\), respectively. Define \(\hat{\spc{X}} \coloneqq \ker(\tens{M})\) and \(\hat{\spc{Y}} \coloneqq \coker(\tens{M})\), and let \(\hat{\tens{X}}\) and \(\hat{\tens{Y}}\) denote matrices so that \(\hat{\spc{X}} = \image(\hat{\tens{X}})\) and \(\hat{\spc{Y}} = \image(\hat{\tens{Y}})\). Note that \(\hat{\spc{X}}^{\perp} = \coimage(\tens{M})\), \(\hat{\spc{Y}}^{\perp} = \image(\tens{M})\), and \(\dim(\hat{\spc{X}}^{\perp}) = \dim(\hat{\spc{Y}}^{\perp})\).

In \Cref{sec:special}, we present a particular but common condition that allows expressing \(\hat{\spc{X}}\) or \(\hat{\spc{Y}}\) in terms of the known spaces \(\spc{X}\) and \(\spc{Y}\). We obtain the generalized inverse and pseudo-determinant identities for \(\tens{M}\) in \Cref{sec:identity} and \Cref{sec:det}, respectively. The proofs of \Cref{sec:main} can be found in \Cref{sec:pf}.


\subsection{Kernel and Cokernel of \texorpdfstring{\(\tens{M}\)}{M}} \label{sec:special}

Here we show that either \(\hat{\spc{X}}\) or \(\hat{\spc{Y}}\) can be readily obtained from \(\spc{X}\) or \(\spc{Y}\), respectively if
\begin{subequations}
\begin{align}
        &\tens{A}^{\dagger \ast} \spc{Y} + \spc{X}^{\perp} = \mathbb{C}^n, \label{eq:A-yx-1} \\
        &\spc{Y}^{\perp} + \tens{A}^{\dagger} \spc{X} = \mathbb{C}^m, \label{eq:A-yx-2}
\end{align}
\end{subequations}
where \(\tens{A}^{\dagger \ast}\) indicates \((\tens{A}^{\dagger})^{\ast}\). The above conditions can also be expressed by other forms as follows.

\begin{lemma} \label{lem:F}
    Define \(\tens{F} \coloneqq \tens{Y}^{\ast} \tens{A}^{\dagger} \tens{X}\). The following conditions are equivalent:
    \begin{center}
    \begin{enumerate*}[label=(\alph*),ref=(\alph*),itemjoin=\qquad]
        \item \label{cond:a} \(\tens{A}^{\dagger \ast} \spc{Y} + \spc{X}^{\perp} = \mathbb{C}^n,\)
        \item \label{cond:b} \((\tens{A}^{\dagger \ast} \spc{Y})^{\perp} \cap \spc{X} = \{\vect{0}\},\)
        \item \label{cond:c} \(\ker(\tens{F}) = \ker(\tens{X})\).
    \end{enumerate*}
    \end{center}
    Similarly, the following conditions are equivalent:
    \begin{center}
    \begin{enumerate*}[label=(\alph*),ref=(\alph*),resume,itemjoin=\qquad]
        \setcounter{enumi}{3}
        \item \label{cond:d} \(\spc{Y}^{\perp} + \tens{A}^{\dagger} \spc{X} = \mathbb{C}^m,\)
        \item \label{cond:e} \(\spc{Y} \cap (\tens{A}^{\dagger} \spc{X})^{\perp} = \{\vect{0}\},\)
        \item \label{cond:f} \(\image(\tens{F}) = \image(\tens{Y}^{\ast})\).
    \end{enumerate*}
    \end{center}
    Furthermore, \ref{cond:a} to \ref{cond:f} are equivalent if and only if \(\dim(\spc{X}) = \dim(\spc{Y})\).
\end{lemma}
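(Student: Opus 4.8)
The plan is to prove the two triples separately: first \ref{cond:a}$\Leftrightarrow$\ref{cond:b}$\Leftrightarrow$\ref{cond:c} directly, then deduce \ref{cond:d}$\Leftrightarrow$\ref{cond:e}$\Leftrightarrow$\ref{cond:f} by a transpose substitution, and finally settle the concluding ``if and only if'' with a rank count.

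For the first triple, \ref{cond:a}$\Leftrightarrow$\ref{cond:b} is the standard duality $\spc{U} + \spc{V} = \mathbb{C}^n \iff \spc{U}^{\perp} \cap \spc{V}^{\perp} = \{\vect{0}\}$ applied with $\spc{U} = \tens{A}^{\dagger\ast}\spc{Y}$ and $\spc{V} = \spc{X}^{\perp}$, using $(\spc{X}^{\perp})^{\perp} = \spc{X}$. For \ref{cond:b}$\Leftrightarrow$\ref{cond:c}, first rewrite $(\tens{A}^{\dagger\ast}\spc{Y})^{\perp}$: since $\tens{A}^{\dagger\ast}\spc{Y} = \image(\tens{A}^{\dagger\ast}\tens{Y})$, $\image(\tens{Z})^{\perp} = \ker(\tens{Z}^{\ast})$, and $(\tens{A}^{\dagger\ast})^{\ast} = \tens{A}^{\dagger}$, this complement equals $\ker(\tens{Y}^{\ast}\tens{A}^{\dagger})$, so \ref{cond:b} reads $\ker(\tens{Y}^{\ast}\tens{A}^{\dagger}) \cap \image(\tens{X}) = \{\vect{0}\}$. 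A vector in that intersection has the form $\tens{X}\vect{w}$ with $\tens{F}\vect{w} = \tens{Y}^{\ast}\tens{A}^{\dagger}\tens{X}\vect{w} = \vect{0}$, and conversely $\tens{X}\vect{w}$ lies in the intersection for every $\vect{w} \in \ker\tens{F}$. Hence the intersection is trivial iff $\tens{X}\vect{w} = \vect{0}$ whenever $\tens{F}\vect{w} = \vect{0}$, i.e. $\ker\tens{F} \subseteq \ker\tens{X}$; together with the always-valid inclusion $\ker\tens{X} \subseteq \ker\tens{F}$ this is exactly \ref{cond:c}.

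The second triple then follows by applying the first triple --- proved above for arbitrary conformable matrices --- to the substituted data $(\tens{A}^{\ast}, \tens{Y}, \tens{X})$. Indeed $(\tens{A}^{\ast})^{\dagger\ast} = \tens{A}^{\dagger}$, the matrix playing the role of $\tens{F}$ becomes $\tens{X}^{\ast}(\tens{A}^{\ast})^{\dagger}\tens{Y} = \tens{F}^{\ast}$, and the three conditions turn into $\tens{A}^{\dagger}\spc{X} + \spc{Y}^{\perp} = \mathbb{C}^m$, $(\tens{A}^{\dagger}\spc{X})^{\perp} \cap \spc{Y} = \{\vect{0}\}$, and $\ker(\tens{F}^{\ast}) = \ker(\tens{Y})$. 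The first two are \ref{cond:d} and \ref{cond:e}, and the third is equivalent to \ref{cond:f} since $\image\tens{F} = \image\tens{Y}^{\ast} \iff \ker\tens{F}^{\ast} = \ker\tens{Y}$ upon taking orthogonal complements. This gives \ref{cond:d}$\Leftrightarrow$\ref{cond:e}$\Leftrightarrow$\ref{cond:f}.

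For the last sentence, observe that $\ker\tens{X} \subseteq \ker\tens{F}$ with equality iff $\rank\tens{F} = \rank\tens{X} = \dim\spc{X}$ (rank--nullity on $\mathbb{C}^p$), and dually $\image\tens{F} \subseteq \image\tens{Y}^{\ast}$ with equality iff $\rank\tens{F} = \rank\tens{Y} = \dim\spc{Y}$; thus \ref{cond:c} amounts to $\rank\tens{F} = \dim\spc{X}$ and \ref{cond:f} to $\rank\tens{F} = \dim\spc{Y}$. If $\dim\spc{X} = \dim\spc{Y}$ these two statements coincide, so all six conditions become equivalent. For the converse, when $\dim\spc{X} \neq \dim\spc{Y}$ the equalities $\rank\tens{F} = \dim\spc{X}$ and $\rank\tens{F} = \dim\spc{Y}$ are incompatible, so \ref{cond:c} and \ref{cond:f} are never simultaneously true, and one exhibits an $\tens{A}$ for which exactly one of them holds (arranging $\rank\tens{F}$ to equal the smaller of $\dim\spc{X}$ and $\dim\spc{Y}$) to break the equivalence. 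The routine portions are the two triples; the main obstacle is precisely this necessity direction, where one must be mindful that $\rank\tens{F}$ can fall strictly below both $\dim\spc{X}$ and $\dim\spc{Y}$ --- which would make \ref{cond:c} and \ref{cond:f} vacuously equivalent --- so the argument needs either a ``for all $\tens{A}$'' reading or an explicit construction realizing exactly one of the two.
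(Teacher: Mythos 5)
Your proposal is correct and follows essentially the same route as the paper: the duality \((\spc{U}+\spc{V})^{\perp}=\spc{U}^{\perp}\cap\spc{V}^{\perp}\) for (a)\(\Leftrightarrow\)(b), the identification \((\tens{A}^{\dagger\ast}\spc{Y})^{\perp}=\ker(\tens{Y}^{\ast}\tens{A}^{\dagger})\) driving the kernel argument for (b)\(\Leftrightarrow\)(c), a conjugate-transpose substitution to obtain the second triple from the first, and the rank characterizations \(\ker(\tens{F})=\ker(\tens{X})\Leftrightarrow\rank(\tens{F})=\rank(\tens{X})\) and \(\image(\tens{F})=\image(\tens{Y}^{\ast})\Leftrightarrow\rank(\tens{F})=\rank(\tens{Y})\) for the final claim. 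Your closing caveat about the necessity direction (that \(\rank(\tens{F})\) may drop below both \(\dim(\spc{X})\) and \(\dim(\spc{Y})\), so the ``only if'' needs a for-all-\(\tens{A}\) reading or an explicit witness) is a fair point that the paper's terse ``and vice versa'' glosses over, but it does not alter the fact that the two arguments coincide.
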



In practice, \eqref{eq:A-yx-1} and \eqref{eq:A-yx-2} can be verified by evaluating the conditions \ref{cond:c} and \ref{cond:f} in the above, respectively. A consequence of the above conditions is given below.

\begin{proposition} \label{prop:M-proj}
    The conditions \eqref{eq:A-yx-1} and \eqref{eq:A-yx-2} respectively imply
    \begin{subequations}
    \begin{align}
        \tens{M} &= \tens{A}^{\dagger} \tens{P}_{(\tens{A}^{\dagger \ast} \spc{Y})^{\perp}, \spc{X}}, \label{eq:M-P1} \\
        \tens{M} &= \tens{P}_{\spc{Y}^{\perp}, \tens{A}^{\dagger} \spc{X}} \tens{A}^{\dagger}. \label{eq:M-P2}
    \end{align}
    \end{subequations}
\end{proposition}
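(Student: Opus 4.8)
The plan is to prove \eqref{eq:M-P1} by checking that its two sides act identically on the decomposition $\mathbb{C}^{n} = (\tens{A}^{\dagger\ast}\spc{Y})^{\perp} \oplus \spc{X}$, and then to deduce \eqref{eq:M-P2} from \eqref{eq:M-P1} by a conjugate-transpose duality. Throughout, write $\tens{F} \coloneqq \tens{Y}^{\ast}\tens{A}^{\dagger}\tens{X}$ as in \Cref{lem:F}, so that $\tens{M} = \tens{A}^{\dagger} - \tens{A}^{\dagger}\tens{X}\,\tens{F}^{\dagger}\,\tens{Y}^{\ast}\tens{A}^{\dagger}$, and note the elementary identity $(\tens{A}^{\dagger\ast}\spc{Y})^{\perp} = \ker(\tens{Y}^{\ast}\tens{A}^{\dagger})$.

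First I would record two computations that need nothing beyond the definition of $\tens{M}$. (i) If $\vect{u} \in (\tens{A}^{\dagger\ast}\spc{Y})^{\perp}$, then $\tens{Y}^{\ast}\tens{A}^{\dagger}\vect{u} = \vect{0}$ and hence $\tens{M}\vect{u} = \tens{A}^{\dagger}\vect{u}$. (ii) If $\vect{v} = \tens{X}\vect{w} \in \spc{X}$, then substituting $\tens{Y}^{\ast}\tens{A}^{\dagger}\tens{X} = \tens{F}$ gives $\tens{M}\vect{v} = \tens{A}^{\dagger}\tens{X}\big(\tens{I} - \tens{F}^{\dagger}\tens{F}\big)\vect{w}$, and $\big(\tens{I} - \tens{F}^{\dagger}\tens{F}\big)\vect{w} \in \ker(\tens{F})$ by the Penrose identity $\tens{F}\tens{F}^{\dagger}\tens{F} = \tens{F}$. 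Now the hypothesis \eqref{eq:A-yx-1}, by \Cref{lem:F}, is equivalent to condition \ref{cond:c}, i.e. $\ker(\tens{F}) = \ker(\tens{X})$; so under \eqref{eq:A-yx-1} the expression in (ii) vanishes. Thus $\tens{M}$ annihilates $\spc{X}$ and restricts to $\tens{A}^{\dagger}$ on $(\tens{A}^{\dagger\ast}\spc{Y})^{\perp}$.

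The remaining step — which I expect to be the only genuinely delicate one — is to verify that $(\tens{A}^{\dagger\ast}\spc{Y})^{\perp}$ and $\spc{X}$ are complementary in $\mathbb{C}^{n}$, so that $\tens{P} \coloneqq \tens{P}_{(\tens{A}^{\dagger\ast}\spc{Y})^{\perp},\,\spc{X}}$ in \eqref{eq:M-P1} is well defined. That their intersection is trivial is exactly condition \ref{cond:b} of \Cref{lem:F}, again equivalent to \eqref{eq:A-yx-1}; the matching dimension count (equivalently $\dim\spc{X} = \dim(\tens{A}^{\dagger\ast}\spc{Y})$) then yields the direct sum, and this is where the equivalent forms of the hypothesis in \Cref{lem:F} have to be used with care. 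Granting it, on $(\tens{A}^{\dagger\ast}\spc{Y})^{\perp}$ we have $\tens{P} = \tens{I}$ so $\tens{A}^{\dagger}\tens{P} = \tens{A}^{\dagger} = \tens{M}$ by (i), and on $\spc{X}$ we have $\tens{P} = \tens{0}$ so $\tens{A}^{\dagger}\tens{P} = \tens{0} = \tens{M}$ by (ii); since $\tens{M}$ and $\tens{A}^{\dagger}\tens{P}$ agree on each summand of $\mathbb{C}^{n} = (\tens{A}^{\dagger\ast}\spc{Y})^{\perp} \oplus \spc{X}$, they coincide, giving \eqref{eq:M-P1}.

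Finally, for \eqref{eq:M-P2} I would invoke symmetry rather than repeat the argument. Taking conjugate transposes in \eqref{eq:M}, and using $(\tens{A}^{\ast})^{\dagger} = (\tens{A}^{\dagger})^{\ast}$, shows that $\tens{M}^{\ast}$ is precisely the matrix \eqref{eq:M} formed from $(\tens{A}^{\ast},\tens{Y},\tens{X})$ in place of $(\tens{A},\tens{X},\tens{Y})$, and that for this transposed data condition \eqref{eq:A-yx-1} is exactly the original \eqref{eq:A-yx-2}. Applying \eqref{eq:M-P1} to the transposed data gives $\tens{M}^{\ast} = \tens{A}^{\dagger\ast}\,\tens{P}_{(\tens{A}^{\dagger}\spc{X})^{\perp},\,\spc{Y}}$; taking conjugate transposes once more and using $\tens{P}_{\spc{R},\spc{N}}^{\ast} = \tens{P}_{\spc{N}^{\perp},\spc{R}^{\perp}}$ from \Cref{sec:notation} yields $\tens{M} = \tens{P}_{\spc{Y}^{\perp},\,\tens{A}^{\dagger}\spc{X}}\,\tens{A}^{\dagger}$, which is \eqref{eq:M-P2}.
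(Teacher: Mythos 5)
Your computations (i) and (ii) are correct, and your conjugate-transpose derivation of \eqref{eq:M-P2} from \eqref{eq:M-P1} is precisely how the paper passes from its first identity to the second. The problem is the step you flag as delicate and then ``grant'': the decomposition \((\tens{A}^{\dagger\ast}\spc{Y})^{\perp}\oplus\spc{X}=\mathbb{C}^{n}\) on which your whole argument for \eqref{eq:M-P1} rests. Condition \ref{cond:b} of \Cref{lem:F} gives only the trivial intersection, hence \(\dim(\tens{A}^{\dagger\ast}\spc{Y})\geq\dim(\spc{X})\); the reverse inequality --- your ``matching dimension count'' --- does not follow from \eqref{eq:A-yx-1}. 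Take \(\tens{A}=\tens{I}\) on \(\mathbb{C}^{2}\), \(\tens{X}=\vect{e}_1\), \(\tens{Y}=\tens{I}\): then \eqref{eq:A-yx-1} holds (and \(\ker(\tens{F})=\ker(\tens{X})=\{\vect{0}\}\)), but \((\tens{A}^{\dagger\ast}\spc{Y})^{\perp}=\{\vect{0}\}\) while \(\spc{X}\) is one-dimensional, so the two subspaces are not complementary and the projector \(\tens{P}_{(\tens{A}^{\dagger\ast}\spc{Y})^{\perp},\spc{X}}\) does not exist. Here \(\tens{M}=\tens{I}-\vect{e}_1\vect{e}_1^{\ast}\), the projector onto \(\spc{X}^{\perp}\) along \(\spc{X}\), not onto \((\tens{A}^{\dagger\ast}\spc{Y})^{\perp}\). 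Closing your argument therefore requires actually proving \(\dim(\tens{A}^{\dagger\ast}\spc{Y})=\dim(\spc{X})\), and that needs input beyond \eqref{eq:A-yx-1} alone (for instance \eqref{eq:A-yx-2} as well, as in the setting of \Cref{prop:A-XPD}).

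For comparison, the paper organizes the proof so that complementarity is automatic but the same difficulty is relocated rather than removed. It writes \(\tens{M}=\tens{A}^{\dagger}\tens{Q}_1\) with \(\tens{Q}_1=\tens{I}-\tens{X}\tens{F}^{\dagger}\tens{Y}^{\ast}\tens{A}^{\dagger}\), notes that \(\tens{Q}_1\) is idempotent (\Cref{lem:Q12}), so \(\image(\tens{Q}_1)\oplus\ker(\tens{Q}_1)=\mathbb{C}^{n}\) for free, and shows \(\ker(\tens{Q}_1)=\spc{X}\) from \ref{cond:c} --- this part matches your step (ii). The remaining identification \(\image(\tens{Q}_1)=(\tens{A}^{\dagger\ast}\spc{Y})^{\perp}\), equivalently \(\ker(\tens{Q}_1^{\ast})=\tens{A}^{\dagger\ast}\spc{Y}\), is asserted to be ``similar'' and its proof omitted; unwinding it, one needs \(\tens{F}\tens{F}^{\dagger}\tens{Y}^{\ast}\tens{A}^{\dagger}=\tens{Y}^{\ast}\tens{A}^{\dagger}\), that is \(\image(\tens{F})=\image(\tens{Y}^{\ast}\tens{A}^{\dagger})\), that is \(\rank(\tens{F})=\dim(\tens{A}^{\dagger\ast}\spc{Y})\), which combined with \(\rank(\tens{F})=\rank(\tens{X})\) from \ref{cond:c} is exactly the dimension count you skipped. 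So you have located the genuinely delicate point correctly, but granting it is not a proof: under the stated hypothesis the granted claim can fail, and until the missing dimension equality is supplied (or an additional hypothesis is invoked) the argument for \eqref{eq:M-P1} is incomplete.
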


The matrix \(\tens{M}\) given by the forms of \eqref{eq:M-P1} and \eqref{eq:M-P2} allows us to express its kernel and cokernel in terms of \(\spc{X}\), \(\spc{Y}\), and the kernel and cokernel of \(\tens{A}\), as follows.

\begin{theorem} \label{thm:M-ker-coker}
    The conditions \eqref{eq:A-yx-1} and \eqref{eq:A-yx-2} respectively imply
    \begin{subequations}
    \begin{align}
        &\hat{\spc{X}} = \spc{X} \oplus ((\tens{A}^{\dagger \ast} \spc{Y})^{\perp} \cap \coker(\tens{A})), \label{eq:hat-X} \\
        &\hat{\spc{Y}} = \spc{Y} \oplus ((\tens{A}^{\dagger}\spc{X})^{\perp} \cap \ker(\tens{A})). \label{eq:hat-Y}
    \end{align}
    \end{subequations}
\end{theorem}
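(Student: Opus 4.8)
The plan is to read off both identities from the projector representations of \(\tens{M}\) established in \Cref{prop:M-proj}, together with two elementary facts about pseudo-inverses: \(\ker(\tens{A}^{\dagger}) = \ker(\tens{A}^{\ast}) = \coker(\tens{A})\) (since \(\tens{A}^{\dagger}\) has kernel \(\ker(\tens{A}^{\ast})\), which we identify with \(\image(\tens{A})^{\perp}\)), and \(\ker(\tens{A}^{\dagger \ast}) = \ker\big((\tens{A}^{\ast})^{\dagger}\big) = \ker(\tens{A})\). Recall also from \Cref{sec:notation} that an oblique projector \(\tens{P}_{\spc{R}, \spc{N}}\) is idempotent with image \(\spc{R}\) and kernel \(\spc{N}\), that \(\spc{R}\) and \(\spc{N}\) are complementary, and that \(\tens{I} - \tens{P}_{\spc{R}, \spc{N}} = \tens{P}_{\spc{N}, \spc{R}}\) has image \(\spc{N}\).

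For \eqref{eq:hat-X}, I would assume \eqref{eq:A-yx-1} and use \eqref{eq:M-P1} to write \(\tens{M} = \tens{A}^{\dagger} \tens{P}\) with \(\tens{P} \coloneqq \tens{P}_{(\tens{A}^{\dagger \ast} \spc{Y})^{\perp}, \spc{X}}\), so that \(\image(\tens{P}) = (\tens{A}^{\dagger \ast} \spc{Y})^{\perp}\) and \(\ker(\tens{P}) = \spc{X}\) are complementary in \(\mathbb{C}^n\). For any \(\vect{v} \in \mathbb{C}^n\), decompose \(\vect{v} = \tens{P}\vect{v} + (\tens{I} - \tens{P})\vect{v}\), where \(\tens{P}\vect{v} \in (\tens{A}^{\dagger \ast} \spc{Y})^{\perp}\) and \((\tens{I} - \tens{P})\vect{v} \in \spc{X}\). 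Since \(\tens{M}\vect{v} = \tens{A}^{\dagger}(\tens{P}\vect{v})\) and \(\tens{A}^{\dagger}\) annihilates exactly \(\coker(\tens{A})\), we get \(\vect{v} \in \ker(\tens{M})\) if and only if \(\tens{P}\vect{v} \in (\tens{A}^{\dagger \ast} \spc{Y})^{\perp} \cap \coker(\tens{A})\); reading this equivalence in both directions yields \(\ker(\tens{M}) = \spc{X} + \big((\tens{A}^{\dagger \ast} \spc{Y})^{\perp} \cap \coker(\tens{A})\big)\). The sum is direct because its second summand lies in \((\tens{A}^{\dagger \ast} \spc{Y})^{\perp}\), which meets \(\spc{X}\) only at \(\vect{0}\). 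This is \eqref{eq:hat-X}.

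For \eqref{eq:hat-Y}, I would assume \eqref{eq:A-yx-2} and take adjoints in \eqref{eq:M-P2}. Using \(\tens{P}_{\spc{R}, \spc{N}}^{\ast} = \tens{P}_{\spc{N}^{\perp}, \spc{R}^{\perp}}\), this gives \(\tens{M}^{\ast} = \tens{A}^{\dagger \ast}\, \tens{P}_{(\tens{A}^{\dagger} \spc{X})^{\perp}, \spc{Y}}\), which has exactly the same shape as the expression for \(\tens{M}\) above with \(\tens{A}^{\dagger}\) replaced by \(\tens{A}^{\dagger \ast}\), the subspace \((\tens{A}^{\dagger \ast}\spc{Y})^{\perp}\) replaced by \((\tens{A}^{\dagger}\spc{X})^{\perp}\), and \(\spc{X}\) replaced by \(\spc{Y}\). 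Repeating the argument of the previous paragraph verbatim, now using \(\ker(\tens{A}^{\dagger \ast}) = \ker(\tens{A})\), produces \(\ker(\tens{M}^{\ast}) = \spc{Y} \oplus \big((\tens{A}^{\dagger}\spc{X})^{\perp} \cap \ker(\tens{A})\big)\). Since \(\coker(\tens{M})\) is identified with \(\image(\tens{M})^{\perp} = \ker(\tens{M}^{\ast})\), this is \eqref{eq:hat-Y}.

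The only delicate point is bookkeeping: one must keep straight which member of each complementary pair is the image and which is the kernel of the oblique projector in \eqref{eq:M-P1}--\eqref{eq:M-P2} (and note that these projectors are genuinely well-defined, i.e. the subspace pairs are complementary, which \Cref{prop:M-proj} already encodes in its notation), and must correctly identify \(\ker(\tens{A}^{\dagger})\) with \(\coker(\tens{A})\) and \(\ker(\tens{A}^{\dagger \ast})\) with \(\ker(\tens{A})\). Everything else reduces to the idempotent decomposition \(\vect{v} = \tens{P}\vect{v} + (\tens{I} - \tens{P})\vect{v}\) and the observation that adding \(\ker(\tens{P})\) to a subspace of \(\image(\tens{P})\) is automatically a direct sum.
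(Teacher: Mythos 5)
Your proof is correct, and it follows the same essential route as the paper: both arguments hinge on the projector representations \eqref{eq:M-P1}--\eqref{eq:M-P2} from \Cref{prop:M-proj} and extract the kernel by splitting a vector along the oblique projector. The only structural difference is in the machinery: the paper first passes from \(\tens{M}\) to \(\tens{A}\tens{M} = \tens{P}_{\spc{A}}\,\tens{P}_{(\tens{A}^{\dagger\ast}\spc{Y})^{\perp},\spc{X}}\) using the \(\{1\}\)-inverse identity \(\ker(\tens{M}) = \ker(\tens{M}^{(1)}\tens{M})\) (via \Cref{rem:inv12}), so that both factors are projectors and a standalone lemma on \(\ker(\tens{P}_2\tens{P}_1)\) applies; and it handles the cokernel through \(\image(\tens{M}) = \image(\tens{M}\tens{A})\) rather than by taking adjoints. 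You instead compute \(\ker(\tens{A}^{\dagger}\tens{P})\) directly from \(\ker(\tens{A}^{\dagger}) = \ker(\tens{A}^{\ast}) = \coker(\tens{A})\), which is slightly more economical since it needs neither \Cref{prop:MA} nor the product-of-projectors lemma -- the underlying decomposition \(\vect{v} = \tens{P}\vect{v} + (\tens{I}-\tens{P})\vect{v}\) is identical in both. Your justification of directness (the second summand sits in \((\tens{A}^{\dagger\ast}\spc{Y})^{\perp}\), which meets \(\spc{X}\) trivially by condition \ref{cond:b} of \Cref{lem:F}) and your use of \(\coker(\tens{M}) = \ker(\tens{M}^{\ast})\) together with \(\tens{P}_{\spc{R},\spc{N}}^{\ast} = \tens{P}_{\spc{N}^{\perp},\spc{R}^{\perp}}\) are both sound.
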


\begin{corollary} \label{cor:XeqY}
    If \(\tens{A} \in \mathcal{M}_{n, n}(\mathbb{C})\) is non-singular and either of \eqref{eq:A-yx-1} or \eqref{eq:A-yx-2} holds, then \(\hat{\spc{X}} = \spc{X}\) and \(\hat{\spc{Y}} = \spc{Y}\), and we may set \(\hat{\tens{X}} = \tens{X}\) and \(\hat{\tens{Y}} = \tens{Y}\).
\end{corollary}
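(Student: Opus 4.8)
The plan is to read the corollary off \Cref{prop:M-proj}, using non-singularity of $\tens{A}$ to make the projector forms of $\tens{M}$ transparent. Since $\tens{A}$ is square ($m=n$) and invertible, $\tens{A}^{\dagger}=\tens{A}^{-1}$, which is a bijection of $\mathbb{C}^n$; multiplying a matrix by $\tens{A}^{-1}$ on one side therefore leaves its kernel or its image easy to identify, and $\coker(\tens{A})=\ker(\tens{A})=\{\vect{0}\}$. The one point to keep in mind is that the hypothesis is an inclusive ``or'': each of the two cases must be shown to deliver \emph{both} conclusions on its own, which is why I work from the projector form rather than reading $\hat{\spc{X}}$ off one half of \Cref{thm:M-ker-coker} and $\hat{\spc{Y}}$ off the other (only one of \eqref{eq:A-yx-1}, \eqref{eq:A-yx-2} is assumed).

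\emph{Case} \eqref{eq:A-yx-1}. By \eqref{eq:M-P1}, $\tens{M}=\tens{A}^{-1}\tens{P}_{\spc{W},\spc{X}}$ with $\spc{W}\coloneqq(\tens{A}^{-\ast}\spc{Y})^{\perp}$. Injectivity of $\tens{A}^{-1}$ gives $\hat{\spc{X}}=\ker(\tens{M})=\ker(\tens{P}_{\spc{W},\spc{X}})=\spc{X}$, and surjectivity gives $\image(\tens{M})=\tens{A}^{-1}\image(\tens{P}_{\spc{W},\spc{X}})=\tens{A}^{-1}\spc{W}$, so that $\hat{\spc{Y}}=\image(\tens{M})^{\perp}=(\tens{A}^{-1}\spc{W})^{\perp}$. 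The routine identity $(\tens{A}^{-1}\spc{W})^{\perp}=\tens{A}^{\ast}\spc{W}^{\perp}$ (valid for any invertible $\tens{A}$ and subspace $\spc{W}$), together with $\spc{W}^{\perp}=\tens{A}^{-\ast}\spc{Y}$, yields $\hat{\spc{Y}}=\tens{A}^{\ast}\tens{A}^{-\ast}\spc{Y}=\spc{Y}$.

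\emph{Case} \eqref{eq:A-yx-2}. By \eqref{eq:M-P2}, $\tens{M}=\tens{P}_{\spc{Y}^{\perp},\spc{V}}\tens{A}^{-1}$ with $\spc{V}\coloneqq\tens{A}^{-1}\spc{X}$. Surjectivity of $\tens{A}^{-1}$ gives $\image(\tens{M})=\image(\tens{P}_{\spc{Y}^{\perp},\spc{V}})=\spc{Y}^{\perp}$, hence $\hat{\spc{Y}}=\spc{Y}$; and $\tens{M}\vect{v}=\vect{0}$ iff $\tens{A}^{-1}\vect{v}\in\ker(\tens{P}_{\spc{Y}^{\perp},\spc{V}})=\spc{V}$ iff $\vect{v}\in\tens{A}\spc{V}=\spc{X}$, so $\hat{\spc{X}}=\ker(\tens{M})=\spc{X}$. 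In both cases $\hat{\spc{X}}=\spc{X}=\image(\tens{X})$ and $\hat{\spc{Y}}=\spc{Y}=\image(\tens{Y})$; since $\hat{\tens{X}}$ and $\hat{\tens{Y}}$ are only required to satisfy $\image(\hat{\tens{X}})=\hat{\spc{X}}$ and $\image(\hat{\tens{Y}})=\hat{\spc{Y}}$, we may take $\hat{\tens{X}}=\tens{X}$ and $\hat{\tens{Y}}=\tens{Y}$.

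There is no real obstacle here; the argument is essentially bookkeeping with images, kernels, and orthogonal complements under the bijection $\tens{A}^{-1}$. (One could alternatively note that for non-singular square $\tens{A}$ the intersection terms in \eqref{eq:hat-X}, \eqref{eq:hat-Y} vanish because $\coker(\tens{A})=\ker(\tens{A})=\{\vect{0}\}$, but recovering the conclusion \emph{not} matched by the assumed hypothesis still forces one back to \Cref{prop:M-proj}, so no work is saved.)
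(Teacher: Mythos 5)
Your argument is correct as a derivation from \Cref{prop:M-proj}, but it takes a genuinely different route from the paper's. The paper's proof is a two-line specialization of \Cref{thm:M-ker-coker}: non-singularity gives \(\ker(\tens{A})=\coker(\tens{A})=\{\vect{0}\}\), so the intersection terms in \eqref{eq:hat-X} and \eqref{eq:hat-Y} vanish and \(\hat{\spc{X}}=\spc{X}\), \(\hat{\spc{Y}}=\spc{Y}\) follow ``respectively.'' You instead compute \(\ker(\tens{M})\) and \(\image(\tens{M})\) directly from the projector representations \eqref{eq:M-P1}--\eqref{eq:M-P2} using the bijectivity of \(\tens{A}^{-1}\); the bookkeeping there, including \((\tens{A}^{-1}\spc{W})^{\perp}=\tens{A}^{\ast}\spc{W}^{\perp}\), is all sound. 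Your stated motivation---that the hypothesis is an inclusive ``or'' while \Cref{thm:M-ker-coker} ties \eqref{eq:hat-X} to \eqref{eq:A-yx-1} and \eqref{eq:hat-Y} to \eqref{eq:A-yx-2}---points at a real imprecision that the paper's own proof silently glosses over.

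Be aware, however, that your fix relocates rather than eliminates that issue. In your Case \eqref{eq:A-yx-1}, the identification \(\image(\tens{M})=\tens{A}^{-1}\spc{W}\) rests on the half of \Cref{prop:M-proj} whose proof the paper omits ``for brevity,'' and that is precisely where the second hypothesis hides: for the oblique projector \(\tens{P}_{(\tens{A}^{-\ast}\spc{Y})^{\perp},\spc{X}}\) to exist, the two subspaces must be complementary, which for non-singular \(\tens{A}\) forces \(\dim(\spc{X})=\dim(\spc{Y})\) and hence, by \Cref{lem:F}, the equivalence of \eqref{eq:A-yx-1} and \eqref{eq:A-yx-2} anyway. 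When the dimensions differ, the conclusion not matched by the assumed hypothesis can genuinely fail: take \(n=2\), \(\tens{A}=\tens{I}\), \(\tens{X}=\vect{e}_1\), \(\tens{Y}=\tens{I}\); then \eqref{eq:A-yx-1} holds, \(\tens{M}=\diag(0,1)\), \(\hat{\spc{X}}=\spn(\vect{e}_1)=\spc{X}\), but \(\hat{\spc{Y}}=\spn(\vect{e}_1)\neq\mathbb{C}^2=\spc{Y}\). So your extra care is well placed, and granting \Cref{prop:M-proj} at face value your proof is complete; but a fully self-contained argument for the ``either/or'' reading would need the dimension condition (equivalently, both hypotheses) made explicit.
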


\Cref{cor:XeqY} justifies our notation for the kernel and cokernel of \(\tens{M}\) as \(\hat{\spc{X}}\) and \(\hat{\spc{Y}}\). Namely, whenever the hypothesis of \Cref{cor:XeqY} is satisfied, we can obtain the corresponding relations by omitting the hat symbol on matrices and subspaces from the subsequent expressions. We note our subsequent development does not necessitate the above hypotheses, however, such conditions are common. A practical application (see \Cref{sec:app}) that fulfills \eqref{eq:A-yx-1} and \eqref{eq:A-yx-2} is given below.

\begin{proposition} \label{prop:A-XPD}
    Both \eqref{eq:A-yx-1} and \eqref{eq:A-yx-2} are satisfied if \(\spc{X} = \spc{Y}\) and \(\tens{A}^{\dagger}\) is Hermitian and \(\spc{X}\)-PD.
\end{proposition}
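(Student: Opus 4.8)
The plan is to reduce both conditions to a single subspace identity and then verify it directly from the $\spc{X}$-PD hypothesis. First I would note that if $\tens{A}^{\dagger}$ is Hermitian then $\tens{A}^{\dagger} \in \mathcal{M}_{m,n}(\mathbb{C})$ must be square, so $n = m$ and $\tens{A}^{\dagger \ast} = \tens{A}^{\dagger}$. Combining this with $\spc{X} = \spc{Y}$, the condition \eqref{eq:A-yx-1} becomes $\tens{A}^{\dagger} \spc{X} + \spc{X}^{\perp} = \mathbb{C}^n$, and \eqref{eq:A-yx-2} becomes $\spc{X}^{\perp} + \tens{A}^{\dagger} \spc{X} = \mathbb{C}^n$, which is the same statement. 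Hence it suffices to prove the single identity $\tens{A}^{\dagger} \spc{X} + \spc{X}^{\perp} = \mathbb{C}^n$ under the assumption that $\tens{A}^{\dagger}$ is Hermitian and $\spc{X}$-PD.

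Next I would draw two consequences from the $\spc{X}$-PD property, which guarantees $\vect{x}^{\ast} \tens{A}^{\dagger} \vect{x} > 0$ for every $\vect{x} \in \spc{X} \setminus \{\vect{0}\}$. The first is that $\tens{A}^{\dagger}$ restricted to $\spc{X}$ is injective: if $\vect{x} \in \spc{X}$ with $\tens{A}^{\dagger} \vect{x} = \vect{0}$, then $\vect{x}^{\ast} \tens{A}^{\dagger} \vect{x} = 0$, forcing $\vect{x} = \vect{0}$; consequently $\dim(\tens{A}^{\dagger} \spc{X}) = \dim(\spc{X})$. The second is that $\tens{A}^{\dagger} \spc{X} \cap \spc{X}^{\perp} = \{\vect{0}\}$: if $\vect{z} = \tens{A}^{\dagger} \vect{x}$ for some $\vect{x} \in \spc{X}$ and also $\vect{z} \in \spc{X}^{\perp}$, then in particular $\vect{x}^{\ast} \vect{z} = 0$, i.e.\ $\vect{x}^{\ast} \tens{A}^{\dagger} \vect{x} = 0$, so again $\vect{x} = \vect{0}$ and therefore $\vect{z} = \vect{0}$.

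Finally I would conclude by a dimension count. Since $\dim(\spc{X}^{\perp}) = n - \dim(\spc{X})$, the subspaces $\tens{A}^{\dagger} \spc{X}$ and $\spc{X}^{\perp}$ have dimensions summing to $n$, and as their intersection is trivial their sum is all of $\mathbb{C}^n$ (indeed a direct sum). This yields $\tens{A}^{\dagger} \spc{X} + \spc{X}^{\perp} = \mathbb{C}^n$, hence both \eqref{eq:A-yx-1} and \eqref{eq:A-yx-2}.

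The argument is short and I do not anticipate a genuine obstacle; the only step requiring a little care is the reduction — observing that the Hermitian hypothesis makes $\tens{A}^{\dagger \ast} = \tens{A}^{\dagger}$ and collapses the two conditions into one — together with using the $\spc{X}$-PD property in its strict form (positivity on $\spc{X} \setminus \{\vect{0}\}$) to obtain both the injectivity of $\tens{A}^{\dagger}$ on $\spc{X}$ and the triviality of $\tens{A}^{\dagger}\spc{X} \cap \spc{X}^{\perp}$. As an alternative route, one could instead verify condition \ref{cond:c} (equivalently \ref{cond:f}) of \Cref{lem:F}, i.e.\ that $\ker(\tens{Y}^{\ast} \tens{A}^{\dagger} \tens{X}) = \ker(\tens{X})$, but the subspace computation above appears the most transparent.
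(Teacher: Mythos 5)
Your proof is correct and follows essentially the same route as the paper's: both derive from the $\spc{X}$-PD hypothesis that $\tens{A}^{\dagger}\spc{X} \cap \spc{X}^{\perp} = \{\vect{0}\}$ and that $\tens{A}^{\dagger}$ is injective on $\spc{X}$, then conclude $\tens{A}^{\dagger}\spc{X} \oplus \spc{X}^{\perp} = \mathbb{C}^n$ by a dimension count. Your explicit observation that the Hermitian hypothesis collapses \eqref{eq:A-yx-1} and \eqref{eq:A-yx-2} into a single identity is a slightly more careful rendering of the paper's terse ``and consequently \eqref{eq:A-yx-1}.''
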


In general, we can state the followings about the kernel and cokernel of \(\tens{M}\).

\begin{proposition} \label{prop:M-singular}
    \(\tens{M} = \tens{A}^{\dagger}\) if and only if \(\tens{A}^{\dagger \ast} \spc{Y} \perp \spc{X}\), or equivalently \(\spc{Y} \perp \tens{A}^{\dagger} \spc{X}\).
\end{proposition}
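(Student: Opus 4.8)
The plan is to collapse the statement to the single matrix equation $\tens{F} = \tens{0}$, where $\tens{F} \coloneqq \tens{Y}^{\ast}\tens{A}^{\dagger}\tens{X}$ as in \Cref{lem:F}, and then handle both implications. Two preliminary observations do most of the work. From the definition \eqref{eq:M}, the equality $\tens{M} = \tens{A}^{\dagger}$ holds precisely when $\tens{A}^{\dagger}\tens{X}\tens{F}^{\dagger}\tens{Y}^{\ast}\tens{A}^{\dagger} = \tens{0}$, i.e.\ when the correction term in \eqref{eq:M} vanishes. Second, translating the orthogonality relations into coordinates, for all $\vect{y}\in\spc{Y}$ and $\vect{x}\in\spc{X}$ one has $(\tens{A}^{\dagger\ast}\vect{y})^{\ast}\vect{x} = \vect{y}^{\ast}\tens{A}^{\dagger}\vect{x} = \langle \vect{y}, \tens{A}^{\dagger}\vect{x}\rangle$; since the columns of $\tens{X}$ and $\tens{Y}$ span $\spc{X}$ and $\spc{Y}$, each of the two stated conditions $\tens{A}^{\dagger\ast}\spc{Y}\perp\spc{X}$ and $\spc{Y}\perp\tens{A}^{\dagger}\spc{X}$ is equivalent to $\tens{Y}^{\ast}\tens{A}^{\dagger}\tens{X} = \tens{F} = \tens{0}$. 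In particular the two conditions coincide, and it remains only to show $\tens{M} = \tens{A}^{\dagger}$ if and only if $\tens{F} = \tens{0}$.

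The direction $\tens{F} = \tens{0}\Rightarrow\tens{M} = \tens{A}^{\dagger}$ is immediate, since $\tens{F} = \tens{0}$ forces $\tens{F}^{\dagger} = \tens{0}$, so the correction term in \eqref{eq:M} is zero. For the converse I would start from $\tens{A}^{\dagger}\tens{X}\tens{F}^{\dagger}\tens{Y}^{\ast}\tens{A}^{\dagger} = \tens{0}$, multiply on the left by $\tens{Y}^{\ast}$ and on the right by $\tens{X}$, and regroup: the left-hand side becomes $(\tens{Y}^{\ast}\tens{A}^{\dagger}\tens{X})\tens{F}^{\dagger}(\tens{Y}^{\ast}\tens{A}^{\dagger}\tens{X}) = \tens{F}\tens{F}^{\dagger}\tens{F}$, while the right-hand side remains $\tens{0}$. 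By the first Penrose condition in \eqref{eq:penrose}, $\tens{F}\tens{F}^{\dagger}\tens{F} = \tens{F}$, hence $\tens{F} = \tens{0}$.

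There is essentially no serious obstacle here. The only step that requires a small idea is the converse, where the trick is to sandwich the vanishing expression between $\tens{Y}^{\ast}$ and $\tens{X}$ so that both outer factors fold into $\tens{F}$ and the Penrose identity $\tens{F}\tens{F}^{\dagger}\tens{F} = \tens{F}$ becomes available. The one point I would be careful to state explicitly is the inner-product convention used when rewriting the orthogonality relations, but nothing deeper than the manipulations above is involved.
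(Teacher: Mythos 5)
Your proof is correct, and in the converse direction it takes a genuinely different --- and tighter --- route than the paper. Both arguments share the same setup: \(\tens{M}=\tens{A}^{\dagger}\) holds iff the correction term \(\tens{A}^{\dagger}\tens{X}\tens{F}^{\dagger}\tens{Y}^{\ast}\tens{A}^{\dagger}\) vanishes, both orthogonality conditions amount to \(\tens{F}=\tens{0}\), and sufficiency is the same trivial observation (\(\tens{F}=\tens{0}\Rightarrow\tens{F}^{\dagger}=\tens{0}\)). For necessity, however, the paper argues by subspaces: it lists the inclusions \(\image(\tens{Y}^{\ast}\tens{A}^{\dagger})\subseteq\ker(\tens{F}^{\dagger})\) and \(\image(\tens{F}^{\dagger})\subseteq\ker(\tens{A}^{\dagger}\tens{X})\) as ``possibilities,'' rules them out using \(\ker(\tens{F}^{\dagger})\perp\image(\tens{F})\) and \(\image(\tens{F}^{\dagger})\perp\ker(\tens{F})\), and then concludes that the ``remaining feasible case'' is that one of the three factors is null. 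Your sandwich argument --- multiply the vanishing correction term by \(\tens{Y}^{\ast}\) on the left and \(\tens{X}\) on the right to obtain \(\tens{F}\tens{F}^{\dagger}\tens{F}=\tens{0}\), then invoke the first Penrose condition \(\tens{F}\tens{F}^{\dagger}\tens{F}=\tens{F}\) --- reaches the same conclusion in one line and avoids the case enumeration entirely. It is arguably the more rigorous of the two, since a product of three matrices can vanish in more ways than the two inclusions the paper enumerates, so the paper's exhaustion step leaves something for the reader to fill in, whereas your identity closes the argument outright. Nothing is missing from your write-up.
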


\begin{proposition} \label{prop:XY-share}
    If \(\tens{A}^{\dagger \ast} \spc{Y} \not\perp \spc{X}\), then \(\hat{\spc{X}} \cap \spc{X} \neq \{\vect{0}\}\) and \(\hat{\spc{Y}} \cap \spc{Y} \neq \{\vect{0}\}\).
\end{proposition}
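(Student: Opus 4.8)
The plan is to first trade the geometric hypothesis for the algebraic one $\tens{F} \neq \tens{0}$, where $\tens{F} \coloneqq \tens{Y}^{\ast}\tens{A}^{\dagger}\tens{X}$ as in \Cref{lem:F}. For $\vect{y} \in \spc{Y}$ and $\vect{x} \in \spc{X}$ one has $\langle \tens{A}^{\dagger \ast}\vect{y}, \vect{x}\rangle = \vect{y}^{\ast}\tens{A}^{\dagger}\vect{x} = \langle\vect{y}, \tens{A}^{\dagger}\vect{x}\rangle$, so the two stated forms of the hypothesis, $\tens{A}^{\dagger \ast}\spc{Y}\not\perp\spc{X}$ and $\spc{Y}\not\perp\tens{A}^{\dagger}\spc{X}$, both say exactly that the form $(\vect{w},\vect{v})\mapsto(\tens{Y}\vect{w})^{\ast}\tens{A}^{\dagger}(\tens{X}\vect{v}) = \vect{w}^{\ast}\tens{F}\vect{v}$ does not vanish identically, i.e. $\tens{F}\neq\tens{0}$.

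Next I would compute $\tens{M}\tens{X}$ directly from \eqref{eq:M}: since $\tens{Y}^{\ast}\tens{A}^{\dagger}\tens{X} = \tens{F}$,
\begin{equation*}
\tens{M}\tens{X} = \tens{A}^{\dagger}\tens{X} - \tens{A}^{\dagger}\tens{X}\tens{F}^{\dagger}\tens{F} = \tens{A}^{\dagger}\tens{X}\bigl(\tens{I} - \tens{F}^{\dagger}\tens{F}\bigr) = \tens{A}^{\dagger}\tens{X}\,\tens{P}_{\ker(\tens{F})},
\end{equation*}
because $\tens{F}^{\dagger}\tens{F}$ is a Hermitian idempotent with range $\image(\tens{F}^{\dagger}) = \image(\tens{F}^{\ast}) = \ker(\tens{F})^{\perp}$, hence $\tens{I} - \tens{F}^{\dagger}\tens{F} = \tens{P}_{\ker(\tens{F})}$. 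Consequently, every $\vect{v} \in \ker(\tens{F})^{\perp} = \image(\tens{F}^{\ast})$ satisfies $\tens{M}(\tens{X}\vect{v}) = \tens{0}$, so $\tens{X}\vect{v} \in \spc{X}\cap\ker(\tens{M}) = \spc{X}\cap\hat{\spc{X}}$. It then remains to find such a $\vect{v}$ with $\tens{X}\vect{v}\neq\vect{0}$. Here the hypothesis enters: from $\tens{F} = \tens{Y}^{\ast}\tens{A}^{\dagger}\tens{X}$ we have $\ker(\tens{X})\subseteq\ker(\tens{F})$, equivalently $\image(\tens{F}^{\ast}) = \ker(\tens{F})^{\perp}\subseteq\ker(\tens{X})^{\perp}$; if $\image(\tens{F}^{\ast})$ were also contained in $\ker(\tens{X})$ it would lie in $\ker(\tens{X})\cap\ker(\tens{X})^{\perp} = \{\vect{0}\}$, forcing $\tens{F} = \tens{0}$, a contradiction. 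Thus some $\vect{v}\in\image(\tens{F}^{\ast})$ has $\tens{X}\vect{v}\neq\vect{0}$, which gives $\hat{\spc{X}}\cap\spc{X}\neq\{\vect{0}\}$.

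For the cokernel statement I would exploit the symmetry of \eqref{eq:M} under conjugate transposition. Taking adjoints and using $(\tens{A}^{\ast})^{\dagger} = \tens{A}^{\dagger \ast}$ and $(\tens{F}^{\dagger})^{\ast} = (\tens{F}^{\ast})^{\dagger}$ yields
\begin{equation*}
\tens{M}^{\ast} = \tens{A}^{\dagger \ast} - \tens{A}^{\dagger \ast}\tens{Y}\bigl(\tens{X}^{\ast}\tens{A}^{\dagger \ast}\tens{Y}\bigr)^{\dagger}\tens{X}^{\ast}\tens{A}^{\dagger \ast},
\end{equation*}
which is precisely the matrix of the form \eqref{eq:M} associated with the triple $(\tens{A}^{\ast},\tens{Y},\tens{X})$; its corresponding $\tens{F}$-matrix is $\tens{X}^{\ast}\tens{A}^{\dagger \ast}\tens{Y} = \tens{F}^{\ast}$, so the relevant hypothesis for that triple is again $\tens{F}^{\ast}\neq\tens{0}$, i.e. $\tens{F}\neq\tens{0}$. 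Applying the first part to $(\tens{A}^{\ast},\tens{Y},\tens{X})$ gives $\ker(\tens{M}^{\ast})\cap\image(\tens{Y})\neq\{\vect{0}\}$, and since $\ker(\tens{M}^{\ast}) = \image(\tens{M})^{\perp} = \coker(\tens{M}) = \hat{\spc{Y}}$, this is $\hat{\spc{Y}}\cap\spc{Y}\neq\{\vect{0}\}$.

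The main obstacle is the middle step: guaranteeing that $\vect{v}$ can be chosen inside $\image(\tens{F}^{\ast})$ while avoiding $\ker(\tens{X})$. The rest is bookkeeping with orthogonal projectors and adjoints. The two facts that make it work are the inclusion $\ker(\tens{X})\subseteq\ker(\tens{F})$ (equivalently $\image(\tens{F}^{\ast})\subseteq\image(\tens{X}^{\ast})$) and the reformulation $\tens{F}\neq\tens{0}$ of the hypothesis. If one prefers to avoid the duality argument, the cokernel case can instead be handled symmetrically by computing $\tens{Y}^{\ast}\tens{M} = (\tens{I} - \tens{F}\tens{F}^{\dagger})\tens{Y}^{\ast}\tens{A}^{\dagger}$, noting $\tens{I} - \tens{F}\tens{F}^{\dagger} = \tens{P}_{\image(\tens{F})^{\perp}}$ and $\ker(\tens{Y})\subseteq\ker(\tens{F}^{\ast})$, and repeating the argument with $\image(\tens{F})$ in place of $\image(\tens{F}^{\ast})$.
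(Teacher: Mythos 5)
Your proposal is correct and follows essentially the same route as the paper: both reduce the hypothesis to \(\tens{F} \neq \tens{0}\), compute \(\tens{M}\tens{X} = \tens{A}^{\dagger}\tens{X}(\tens{I} - \tens{F}^{\dagger}\tens{F})\), and obtain the cokernel statement by passing to the adjoint \(\tens{M}^{\ast}\). The only difference is in the finishing step: where the paper splits into cases on whether \(\tens{F}\) has full column rank and argues via \(\rank(\tens{M}\tens{X}) < \rank(\tens{X})\), you exhibit an explicit nonzero witness \(\tens{X}\vect{v}\) with \(\vect{v} \in \image(\tens{F}^{\ast}) \subseteq \ker(\tens{X})^{\perp}\), which is a minor (and arguably cleaner) variant that avoids the case split.
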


The above statements imply that \(\tens{M}\) is always rank-deficient, \ie \(\hat{\spc{X}}, \hat{\spc{Y}} \neq \{\vect{0}\}\), except if \(\tens{A}\) is full-rank and \(\tens{A}^{\dagger \ast} \spc{Y} \perp \spc{X}\) (including when \(\tens{X}\) or \(\tens{Y}\) is null).


\subsection{Generalized Inverse Identities} \label{sec:identity}

In the followings, we observe \(\tens{M}\) is an outer inverse of \(\tens{A}\) and obtain relations that can be readily implied by the properties of the outer inverse, such as the pseudo-inverse of \(\tens{M}\).

\begin{proposition} \label{prop:MA}
    The matrix \(\tens{M} \in \mathcal{M}_{m, n}(\mathbb{C})\) in \eqref{eq:M} is the outer inverse of \(\tens{A} \in \mathcal{M}_{n, m}(\mathbb{C})\) by
    \begin{equation}
        \tens{M} = \tens{A}^{(2)}_{\hat{\spc{Y}}^{\perp}, \hat{\spc{X}}}. \label{eq:MA2}
    \end{equation}
    Furthermore, \(\dim(\hat{\spc{Y}}) + \rank(\tens{A}) \geq m\).
\end{proposition}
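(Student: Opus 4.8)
The plan is to verify directly that $\tens{M}$ obeys the second Penrose equation in \eqref{eq:penrose}, namely $\tens{M}\tens{A}\tens{M} = \tens{M}$, and then simply to read off its image and null space, which by the definitions preceding \Cref{thm:M-ker-coker} are exactly $\hat{\spc{Y}}^{\perp} = \image(\tens{M})$ and $\hat{\spc{X}} = \ker(\tens{M})$. Writing $\tens{F} \coloneqq \tens{Y}^{\ast}\tens{A}^{\dagger}\tens{X}$ as in \Cref{lem:F}, so that $\tens{M} = \tens{A}^{\dagger} - \tens{A}^{\dagger}\tens{X}\tens{F}^{\dagger}\tens{Y}^{\ast}\tens{A}^{\dagger}$, the first step is the auxiliary identity $\tens{M}\tens{A}\tens{A}^{\dagger} = \tens{M}$: expanding $\tens{M}\tens{A}\tens{A}^{\dagger}$ and applying $\tens{A}^{\dagger}\tens{A}\tens{A}^{\dagger} = \tens{A}^{\dagger}$ (directly in the first term, and after pulling $\tens{Y}^{\ast}$ to the far left in the second) collapses it back to $\tens{M}$. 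Equivalently, this records that $\coker(\tens{A}) \subseteq \ker(\tens{M})$.

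Next I would compute $\tens{M}\tens{A}\tens{M}$ by substituting the defining expression into the rightmost factor and invoking the auxiliary identity, which reduces it to $\tens{M}\tens{A}\tens{M} = \tens{M} - \tens{M}\tens{X}\tens{F}^{\dagger}\tens{Y}^{\ast}\tens{A}^{\dagger}$. The remaining work is to show $\tens{M}\tens{X}\tens{F}^{\dagger} = \tens{0}$. Here $\tens{M}\tens{X} = \tens{A}^{\dagger}\tens{X}(\tens{I} - \tens{F}^{\dagger}\tens{F})$, since the trailing block $\tens{Y}^{\ast}\tens{A}^{\dagger}\tens{X}$ in the second term of $\tens{M}\tens{X}$ is precisely $\tens{F}$; right-multiplying by $\tens{F}^{\dagger}$ and using $\tens{F}^{\dagger}\tens{F}\tens{F}^{\dagger} = \tens{F}^{\dagger}$ then annihilates it. Hence $\tens{M}\tens{A}\tens{M} = \tens{M}$, so $\tens{M}$ is a $\{2\}$-inverse of $\tens{A}$.

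By construction this $\{2\}$-inverse has range $\image(\tens{M}) = \hat{\spc{Y}}^{\perp}$ and null space $\ker(\tens{M}) = \hat{\spc{X}}$, i.e.\ it is an outer inverse of $\tens{A}$ with these prescribed subspaces. The existence of such an outer inverse forces the direct-sum decomposition $\tens{A}\hat{\spc{Y}}^{\perp}\oplus\hat{\spc{X}} = \mathbb{C}^n$ and makes it the unique matrix $\tens{A}^{(2)}_{\hat{\spc{Y}}^{\perp},\hat{\spc{X}}}$ \citep[p.~72, Theorem~14]{ISRAEL-2003}, yielding \eqref{eq:MA2}. (The decomposition can also be produced by hand: for $v \in \tens{A}\hat{\spc{Y}}^{\perp}\cap\hat{\spc{X}}$, writing $v = \tens{A}\tens{M}u$ and using $\tens{M}v = \tens{M}\tens{A}\tens{M}u = \tens{M}u = 0$ forces $v = 0$, the same cancellation shows $\tens{A}$ is injective on $\image(\tens{M})$, and a dimension count via rank–nullity for $\tens{M}$ closes the argument.)

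For the closing inequality, factor $\tens{M} = \tens{A}^{\dagger}(\tens{I} - \tens{X}\tens{F}^{\dagger}\tens{Y}^{\ast}\tens{A}^{\dagger})$, so that $\image(\tens{M}) \subseteq \image(\tens{A}^{\dagger}) = \image(\tens{A}^{\ast})$ and hence $\rank(\tens{M}) \leq \rank(\tens{A})$; since $\hat{\spc{Y}} = \coker(\tens{M})$ has dimension $m - \rank(\tens{M})$, we obtain $\dim(\hat{\spc{Y}}) + \rank(\tens{A}) \geq m$. The one delicate point I anticipate is the Penrose-identity bookkeeping in the proof of $\tens{M}\tens{A}\tens{M} = \tens{M}$ — keeping careful track of which $\tens{A}^{\dagger}\tens{A}$ and $\tens{F}^{\dagger}\tens{F}$ products may be collapsed and which may not; everything after that is a citation plus an elementary rank bound.
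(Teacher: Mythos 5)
Your proposal is correct and follows essentially the same route as the paper: a direct verification of the second Penrose condition $\tens{M}\tens{A}\tens{M}=\tens{M}$ using $\tens{A}^{\dagger}\tens{A}\tens{A}^{\dagger}=\tens{A}^{\dagger}$ and $\tens{F}^{\dagger}\tens{F}\tens{F}^{\dagger}=\tens{F}^{\dagger}$, followed by reading off $\image(\tens{M})=\hat{\spc{Y}}^{\perp}$ and $\ker(\tens{M})=\hat{\spc{X}}$ by definition, and deducing the rank inequality from $\image(\tens{M})\subseteq\image(\tens{A}^{\dagger})$. The only cosmetic difference is that the paper packages the algebra through the idempotents $\tens{Q}_1,\tens{Q}_2$ of \Cref{lem:Q12} (writing $\tens{M}=\tens{A}^{\dagger}\tens{Q}_1=\tens{Q}_2\tens{A}^{\dagger}$ and using $\tens{Q}_1^2=\tens{Q}_1$), whereas you expand the products by hand; your extra direct argument for $\tens{A}\hat{\spc{Y}}^{\perp}\oplus\hat{\spc{X}}=\mathbb{C}^n$ is a harmless bonus the paper defers to \Cref{rem:A-ypx}.
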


\begin{remark} \label{rem:inv12}
    Since \(\tens{M}\) is a \(\{2\}\)-inverse of \(\tens{A}\), we can mutually infer from \eqref{eq:penrose} that \(\tens{A}\) is a \(\{1\}\)-inverse of \(\tens{M}\). Furthermore, if \(\dim(\hat{\spc{Y}}) + \rank(\tens{A}) = m\), then \(\tens{A}\) and \(\tens{M}\) are the \(\{1, 2\}\)-inverse of each other \citep[p. 73, Corollary 10]{ISRAEL-2003}.
\end{remark}

\begin{remark} \label{rem:A-ypx}
    Recall that \eqref{eq:unique-XY} is the necessary and sufficient condition to uniquely determine a \(\{2\}\)-inverse of a matrix with a prescribed kernel and image. Such condition for \eqref{eq:MA2} becomes
        \begin{equation}
            \tens{A} \hat{\spc{Y}}^{\perp} \oplus \hat{\spc{X}} = \mathbb{C}^n, \label{eq:A-ypx}
        \end{equation}
        or equivalently, \(\hat{\spc{Y}} \oplus \tens{A}^{\ast} \hat{\spc{X}}^{\perp} = \mathbb{C}^m\) \citep[Equation 1.12.b]{CHEN-2000}. Note \(\hat{\spc{X}}\) and \(\hat{\spc{Y}}^{\perp}\) here are not prescribed, rather, defined by \(\tens{M}\), implying they should already satisfy \eqref{eq:A-ypx}.
\end{remark}

\begin{corollary} \label{cor:pinv}
    The Moore-Penrose inverse of \(\tens{M}\) is
    \begin{equation}
        \tens{M}^{\dagger} = \tens{P}_{\hat{\spc{X}}^{\perp}} \tens{A} \tens{P}_{\hat{\spc{Y}}^{\perp}}, \label{eq:M-pinv}
    \end{equation}
    where the orthogonal projection matrices \(\tens{P}_{\hat{\spc{X}}^{\perp}}\) and \(\tens{P}_{\hat{\spc{Y}}^{\perp}}\) are respectively given by
    \begin{subequations}
    \begin{align}
        \tens{P}_{\hat{\spc{X}}^{\perp}} &\coloneqq \tens{I} - \hat{\tens{X}} \left(\hat{\tens{X}}^{\ast} \hat{\tens{X}} \right)^{\dagger} \hat{\tens{X}}^{\ast}, \label{eq:Pxp} \\
        \tens{P}_{\hat{\spc{Y}}^{\perp}} &\coloneqq \tens{I} - \hat{\tens{Y}} \left(\hat{\tens{Y}}^{\ast} \hat{\tens{Y}} \right)^{\dagger} \hat{\tens{Y}}^{\ast}. \label{eq:Pyp}
    \end{align}
    \end{subequations}
\end{corollary}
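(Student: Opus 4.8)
The plan is to obtain \eqref{eq:M-pinv} not by a direct computation with the unwieldy definition \eqref{eq:M}, but by exploiting the structural fact --- available from \Cref{prop:MA} and \Cref{rem:inv12} --- that $\tens{A}$ is a $\{1\}$-inverse of $\tens{M}$, together with a general formula expressing the Moore--Penrose inverse of a matrix through one of its inner inverses. First I would establish the auxiliary identity
\[
    \tens{M}^{\dagger} = \tens{P}_{\image(\tens{M}^{\ast})}\, \tens{M}^{(1)}\, \tens{P}_{\image(\tens{M})},
\]
valid for every matrix $\tens{M}$ and every $\{1\}$-inverse $\tens{M}^{(1)}$ of $\tens{M}$.

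To prove this identity I would set $\tens{G}$ equal to its right-hand side and check the four Penrose conditions \eqref{eq:penrose} with $\tens{A}$ replaced by $\tens{M}$. The two key simplifications are $\tens{M}\tens{P}_{\image(\tens{M}^{\ast})} = \tens{M}$, which holds because $\image(\tens{M}^{\ast}) = \ker(\tens{M})^{\perp}$, and $\tens{P}_{\image(\tens{M})}\tens{M} = \tens{M}$. Combining the first with the fact that $\tens{M}\tens{M}^{(1)}$ is idempotent with range $\image(\tens{M})$ yields $\tens{M}\tens{G} = \tens{P}_{\image(\tens{M})}$, which is Hermitian; combining the second with the fact that $\tens{M}^{(1)}\tens{M}$ is idempotent with kernel $\ker(\tens{M})$ --- so that $\tens{I} - \tens{M}^{(1)}\tens{M}$ maps into $\ker(\tens{M}) = \image(\tens{M}^{\ast})^{\perp}$ --- yields $\tens{G}\tens{M} = \tens{P}_{\image(\tens{M}^{\ast})}$, also Hermitian. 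The remaining conditions $\tens{M}\tens{G}\tens{M} = \tens{M}$ and $\tens{G}\tens{M}\tens{G} = \tens{G}$ then follow from idempotency of these orthogonal projectors, and uniqueness of the Moore--Penrose inverse closes the argument.

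With the identity in hand, I would specialize $\tens{M}^{(1)} = \tens{A}$, which is permitted by \Cref{rem:inv12}. Since $\image(\tens{M}) = \hat{\spc{Y}}^{\perp}$ and $\image(\tens{M}^{\ast}) = \coimage(\tens{M}) = \hat{\spc{X}}^{\perp}$ (both recorded in \Cref{sec:main}), the identity becomes precisely \eqref{eq:M-pinv}. The expressions \eqref{eq:Pxp} and \eqref{eq:Pyp} are then just the standard realization of an orthogonal projector from a spanning matrix: since $\hat{\spc{X}} = \image(\hat{\tens{X}})$ we have $\tens{P}_{\hat{\spc{X}}} = \hat{\tens{X}}\hat{\tens{X}}^{\dagger} = \hat{\tens{X}}(\hat{\tens{X}}^{\ast}\hat{\tens{X}})^{\dagger}\hat{\tens{X}}^{\ast}$, whence $\tens{P}_{\hat{\spc{X}}^{\perp}} = \tens{I} - \tens{P}_{\hat{\spc{X}}}$, and identically for $\hat{\tens{Y}}$.

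I expect the only real work to be the bookkeeping in the middle step --- keeping straight which one-sided simplification is legitimate on which side, and checking that $\tens{I} - \tens{M}^{(1)}\tens{M}$ has its range inside $\ker(\tens{M})$ and not merely its kernel containing $\ker(\tens{M})$. An equivalent but more hands-on alternative, bypassing the auxiliary identity, would be to set $\tens{Z} \coloneqq \tens{P}_{\hat{\spc{X}}^{\perp}}\tens{A}\tens{P}_{\hat{\spc{Y}}^{\perp}}$ and verify directly that it is the $\{2\}$-inverse of $\tens{M}$ with image $\hat{\spc{X}}^{\perp}$ and kernel $\hat{\spc{Y}}$ (the uniqueness hypothesis for such an outer inverse being automatic here); in that route the delicate point is that $\tens{A}\tens{M}$ and $\tens{M}\tens{A}$ are only \emph{oblique} projectors, and it is the flanking orthogonal projectors, via \eqref{eq:A-ypx}, that correct them to orthogonal ones.
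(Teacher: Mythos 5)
Your proposal is correct, and it reaches \eqref{eq:M-pinv} by a genuinely different key lemma than the paper, although both arguments pivot on the same structural facts from \Cref{prop:MA} and \Cref{rem:inv12}. The paper's proof is essentially a one-line citation: it invokes the known representation \(\tens{A}^{(2)}_{\hat{\spc{Y}}^{\perp}, \hat{\spc{X}}} = (\tens{P}_{\hat{\spc{X}}^{\perp}} \tens{A} \tens{P}_{\hat{\spc{Y}}^{\perp}})^{\dagger}\) of a \(\{2\}\)-inverse with prescribed range and kernel \citep[p.~80]{ISRAEL-2003}, identifies the left-hand side with \(\tens{M}\) via \Cref{prop:MA}, and takes \((\cdot)^{\dagger}\) of both sides. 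You instead prove from scratch the (also classical) identity \(\tens{M}^{\dagger} = \tens{P}_{\image(\tens{M}^{\ast})}\, \tens{M}^{(1)}\, \tens{P}_{\image(\tens{M})}\), valid for an arbitrary \(\{1\}\)-inverse, by checking the four Penrose conditions, and then specialize \(\tens{M}^{(1)} = \tens{A}\) using \Cref{rem:inv12} together with the identifications \(\image(\tens{M}) = \hat{\spc{Y}}^{\perp}\) and \(\image(\tens{M}^{\ast}) = \coimage(\tens{M}) = \hat{\spc{X}}^{\perp}\). Your verification sketch is sound: \(\tens{M}\tens{P}_{\image(\tens{M}^{\ast})} = \tens{M}\) and \(\tens{P}_{\image(\tens{M})}\tens{M} = \tens{M}\), together with \(\image(\tens{I} - \tens{M}^{(1)}\tens{M}) = \ker(\tens{M}^{(1)}\tens{M}) = \ker(\tens{M})\), do give \(\tens{M}\tens{G} = \tens{P}_{\image(\tens{M})}\) and \(\tens{G}\tens{M} = \tens{P}_{\image(\tens{M}^{\ast})}\), from which all four conditions follow. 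What your route buys is self-containedness and independence from any existence condition of the form \eqref{eq:A-ypx} (only the \(\{1\}\)-inverse property of \(\tens{A}\) is used); what the paper's route buys is brevity. For the projector formulas \eqref{eq:Pxp}--\eqref{eq:Pyp} you also diverge: the paper re-derives them by specializing \eqref{eq:M} and \eqref{eq:M-P1} to \(\tens{A} = \tens{I}\), \(\tens{Y} = \tens{X}\), whereas you use the standard factorization \(\tens{P}_{\hat{\spc{X}}} = \hat{\tens{X}}\hat{\tens{X}}^{\dagger}\) with \(\hat{\tens{X}}^{\dagger} = (\hat{\tens{X}}^{\ast}\hat{\tens{X}})^{\dagger}\hat{\tens{X}}^{\ast}\); both are valid.
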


\Cref{cor:pinv} is closely related to the well-known formula of \citet[Theorem 3]{FILL-1999} for the pseudo-inverse of the sum of two singular matrices, \eg \(\tens{N}\) in \eqref{eq:N-with-B}, when either of \(\tens{A}\) or \(\tens{B}\) therein is singular. However, \Cref{cor:pinv} takes the opposite viewpoint and expresses the pseudo-inverse of \(\tens{M}\) instead, when \(\tens{N}\) in \eqref{eq:N-with-B} is undefined.

The matrix \(\tens{M}\), as a \(\{2\}\)-inverse, can be also expressed in other forms, such as by a full-rank representation using the bases of its image and coimage (see \eg \citep{SHENG-2007}, \citep{STANIMIROVIC-2012} and \citep[Section 5.1]{WANG-2018}). However, when \(\tens{A}\) is square and \(\ind(\tens{M}) = 1\), we can express \(\tens{M}\) directly by its kernel and cokernel, \ie matrices \(\hat{\tens{X}}\) and \(\hat{\tens{Y}}\), as follows.

\begin{lemma} \label{prop:ind-1}
    Suppose \(\tens{A} \in \mathcal{M}_{n, n}(\mathbb{C})\). It holds \(\hat{\spc{Y}}^{\perp} \oplus \hat{\spc{X}} = \mathbb{C}^n\) if and only if \(\ind(\tens{M}) = 1\).
\end{lemma}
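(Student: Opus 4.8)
The plan is to translate the index-one condition $\ind(\tens{M}) = 1$ into a direct-sum statement about $\image(\tens{M})$ and $\ker(\tens{M})$, then identify these spaces with $\hat{\spc{Y}}^{\perp}$ and $\hat{\spc{X}}$ by the notation fixed at the start of \Cref{sec:main}. Recall that for a square matrix $\tens{M} \in \mathcal{M}_{n,n}(\mathbb{C})$, it is a standard fact (e.g.\ via the core-nilpotent decomposition) that $\ind(\tens{M}) = 1$ if and only if $\image(\tens{M}) \oplus \ker(\tens{M}) = \mathbb{C}^n$, equivalently $\rank(\tens{M}^2) = \rank(\tens{M})$. Since by definition $\hat{\spc{X}} = \ker(\tens{M})$ and $\hat{\spc{Y}}^{\perp} = \image(\tens{M})$, this standard fact \emph{is} exactly the claimed equivalence $\hat{\spc{Y}}^{\perp} \oplus \hat{\spc{X}} = \mathbb{C}^n \iff \ind(\tens{M}) = 1$. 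So the substance of the proof is just to supply the argument for the standard fact, since $\tens{M}$ is square here (note $\tens{A} \in \mathcal{M}_{n,n}(\mathbb{C})$ forces $m = n$, so $\tens{M} \in \mathcal{M}_{n,n}(\mathbb{C})$ too).

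For the forward direction, suppose $\ind(\tens{M}) = 1$, i.e.\ $\rank(\tens{M}^2) = \rank(\tens{M})$. First I would show $\image(\tens{M}^2) = \image(\tens{M})$: the inclusion $\image(\tens{M}^2) \subseteq \image(\tens{M})$ is immediate, and equality of dimensions promotes it to equality of spaces. From $\image(\tens{M}^2) = \image(\tens{M})$ one gets that $\tens{M}$ restricted to $\image(\tens{M})$ is surjective onto $\image(\tens{M})$, hence (finite dimension) bijective, so $\image(\tens{M}) \cap \ker(\tens{M}) = \{\vect{0}\}$; combined with the rank--nullity count $\dim \image(\tens{M}) + \dim \ker(\tens{M}) = n$, this yields the direct sum $\image(\tens{M}) \oplus \ker(\tens{M}) = \mathbb{C}^n$. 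For the converse, suppose $\image(\tens{M}) \oplus \ker(\tens{M}) = \mathbb{C}^n$. Then the restriction of $\tens{M}$ to $\image(\tens{M})$ has trivial kernel (its kernel is $\image(\tens{M}) \cap \ker(\tens{M}) = \{\vect 0\}$), hence is injective, hence $\image(\tens{M}^2) = \tens{M}\big(\image(\tens{M})\big)$ has the same dimension as $\image(\tens{M})$, giving $\rank(\tens{M}^2) = \rank(\tens{M})$ and $\ind(\tens{M}) = 1$.

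I do not anticipate a serious obstacle; the only points requiring care are (i) confirming that squareness of $\tens{M}$ is exactly what makes $\image(\tens{M}) \oplus \ker(\tens{M})$ a decomposition of the \emph{same} space $\mathbb{C}^n$ (for rectangular $\tens{M}$ no such statement is meaningful, which is why the hypothesis $\tens{A} \in \mathcal{M}_{n,n}(\mathbb{C})$ is stated), and (ii) being explicit that the definition $\ind(\tens{A}) \coloneqq \min\{k \mid \rank(\tens{A}^{k+1}) = \rank(\tens{A}^k)\}$ makes $\ind(\tens{M}) = 1$ equivalent to $\rank(\tens{M}^2) = \rank(\tens{M})$ \emph{together with} $\tens{M} \neq \tens{0}$; if $\tens{M} = \tens{0}$ then $\ind(\tens{M}) = 0$ but also $\hat{\spc{Y}}^{\perp} \oplus \hat{\spc{X}} = \{\vect 0\} \oplus \mathbb{C}^n = \mathbb{C}^n$ holds, so the stated equivalence would technically fail in that edge case — I would either note that \Cref{prop:M-singular,prop:XY-share} imply $\tens{M}$ is nonzero in the situations of interest, or simply read $\ind(\tens{M}) = 1$ in the lemma in the inclusive sense $\ind(\tens{M}) \le 1$, which is the standard convention when speaking of "index-one" matrices and makes the biconditional clean.
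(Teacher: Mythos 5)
Your proof is correct and takes essentially the same route as the paper's: both reduce $\ind(\tens{M})=1$ to the condition $\rank(\tens{M}^2)=\rank(\tens{M})$, identify $\hat{\spc{Y}}^{\perp}=\image(\tens{M})$ and $\hat{\spc{X}}=\ker(\tens{M})$, and settle the equivalence by the standard observation that $\tens{M}$ restricted to its image is bijective exactly when image and kernel meet trivially; the paper phrases this by splitting $\hat{\spc{Y}}^{\perp}=\spc{V}\oplus\spc{V}'$ with $\spc{V}=\hat{\spc{Y}}^{\perp}\cap\hat{\spc{X}}$ and computing $\rank(\tens{M}^2)=\dim(\spc{V}')$, which is the same count.

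One correction to your edge-case remark: under the paper's definition $\ind(\tens{A})\coloneqq\min\{k\mid\rank(\tens{A}^{k+1})=\rank(\tens{A}^{k})\}$ with $\tens{A}^{0}=\tens{I}$, the zero matrix has index $1$, not $0$, since $\rank(\tens{M}^{1})=0\neq n=\rank(\tens{M}^{0})$; the index-$0$ matrices are precisely the \emph{nonsingular} ones. So the case in which the literal biconditional fails is $\tens{M}$ invertible (then $\hat{\spc{Y}}^{\perp}\oplus\hat{\spc{X}}=\mathbb{C}^{n}\oplus\{\vect{0}\}=\mathbb{C}^{n}$ yet $\ind(\tens{M})=0$), which can genuinely occur, e.g., $\tens{A}$ invertible with $\tens{X}=\tens{0}$, by the discussion after \Cref{prop:XY-share}. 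Your instinct to read the hypothesis as $\ind(\tens{M})\le 1$ is the right repair and is exactly what the paper's own proof does when it states that ``$\ind(\tens{M})\leq 1$ implies $\rank(\tens{M})=\rank(\tens{M}^2)$ and vice versa.''
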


\begin{remark} \label{rem:ind-1}
    A practical example for \(\ind(\tens{M}) = 1\) is when \(\tens{X} = \tens{Y}\) and \(\tens{A}\) is Hermitian, as \(\tens{M}\) also becomes Hermitian, which is known to have index one \citep[p. 159]{ISRAEL-2003}.
\end{remark}

If \(\hat{\spc{Y}}^{\perp}\) and \(\hat{\spc{X}}\) are complementary by \Cref{prop:ind-1}, the projection matrix \(\tens{P}_{\hat{\spc{Y}}^{\perp}, \hat{\spc{X}}}\) can be defined, which is required for the following statement.

\begin{theorem} \label{thm:M-rep}
    Suppose \(\tens{A} \in \mathcal{M}_{n,n}(\mathbb{C})\) and \(\ind(\tens{M}) = 1\). Define
    \begin{equation}
        \tens{N} \coloneqq \tens{P}_{\hat{\spc{X}}, \hat{\spc{Y}}^{\perp}} + \tens{A} \tens{P}_{\hat{\spc{Y}}^{\perp}, \hat{\spc{X}}}. \label{eq:def-N}
    \end{equation}
    Then, \(\tens{N}\) is non-singular and
    \begin{equation}
        \tens{M} = \tens{P}_{\hat{\spc{Y}}^{\perp}, \hat{\spc{X}}} \tens{N}^{-1}. \label{eq:M-rep}
    \end{equation}
    Also, \(\tens{P}_{\hat{\spc{Y}}^{\perp}, \hat{\spc{X}}}\) can be expressed by
    \begin{equation}
        \tens{P}_{\hat{\spc{Y}}^{\perp}, \hat{\spc{X}}} = \tens{I} - \hat{\tens{X}} \left( \hat{\tens{Y}}^{\ast} \hat{\tens{X}} \right)^{\dagger} \hat{\tens{Y}}^{\ast}. \label{eq:P_xy}
    \end{equation}
\end{theorem}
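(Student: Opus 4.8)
The plan is to exploit the characterization of $\tens{M}$ as the outer inverse $\tens{A}^{(2)}_{\hat{\spc{Y}}^{\perp},\hat{\spc{X}}}$ from \Cref{prop:MA}, together with the index-one hypothesis which, via \Cref{prop:ind-1}, guarantees $\hat{\spc{Y}}^{\perp}\oplus\hat{\spc{X}}=\mathbb{C}^n$ so that the oblique projector $\tens{P}_{\hat{\spc{Y}}^{\perp},\hat{\spc{X}}}$ is well defined. The structure of the argument mirrors the classical derivation of the Bott--Duffin inverse formula \eqref{eq:def-bott}: one shows that $\tens{N}$ in \eqref{eq:def-N} is a bijection of $\mathbb{C}^n$, then checks that $\tens{P}_{\hat{\spc{Y}}^{\perp},\hat{\spc{X}}}\tens{N}^{-1}$ satisfies the defining properties of the outer inverse with the prescribed image and kernel, and finally invokes uniqueness (the condition \eqref{eq:A-ypx} from \Cref{rem:A-ypx}, which holds by \Cref{prop:MA}).

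First I would establish that $\tens{N}$ is non-singular. Write $\tens{Q}\coloneqq\tens{P}_{\hat{\spc{Y}}^{\perp},\hat{\spc{X}}}$ and $\tens{I}-\tens{Q}=\tens{P}_{\hat{\spc{X}},\hat{\spc{Y}}^{\perp}}$, so $\tens{N}=(\tens{I}-\tens{Q})+\tens{A}\tens{Q}$. Suppose $\tens{N}\vect{v}=\vect{0}$. Then $\tens{A}\tens{Q}\vect{v}=-(\tens{I}-\tens{Q})\vect{v}$; the left side lies in $\image(\tens{A}\tens{Q})\subseteq\tens{A}\hat{\spc{Y}}^{\perp}=\tens{A}\,\image(\tens{M})$ while the right side lies in $\hat{\spc{X}}=\ker(\tens{M})$. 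By the outer-inverse relation $\tens{M}\tens{A}\tens{M}=\tens{M}$ we have $\tens{M}\tens{A}$ acting as the identity on $\image(\tens{M})=\hat{\spc{Y}}^{\perp}$, so $\tens{A}\hat{\spc{Y}}^{\perp}$ is complementary to $\hat{\spc{X}}$ — this is precisely \eqref{eq:A-ypx}. Hence both sides vanish: $(\tens{I}-\tens{Q})\vect{v}=\vect{0}$, i.e. $\vect{v}\in\hat{\spc{Y}}^{\perp}$, and $\tens{A}\tens{Q}\vect{v}=\tens{A}\vect{v}=\vect{0}$. But $\tens{A}$ restricted to $\hat{\spc{Y}}^{\perp}$ is injective (again by $\tens{M}\tens{A}\vect{v}=\vect{v}$ for $\vect{v}\in\image(\tens{M})$ forcing $\tens{A}\vect{v}\neq\vect{0}$ unless $\vect{v}=\vect{0}$), so $\vect{v}=\vect{0}$. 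This proves $\tens{N}$ is non-singular.

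Next I would verify \eqref{eq:M-rep}. Set $\tens{Z}\coloneqq\tens{Q}\tens{N}^{-1}$. Post-multiplying the identity $\tens{N}=(\tens{I}-\tens{Q})+\tens{A}\tens{Q}$ by $\tens{N}^{-1}$ and left-multiplying by $\tens{Q}$ gives $\tens{Q}=\tens{Q}\tens{A}\tens{Q}\tens{N}^{-1}=\tens{Q}\tens{A}\tens{Z}$, since $\tens{Q}(\tens{I}-\tens{Q})=\tens{0}$. Also $\tens{Q}\tens{Z}=\tens{Q}\tens{Q}\tens{N}^{-1}=\tens{Q}\tens{N}^{-1}=\tens{Z}$, so $\tens{Z}=\tens{Q}\tens{Z}=\tens{Q}\tens{A}\tens{Z}$ shows $\tens{Z}=\tens{Z}(\tens{A}\tens{Z})$ once we also note $\tens{Z}\tens{A}\tens{Z}=\tens{Q}\tens{N}^{-1}\tens{A}\tens{Q}\tens{N}^{-1}$; here I would use $\tens{N}^{-1}\tens{A}\tens{Q}=\tens{N}^{-1}(\tens{N}-(\tens{I}-\tens{Q}))=\tens{I}-\tens{N}^{-1}(\tens{I}-\tens{Q})$ and the fact that $\tens{Q}\tens{N}^{-1}(\tens{I}-\tens{Q})=\tens{0}$ (because $(\tens{I}-\tens{Q})$ maps into $\hat{\spc{X}}$, which $\tens{N}^{-1}$ sends back into $\hat{\spc{X}}$ as $\tens{N}$ preserves it, and then $\tens{Q}$ annihilates it), yielding $\tens{Z}\tens{A}\tens{Z}=\tens{Q}\tens{N}^{-1}=\tens{Z}$. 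Thus $\tens{Z}$ is a $\{2\}$-inverse of $\tens{A}$. Then I would compute $\image(\tens{Z})=\image(\tens{Q}\tens{N}^{-1})=\tens{Q}\,\mathbb{C}^n=\hat{\spc{Y}}^{\perp}$ (using $\tens{N}^{-1}$ bijective) and $\ker(\tens{Z})=\tens{N}\ker(\tens{Q})=\tens{N}\hat{\spc{X}}=\hat{\spc{X}}$ (since $\tens{N}$ acts as $\tens{I}$ on $\hat{\spc{X}}$). Hence $\tens{Z}=\tens{A}^{(2)}_{\hat{\spc{Y}}^{\perp},\hat{\spc{X}}}$, and by the uniqueness of the outer inverse with prescribed image and kernel under \eqref{eq:A-ypx}, $\tens{Z}=\tens{M}$. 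Finally, \eqref{eq:P_xy} follows from the general construction of the oblique projector onto $\hat{\spc{Y}}^{\perp}$ along $\hat{\spc{X}}$: since $\hat{\spc{X}}=\image(\hat{\tens{X}})$ and $\hat{\spc{Y}}^{\perp}=\ker(\tens{M})^{\perp}$ — wait, $\hat{\spc{Y}}^{\perp}=\image(\tens{M})$ and $\hat{\spc{Y}}=\image(\hat{\tens{Y}})$ — the complement projector $\tens{P}_{\hat{\spc{X}},\hat{\spc{Y}}^{\perp}}$ has range $\image(\hat{\tens{X}})$ and can be written $\hat{\tens{X}}(\hat{\tens{Y}}^{\ast}\hat{\tens{X}})^{\dagger}\hat{\tens{Y}}^{\ast}$ provided $\hat{\tens{Y}}^{\ast}\hat{\tens{X}}$ is invertible on the appropriate subspace, which holds because $\hat{\spc{X}}$ and $\hat{\spc{Y}}^{\perp}$ are complementary (equivalently $\hat{\spc{Y}}\oplus\hat{\spc{X}}^{\perp}=\mathbb{C}^n$, via $\dim\hat{\spc{X}}=\dim\hat{\spc{Y}}$); subtracting from $\tens{I}$ gives \eqref{eq:P_xy}.

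The main obstacle I anticipate is the careful bookkeeping of the various invariant-subspace claims: that $\tens{A}$ maps $\hat{\spc{Y}}^{\perp}$ bijectively onto a complement of $\hat{\spc{X}}$, that $\tens{N}$ preserves $\hat{\spc{X}}$ and acts invertibly on $\hat{\spc{Y}}^{\perp}$ "modulo" $\hat{\spc{X}}$, and in particular the cross-term vanishings $\tens{Q}\tens{N}^{-1}(\tens{I}-\tens{Q})=\tens{0}$ used to reduce $\tens{Z}\tens{A}\tens{Z}$ to $\tens{Z}$. These all ultimately rest on the single fact \eqref{eq:A-ypx} (guaranteed by \Cref{prop:MA} and the index-one hypothesis through \Cref{prop:ind-1}), but assembling them cleanly — rather than through index-chasing — is where the proof needs the most attention. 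An alternative, possibly cleaner route is to pass to a basis adapted to the decomposition $\mathbb{C}^n=\hat{\spc{Y}}^{\perp}\oplus\hat{\spc{X}}$, in which $\tens{M}$ is block-diagonal $\mathrm{diag}(\tens{M}_0,\tens{0})$ with $\tens{M}_0$ invertible, $\tens{A}$ has an invertible $(1,1)$ block equal to $\tens{M}_0^{-1}$ (possibly with nonzero off-diagonal blocks), and $\tens{Q}=\mathrm{diag}(\tens{I},\tens{0})$; then $\tens{N}=\mathrm{diag}(\tens{M}_0^{-1},\tens{I})$ plus strictly lower-triangular blocks, which is manifestly invertible, and \eqref{eq:M-rep} becomes an immediate block computation.
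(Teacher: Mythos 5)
Your proposal is correct, but the central step takes a genuinely different route from the paper's. For the non-singularity of \(\tens{N}\) your argument coincides with the paper's auxiliary \Cref{prop:N-invertible}: split \(\tens{N}\vect{v}=\vect{0}\) across the complementary pair \(\hat{\spc{X}}\) and \(\tens{A}\hat{\spc{Y}}^{\perp}\), then kill the residual \(\vect{v}\in\hat{\spc{Y}}^{\perp}\cap\ker(\tens{A})\) --- though you do the last step via ``\(\tens{M}\tens{A}\) acts as the identity on \(\image(\tens{M})\)'' rather than via \(\hat{\spc{Y}}^{\perp}\subseteq\image(\tens{A}^{\ast})\perp\ker(\tens{A})\); both are fine. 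For \eqref{eq:M-rep}, the paper proves \(\tens{T}\coloneqq\tens{M}\tens{N}-\tens{P}_{\hat{\spc{Y}}^{\perp},\hat{\spc{X}}}=\tens{0}\) head-on: it first shows \(\tens{A}\tens{T}=\tens{0}\) using the identity \(\tens{A}\tens{A}^{(2)}_{\hat{\spc{Y}}^{\perp},\hat{\spc{X}}}=\tens{P}_{\tens{A}\hat{\spc{Y}}^{\perp},\hat{\spc{X}}}\), and then deduces \(\tens{T}=\tens{0}\) from \(\image(\tens{T})\subseteq\hat{\spc{Y}}^{\perp}\perp\ker(\tens{A})\) (with a separate degenerate case for \(\tens{A}=\tens{0}\)). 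You instead verify that \(\tens{Z}=\tens{Q}\tens{N}^{-1}\) is a \(\{2\}\)-inverse of \(\tens{A}\) with image \(\hat{\spc{Y}}^{\perp}\) and kernel \(\hat{\spc{X}}\), and conclude by the uniqueness of the outer inverse with prescribed image and kernel under \eqref{eq:A-ypx}; your key cancellation \(\tens{Q}\tens{N}^{-1}(\tens{I}-\tens{Q})=\tens{0}\) is correctly justified from \(\tens{N}\) fixing \(\hat{\spc{X}}\) pointwise. Your route trades the paper's ``\(\tens{A}\tens{T}=\tens{0}\Rightarrow\tens{T}=\tens{0}\)'' argument for a citation of the uniqueness theorem, which is arguably cleaner and more modular. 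Finally, for \eqref{eq:P_xy} the paper uses a specialization trick --- set \(\tens{A}=\tens{I}\) in \eqref{eq:M-rep} and compare with \eqref{eq:M} --- whereas you construct the oblique projector directly. Your version is sound but should spell out that \(\ker(\hat{\tens{Y}}^{\ast}\hat{\tens{X}})=\ker(\hat{\tens{X}})\), which follows from \(\hat{\spc{Y}}^{\perp}\cap\hat{\spc{X}}=\{\vect{0}\}\) exactly as in \Cref{lem:F} with \(\tens{A}=\tens{I}\), so that \(\hat{\tens{X}}(\hat{\tens{Y}}^{\ast}\hat{\tens{X}})^{\dagger}\hat{\tens{Y}}^{\ast}\) is idempotent with image all of \(\hat{\spc{X}}\) and kernel \(\hat{\spc{Y}}^{\perp}\); this is a routine fill-in, not a gap.
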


\begin{remark} \label{rem:bott-duffin}
    When \(\hat{\spc{X}} = \hat{\spc{Y}}\), we recognize from \eqref{eq:def-bott} and \eqref{eq:outer-bott} that the identity \eqref{eq:M-rep} is the expression for the Bott-Duffin inverse as (see also \cite{denG-2007})
    \begin{equation}
        \tens{M} = \tens{A}^{(2)}_{\hat{\spc{X}}^{\perp}, \hat{\spc{X}}} = \tens{A}^{(-1)}_{(\hat{\spc{X}}^{\perp})} = \tens{P}_{\hat{\spc{X}}^{\perp}} \left( \tens{P}_{\hat{\spc{X}}} + \tens{A} \tens{P}_{\hat{\spc{X}}^{\perp}} \right)^{-1}.
    \end{equation}
\end{remark}

Observe \(\tens{N} \tens{P}_{\hat{\spc{X}}} = \tens{P}_{\hat{\spc{X}}}\) and \(\tens{N} \tens{P}_{\hat{\spc{Y}}^{\perp}} = \tens{A} \tens{P}_{\hat{\spc{Y}}^{\perp}}\), so we can view \(\tens{N}\) by its restrictions on the complementary subspaces \(\hat{\spc{X}}\) and \(\hat{\spc{Y}}^{\perp}\) as
\begin{equation}
    \left. \tens{N} \right|_{\hat{\spc{X}}} = \left. \tens{I} \right|_{\hat{\spc{X}}},
    \quad \text{and} \quad
    \left. \tens{N} \right|_{\hat{\spc{Y}}^{\perp}} = \left. \tens{A} \right|_{\hat{\spc{Y}}^{\perp}}.
            \sublabel{eq:N-rest}{a,b}
\end{equation}
Furthermore, we can interpret the relation between \(\tens{N}\) and \(\tens{M}\) in \Cref{thm:M-rep} by using their \emph{compression} (in the sense of \citet[p. 120]{HALMOS-1982}) on \(\hat{\spc{X}}^{\perp}\), defined by \(\tens{P}_{\hat{\spc{X}}^{\perp}} \left. \tens{N} \right|_{\hat{\spc{X}}^\perp}\) and \(\tens{P}_{\hat{\spc{X}}^{\perp}} \left. \tens{M} \right|_{\hat{\spc{X}}^\perp}\), respectively. We show these two maps on \(\hat{\spc{X}}^{\perp} \to \hat{\spc{X}}^{\perp}\) are inverse of each other.

\begin{corollary} \label{cor:comp}
    Suppose \(\tens{N}\) and \(\tens{M}\) are as in \Cref{thm:M-rep}. Let \(\tens{U}_{\hat{\spc{X}}^{\perp}}\) be a matrix with orthonormal columns forming a basis on \(\hat{\spc{X}}^{\perp}\). Denote the compressions of \(\tens{N}\) and \(\tens{M}\) onto \(\hat{\spc{X}}^{\perp}\) that are represented by their coordinates on the basis \(\tens{U}_{\hat{\spc{X}}^{\perp}}\) respectively by
    \begin{equation}
        \tens{N}_{\hat{\spc{X}}^{\perp}} \coloneqq \tens{U}_{\hat{\spc{X}}^{\perp}}^{\ast} \tens{N} \tens{U}_{\hat{\spc{X}}^{\perp}},
        \quad and  \quad
        \tens{M}_{\hat{\spc{X}}^{\perp}} \coloneqq \tens{U}_{\hat{\spc{X}}^{\perp}}^{\ast} \tens{M} \tens{U}_{\hat{\spc{X}}^{\perp}}.
    \end{equation}
    Then, \(\tens{N}_{\hat{\spc{X}}^{\perp}}\) and \(\tens{M}_{\hat{\spc{X}}^{\perp}}\) are non-singular and
\begin{equation}
    \tens{M}_{\hat{\spc{X}}^{\perp}} = \left(\tens{N}_{\hat{\spc{X}}^{\perp}} \right)^{-1}. \label{eq:MNI}
\end{equation}
\end{corollary}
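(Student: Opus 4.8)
The plan is to reduce everything to verifying the single identity $\tens{M}_{\hat{\spc{X}}^{\perp}} \tens{N}_{\hat{\spc{X}}^{\perp}} = \tens{I}$. Since $\hat{\spc{X}} = \ker(\tens{M})$, the compressions $\tens{N}_{\hat{\spc{X}}^{\perp}} = \tens{U}^{\ast} \tens{N} \tens{U}$ and $\tens{M}_{\hat{\spc{X}}^{\perp}} = \tens{U}^{\ast} \tens{M} \tens{U}$ are both square matrices of the same size $\dim(\hat{\spc{X}}^{\perp}) = \rank(\tens{M})$ (here $\tens{U} \coloneqq \tens{U}_{\hat{\spc{X}}^{\perp}}$ and, as $\tens{A}$ is square, $\tens{M} \in \mathcal{M}_{n,n}(\mathbb{C})$). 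Hence a one-sided inverse is automatically two-sided, so proving $\tens{M}_{\hat{\spc{X}}^{\perp}} \tens{N}_{\hat{\spc{X}}^{\perp}} = \tens{I}$ at once yields non-singularity of both and \eqref{eq:MNI}. Two standing facts about $\tens{U}$ will be used repeatedly: $\tens{U}^{\ast} \tens{U} = \tens{I}$ and $\tens{U} \tens{U}^{\ast} = \tens{P}_{\hat{\spc{X}}^{\perp}}$, and $\ker(\tens{U}^{\ast}) = \image(\tens{U})^{\perp} = \hat{\spc{X}}$. By \Cref{prop:ind-1} the hypothesis $\ind(\tens{M}) = 1$ gives $\hat{\spc{Y}}^{\perp} \oplus \hat{\spc{X}} = \mathbb{C}^n$, so the oblique projectors $\tens{P}_{\hat{\spc{Y}}^{\perp}, \hat{\spc{X}}}$ and its complement $\tens{P}_{\hat{\spc{X}}, \hat{\spc{Y}}^{\perp}} = \tens{I} - \tens{P}_{\hat{\spc{Y}}^{\perp}, \hat{\spc{X}}}$ are well-defined and \Cref{thm:M-rep} is in force.

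The next step is to record two elementary absorption identities. First, since $\image(\tens{P}_{\hat{\spc{X}}}) = \hat{\spc{X}} = \ker(\tens{M})$ we have $\tens{M} \tens{P}_{\hat{\spc{X}}} = \tens{0}$, and therefore $\tens{M} \tens{P}_{\hat{\spc{X}}^{\perp}} = \tens{M}$. Second, since $\image(\tens{P}_{\hat{\spc{X}}, \hat{\spc{Y}}^{\perp}}) = \hat{\spc{X}} = \ker(\tens{U}^{\ast})$ we have $\tens{U}^{\ast} \tens{P}_{\hat{\spc{X}}, \hat{\spc{Y}}^{\perp}} = \tens{0}$, and therefore $\tens{U}^{\ast} \tens{P}_{\hat{\spc{Y}}^{\perp}, \hat{\spc{X}}} = \tens{U}^{\ast}$. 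Chaining these together, and using $\tens{U}\tens{U}^{\ast} = \tens{P}_{\hat{\spc{X}}^{\perp}}$ in the middle,
\[
    \tens{M}_{\hat{\spc{X}}^{\perp}} \tens{N}_{\hat{\spc{X}}^{\perp}}
    = \tens{U}^{\ast} \tens{M} \tens{U} \tens{U}^{\ast} \tens{N} \tens{U}
    = \tens{U}^{\ast} \tens{M} \tens{P}_{\hat{\spc{X}}^{\perp}} \tens{N} \tens{U}
    = \tens{U}^{\ast} \tens{M} \tens{N} \tens{U}.
\]
Finally, right-multiplying \eqref{eq:M-rep} by the non-singular $\tens{N}$ gives $\tens{M} \tens{N} = \tens{P}_{\hat{\spc{Y}}^{\perp}, \hat{\spc{X}}}$, so the displayed quantity equals $\tens{U}^{\ast} \tens{P}_{\hat{\spc{Y}}^{\perp}, \hat{\spc{X}}} \tens{U} = \tens{U}^{\ast} \tens{U} = \tens{I}$, which closes the argument.

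The computation is short and the only place that demands care is bookkeeping of the oblique versus orthogonal projectors: it is crucial that it is the \emph{kernel} component $\hat{\spc{X}}$ of $\tens{P}_{\hat{\spc{Y}}^{\perp}, \hat{\spc{X}}}$, and not its range, that coincides with $\ker(\tens{U}^{\ast})$, and dually that $\hat{\spc{X}}$ is the kernel (not the range) of $\tens{M}$, so that $\tens{P}_{\hat{\spc{X}}^{\perp}}$ may be absorbed on the right of $\tens{M}$ but not on the left. Beyond this I do not anticipate any obstacle; note in particular that we never need the companion identity $\tens{N}_{\hat{\spc{X}}^{\perp}} \tens{M}_{\hat{\spc{X}}^{\perp}} = \tens{I}$ directly, as it follows for free from squareness.
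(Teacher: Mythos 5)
Your proposal is correct and follows essentially the same route as the paper: both proofs rest on right-multiplying \eqref{eq:M-rep} to get \(\tens{M}\tens{N} = \tens{P}_{\hat{\spc{Y}}^{\perp}, \hat{\spc{X}}}\) and then absorbing \(\tens{P}_{\hat{\spc{X}}^{\perp}} = \tens{U}_{\hat{\spc{X}}^{\perp}}\tens{U}_{\hat{\spc{X}}^{\perp}}^{\ast}\) using the identities \(\tens{M}\tens{P}_{\hat{\spc{X}}^{\perp}} = \tens{M}\) and \(\tens{U}_{\hat{\spc{X}}^{\perp}}^{\ast}\tens{P}_{\hat{\spc{Y}}^{\perp}, \hat{\spc{X}}} = \tens{U}_{\hat{\spc{X}}^{\perp}}^{\ast}\). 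Your only (minor) departure is in deducing non-singularity directly from \(\tens{M}_{\hat{\spc{X}}^{\perp}}\tens{N}_{\hat{\spc{X}}^{\perp}} = \tens{I}\) together with equality of sizes, which is a slightly cleaner justification than the paper's separate arguments for each factor.
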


Representing the inverse of \(\tens{M}\) on a subspace where it is non-singular, as in \eqref{eq:MNI}, is reminiscent of the Woodbury identity which finds the inverse of the non-singular form of the matrix \(\tens{M}\).


\subsection{Pseudo-Determinant Identities} \label{sec:det}

Our objective in this section is to find the pseudo-determinant of \(\tens{M}\), denoted by \(\pdet(\tens{M})\) or \(\vert \tens{M} \vert_{\dagger}\). The pseudo-determinant of a square matrix is the product of its non-zero eigenvalues with the convention that the pseudo-determinant of a nilpotent matrix is \(1\). We refer the reader to \citep{KNILL-2014} for the properties and representations of pseudo-determinants and to \citep{ZHANG-2002, SHENG-2007} for the determinantal representation of generalized inverses.

In this section, we require \(\tens{A}\), and hence \(\tens{M}\), to be square matrices for their pseudo-determinant to be defined. It implies from the rank-nullity theorem that
    \begin{equation}
        \dim(\hat{\spc{X}}) = \dim(\hat{\spc{Y}}). \label{eq:dim-xy}
    \end{equation}
We also assume \(\tens{A}\) is EP, which grants us two properties. First, the core-nilpotent decomposition of \(\tens{A}\) becomes (see \citep[Equation 5.11.15]{MEYER-2001} or \citep[Proposition 3]{BAJO-2021})
\begin{equation}
    \tens{A} = \tens{U} \begin{bmatrix} \tilde{\tens{A}} & \tens{0} \\ \tens{0} & \tens{0} \end{bmatrix} \tens{U}^{\ast} = \tens{U}_{\spc{A}} \tilde{\tens{A}} \tens{U}_{\spc{A}}^{\ast}, \label{eq:A-tilde}
\end{equation}
where \(\tilde{\tens{A}} \in \mathcal{M}_{r, r}(\mathbb{C})\) is non-singular with \(r \coloneqq \rank(\tens{A})\), and \(\tens{U} \in \mathcal{M}_{n, n}(\mathbb{C})\) is unitary. Also, \(\spc{A} \coloneqq \image(\tens{A})\), and we set \(\tens{U} = [\tens{U}_{\spc{A}}, \tens{U}_{\spc{A}^{\perp}}]\), where \(\tens{U}_{\spc{A}}\) consists of the first \(r\) columns of \(\tens{U}\) and \(\tens{U}_{\spc{A}^{\perp}}\) consists of the rest of the columns of \(\tens{U}\). We have \(\image(\tens{U}_{\spc{A}}) = \spc{A}\) and \(\image(\tens{U}_{\spc{A}^{\perp}}) = \spc{A}^{\perp}\). 

The second consequence of the EP property of \(\tens{A}\) is related to \(\hat{\spc{Y}}\) as follows. Define
\begin{equation}
    \hat{\spc{X}}_{\spc{A}} = \hat{\spc{X}} \cap \spc{A}, \quad \text{and} \quad \hat{\spc{Y}}_{\spc{A}} = \hat{\spc{Y}} \cap \spc{A}. \label{eq:def-XA}
\end{equation}

\begin{lemma} \label{lem:XYAplus}
    It holds that \(\hat{\spc{X}}^{\perp} \subseteq \spc{A}\), and
    \begin{equation}
        \hat{\spc{X}} = \hat{\spc{X}}_{\spc{A}} \oplus \spc{A}^{\perp},
        \quad \text{and} \quad
        \spc{A} = \hat{\spc{X}}_{\spc{A}} \oplus \hat{\spc{X}}^{\perp}. 
        \sublabel{eq:XAplus}{a,b}
    \end{equation}
    Furthermore, if \(\tens{A}\) is EP, then \(\hat{\spc{Y}}^{\perp} \subseteq \spc{A}\), and
    \begin{equation}
        \hat{\spc{Y}} = \hat{\spc{Y}}_{\spc{A}} \oplus \spc{A}^{\perp},
        \quad \text{and} \quad
        \spc{A} = \hat{\spc{Y}}_{\spc{A}} \oplus \hat{\spc{Y}}^{\perp}.
        \sublabel{eq:YAplus}{a,b}
    \end{equation}
The above also imply \(\spc{A}^{\perp} \subseteq \hat{\spc{X}} \cap \hat{\spc{Y}}\) and \(\dim(\hat{\spc{X}}_{\spc{A}}) = \dim(\hat{\spc{Y}}_{\spc{A}})\).
\end{lemma}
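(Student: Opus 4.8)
The plan is to deduce all of the lemma's claims from one elementary subspace fact, after establishing the correct range and kernel inclusions for \(\tens{M}\). \textbf{Step 1: inclusions from the factored forms of \(\tens{M}\).} Pulling \(\tens{A}^{\dagger}\) out on the right, \(\tens{M} = \bigl(\tens{I} - \tens{A}^{\dagger}\tens{X}(\tens{Y}^{\ast}\tens{A}^{\dagger}\tens{X})^{\dagger}\tens{Y}^{\ast}\bigr)\tens{A}^{\dagger}\), so \(\ker(\tens{M}) \supseteq \ker(\tens{A}^{\dagger}) = \ker(\tens{A}^{\ast}) = \spc{A}^{\perp}\), equivalently \(\hat{\spc{X}}^{\perp} \subseteq \spc{A}\). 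Pulling \(\tens{A}^{\dagger}\) out on the left, \(\tens{M} = \tens{A}^{\dagger}\bigl(\tens{I} - \tens{X}(\tens{Y}^{\ast}\tens{A}^{\dagger}\tens{X})^{\dagger}\tens{Y}^{\ast}\tens{A}^{\dagger}\bigr)\), so \(\image(\tens{M}) \subseteq \image(\tens{A}^{\dagger}) = \image(\tens{A}^{\ast})\); when \(\tens{A}\) is EP this last space equals \(\image(\tens{A}) = \spc{A}\), whence \(\hat{\spc{Y}}^{\perp} = \image(\tens{M}) \subseteq \spc{A}\). Here I use \(\ker(\tens{A}^{\dagger}) = \ker(\tens{A}^{\ast})\) and \(\image(\tens{A}^{\dagger}) = \image(\tens{A}^{\ast})\) (recorded in \Cref{sec:notation}) together with \(\ker(\tens{A}^{\ast}) = \image(\tens{A})^{\perp} = \spc{A}^{\perp}\).

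\textbf{Step 2: a decomposition lemma.} I will show once that if \(\spc{W} \subseteq \mathbb{C}^n\) is a subspace with \(\spc{A}^{\perp} \subseteq \spc{W}\) — equivalently \(\spc{W}^{\perp} \subseteq \spc{A}\) — then \(\spc{W} = (\spc{W} \cap \spc{A}) \oplus \spc{A}^{\perp}\) and \(\spc{A} = (\spc{W} \cap \spc{A}) \oplus \spc{W}^{\perp}\). For the first, the sum is direct since \((\spc{W} \cap \spc{A}) \cap \spc{A}^{\perp} \subseteq \spc{A} \cap \spc{A}^{\perp} = \{\vect{0}\}\), and it exhausts \(\spc{W}\) because any \(\vect{v} \in \spc{W}\), split as \(\vect{v} = \vect{v}_1 + \vect{v}_2\) along \(\mathbb{C}^n = \spc{A} \oplus \spc{A}^{\perp}\), has \(\vect{v}_2 \in \spc{A}^{\perp} \subseteq \spc{W}\) and hence \(\vect{v}_1 = \vect{v} - \vect{v}_2 \in \spc{W} \cap \spc{A}\). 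For the second, both summands lie in \(\spc{A}\) by the hypothesis, the sum is direct since \((\spc{W} \cap \spc{A}) \cap \spc{W}^{\perp} \subseteq \spc{W} \cap \spc{W}^{\perp} = \{\vect{0}\}\), and the count \(\dim(\spc{W} \cap \spc{A}) + \dim(\spc{W}^{\perp}) = (\dim \spc{W} - \dim \spc{A}^{\perp}) + (n - \dim \spc{W}) = \dim \spc{A}\), using the first decomposition, shows it fills \(\spc{A}\).

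\textbf{Step 3: assembly, and the main obstacle.} Applying Step 2 with \(\spc{W} = \hat{\spc{X}}\), legitimate by Step 1, yields the two identities for \(\hat{\spc{X}}\); when \(\tens{A}\) is EP, Step 1 makes \(\spc{W} = \hat{\spc{Y}}\) admissible and Step 2 yields the two identities for \(\hat{\spc{Y}}\). Since \(\spc{A}^{\perp} \subseteq \hat{\spc{X}}\) and \(\spc{A}^{\perp} \subseteq \hat{\spc{Y}}\) we obtain \(\spc{A}^{\perp} \subseteq \hat{\spc{X}} \cap \hat{\spc{Y}}\); and from \(\spc{A} = \hat{\spc{X}}_{\spc{A}} \oplus \hat{\spc{X}}^{\perp} = \hat{\spc{Y}}_{\spc{A}} \oplus \hat{\spc{Y}}^{\perp}\) together with \(\dim(\hat{\spc{X}}^{\perp}) = \dim(\hat{\spc{Y}}^{\perp}) = \rank(\tens{M})\) (noted in \Cref{sec:main}) we get \(\dim(\hat{\spc{X}}_{\spc{A}}) = \dim(\hat{\spc{Y}}_{\spc{A}})\). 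The only step needing genuine care is Step 1: one must notice that \(\hat{\spc{X}}^{\perp} \subseteq \spc{A}\) holds for arbitrary \(\tens{A}\) whereas \(\hat{\spc{Y}}^{\perp} \subseteq \spc{A}\) genuinely uses \(\image(\tens{A}^{\ast}) = \image(\tens{A})\), i.e. the EP property, and one must choose the appropriate one-sided factorization of \(\tens{M}\) for each inclusion; Steps 2 and 3 are routine linear algebra.
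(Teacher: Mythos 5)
Your proof is correct and follows essentially the same route as the paper's: the inclusions \(\hat{\spc{X}}^{\perp} \subseteq \spc{A}\) and (under EP) \(\hat{\spc{Y}}^{\perp} \subseteq \spc{A}\) are derived from the same two factorizations \(\tens{M} = \tens{Q}_2\tens{A}^{\dagger} = \tens{A}^{\dagger}\tens{Q}_1\), and the decompositions then follow from the same elementary splitting of vectors along \(\mathbb{C}^n = \spc{A} \oplus \spc{A}^{\perp}\). The only differences are organizational: you package the splitting as a reusable subspace lemma and prove the second decomposition by a dimension count, whereas the paper argues part (a) inline and obtains part (b) by noting the hypothesis is symmetric under swapping \(\hat{\spc{X}}\) and \(\spc{A}\); both are sound.
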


By the above properties and \Cref{cor:pinv}, we can obtain the pseudo-determinant of \(\tens{M}\) as follows.

\begin{proposition} \label{prop:pMp}
    Suppose \(\tens{A}\) is EP and \(\ind(\tens{M}) = 1\). Let \(\tens{U}_{\hat{\spc{X}}_{\spc{A}}}\), \(\tens{U}_{\hat{\spc{Y}}_{\spc{A}}}\), \(\tens{U}_{\hat{\spc{X}}^{\perp}}\), \(\tens{U}_{\hat{\spc{Y}}^{\perp}}\), and \(\tens{U}_{\spc{A}^{\perp}}\) be matrices with orthonormal columns as bases for \(\hat{\spc{X}}_{\spc{A}}\), \(\hat{\spc{Y}}_{\spc{A}}\), \(\hat{\spc{X}}^{\perp}\), \(\hat{\spc{Y}}^{\perp}\), and \(\spc{A}^{\perp}\), respectively. Then,
    \begin{equation}
        \vert \tens{M} \vert_{\dagger} = \frac{\vert \tens{U}_{\hat{\spc{X}}^{\perp}}^{\ast} \tens{U}_{\hat{\spc{Y}}^{\perp}} \vert}{ \vert \tens{U}_{\hat{\spc{X}}^{\perp}}^{\ast} \tens{A} \tens{U}_{\hat{\spc{Y}}^{\perp}} \vert} =
        \frac{
        \vert \tens{U}_{\hat{\spc{Y}}_{\spc{A}}}^{\ast} \tens{U}_{\hat{\spc{X}}_{\spc{A}}} \vert}
        {\vert \tens{A} \vert_{\dagger} \, \vert \tens{U}_{\hat{\spc{Y}}_{\spc{A}}}^{\ast} \tens{A}^{\dagger} \tens{U}_{\hat{\spc{X}}_{\spc{A}}} \vert}, \label{eq:pinv-M-p}
    \end{equation}
    and the matrices in the above determinants, \(\vert \cdot \vert\), are non-singular.
\end{proposition}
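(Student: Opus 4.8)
The plan is to express $\pdet(\tens{M})$ as the determinant of a compression of $\tens{M}$, turn that into the first ratio in \eqref{eq:pinv-M-p} using the pseudo‑inverse formula \eqref{eq:M-pinv}, and then convert the first ratio into the second by a complementary‑minor (Jacobi) identity carried out inside the invertible core $\tilde{\tens{A}}$ of $\tens{A}$. For the first step, recall from \Cref{prop:MA} that $\ker(\tens{M}) = \hat{\spc{X}}$, $\image(\tens{M}) = \hat{\spc{Y}}^{\perp}$, $\coimage(\tens{M}) = \hat{\spc{X}}^{\perp}$, and that $\ind(\tens{M}) = 1$ gives $\hat{\spc{Y}}^{\perp} \oplus \hat{\spc{X}} = \mathbb{C}^n$ by \Cref{prop:ind-1}. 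Thus $\tens{M}$ is an automorphism of $\image(\tens{M})$ and vanishes on a complement, so $\pdet(\tens{M}) = \det(\tens{U}_{\hat{\spc{Y}}^{\perp}}^{\ast} \tens{M}\, \tens{U}_{\hat{\spc{Y}}^{\perp}})$. Writing $\tens{U}_1 \coloneqq \tens{U}_{\hat{\spc{Y}}^{\perp}}$ and $\tens{V}_1 \coloneqq \tens{U}_{\hat{\spc{X}}^{\perp}}$ and using $\tens{M}\tens{P}_{\hat{\spc{X}}} = \tens{0}$ and $\tens{P}_{\hat{\spc{Y}}^{\perp}}\tens{M} = \tens{M}$, one gets $\tens{M}\tens{U}_1 = \tens{M}\tens{V}_1(\tens{V}_1^{\ast}\tens{U}_1)$ and $\tens{V}_1^{\ast}\tens{M}\tens{V}_1 = (\tens{V}_1^{\ast}\tens{U}_1)(\tens{U}_1^{\ast}\tens{M}\tens{V}_1)$; taking determinants (and $\det(\tens{P}\tens{Q}) = \det(\tens{Q}\tens{P})$ for square factors) shows the compression of $\tens{M}$ onto $\hat{\spc{X}}^{\perp}$ reproduces the same number, i.e. $\pdet(\tens{M}) = \det(\tens{V}_1^{\ast}\tens{M}\tens{V}_1)$ as well. (When $\rank(\tens{M}) = 0$ everything is a $0 \times 0$ determinant and the claim is vacuous.)

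For the first equality, \eqref{eq:M-pinv} gives $\tens{M}^{\dagger} = \tens{P}_{\hat{\spc{X}}^{\perp}}\tens{A}\tens{P}_{\hat{\spc{Y}}^{\perp}}$; since $\tens{P}_{\hat{\spc{X}}^{\perp}}\tens{V}_1 = \tens{V}_1$ and $\tens{P}_{\hat{\spc{Y}}^{\perp}}\tens{U}_1 = \tens{U}_1$, this yields $\tens{V}_1^{\ast}\tens{A}\tens{U}_1 = \tens{V}_1^{\ast}\tens{M}^{\dagger}\tens{U}_1$. Combining $\tens{M}^{\dagger}\tens{M} = \tens{P}_{\coimage(\tens{M})} = \tens{P}_{\hat{\spc{X}}^{\perp}}$ with $\tens{M} = \tens{U}_1\tens{U}_1^{\ast}\tens{M}$ shows $(\tens{V}_1^{\ast}\tens{M}^{\dagger}\tens{U}_1)(\tens{U}_1^{\ast}\tens{M}\tens{V}_1) = \tens{V}_1^{\ast}\tens{V}_1 = \tens{I}$, so $\tens{V}_1^{\ast}\tens{A}\tens{U}_1$ is non-singular with inverse $\tens{U}_1^{\ast}\tens{M}\tens{V}_1$; hence
\[
\frac{\det(\tens{V}_1^{\ast}\tens{U}_1)}{\det(\tens{V}_1^{\ast}\tens{A}\tens{U}_1)} = \det\bigl((\tens{V}_1^{\ast}\tens{U}_1)(\tens{U}_1^{\ast}\tens{M}\tens{V}_1)\bigr) = \det(\tens{V}_1^{\ast}\tens{M}\tens{V}_1) = \pdet(\tens{M}),
\]
the last step by the previous paragraph.

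The second equality is where the work lies, and where the EP hypothesis is used. By \Cref{lem:XYAplus}, $\hat{\spc{X}}^{\perp}\subseteq\spc{A}$, $\spc{A} = \hat{\spc{X}}_{\spc{A}}\oplus\hat{\spc{X}}^{\perp}$, and—because $\tens{A}$ is EP—also $\hat{\spc{Y}}^{\perp}\subseteq\spc{A}$, $\spc{A} = \hat{\spc{Y}}_{\spc{A}}\oplus\hat{\spc{Y}}^{\perp}$, with $\dim(\hat{\spc{X}}_{\spc{A}}) = \dim(\hat{\spc{Y}}_{\spc{A}}) =: \mu$ and $\mu + \rank(\tens{M}) = r \coloneqq \rank(\tens{A})$. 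Since $\hat{\spc{X}}_{\spc{A}}\subseteq\hat{\spc{X}} = (\hat{\spc{X}}^{\perp})^{\perp}$ these are orthogonal direct sums, so $\tens{U}_{\spc{A}} \coloneqq [\,\tens{U}_{\hat{\spc{X}}_{\spc{A}}}\ \ \tens{V}_1\,]$ and $\tens{W}_{\spc{A}} \coloneqq [\,\tens{U}_{\hat{\spc{Y}}_{\spc{A}}}\ \ \tens{U}_1\,]$ are orthonormal bases of $\spc{A}$. I would take $\tens{U}_{\spc{A}}$ as the unitary frame in \eqref{eq:A-tilde}, so that $\tilde{\tens{A}} = \tens{U}_{\spc{A}}^{\ast}\tens{A}\tens{U}_{\spc{A}}$ is non-singular with $\det\tilde{\tens{A}} = \pdet(\tens{A})$ and $\tens{A}^{\dagger} = \tens{U}_{\spc{A}}\tilde{\tens{A}}^{-1}\tens{U}_{\spc{A}}^{\ast}$, and set $\tens{Q} \coloneqq \tens{U}_{\spc{A}}^{\ast}\tens{W}_{\spc{A}}$ (a unitary $r\times r$ matrix). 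Writing the four matrices of interest in the $\tens{U}_{\spc{A}}$-frame, with $I \coloneqq \{1,\dots,\mu\}$, $I^c \coloneqq \{\mu+1,\dots,r\}$, and $\tens{T}[\alpha,\beta]$ the submatrix on rows $\alpha$, columns $\beta$, a short block computation identifies
\[
\tens{V}_1^{\ast}\tens{U}_1 = \tens{Q}[I^c, I^c],\quad \tens{V}_1^{\ast}\tens{A}\tens{U}_1 = (\tilde{\tens{A}}\tens{Q})[I^c, I^c],\quad \tens{U}_{\hat{\spc{Y}}_{\spc{A}}}^{\ast}\tens{U}_{\hat{\spc{X}}_{\spc{A}}} = (\tens{Q}^{-1})[I, I],\quad \tens{U}_{\hat{\spc{Y}}_{\spc{A}}}^{\ast}\tens{A}^{\dagger}\tens{U}_{\hat{\spc{X}}_{\spc{A}}} = \bigl((\tilde{\tens{A}}\tens{Q})^{-1}\bigr)[I, I].
\]
Jacobi's identity for complementary minors, $\det(\tens{T}[I^c, I^c]) = \det(\tens{T})\,\det\bigl((\tens{T}^{-1})[I, I]\bigr)$, applied to $\tens{T} = \tens{Q}$ and to $\tens{T} = \tilde{\tens{A}}\tens{Q}$ (and $\det(\tilde{\tens{A}}\tens{Q}) = \pdet(\tens{A})\det\tens{Q}$) then gives the second equality once the $\det\tens{Q}$ factors cancel. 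Non-singularity of all four matrices comes at the end: $\pdet(\tens{M}) \neq 0$ (a product of nonzero eigenvalues, or $1$ when $\tens{M}$ is nilpotent), so the first equality forces $\det(\tens{V}_1^{\ast}\tens{U}_1) \neq 0$ and $\det(\tens{V}_1^{\ast}\tens{A}\tens{U}_1)\neq 0$ (the latter also from its explicit inverse), and the two Jacobi identities propagate this to $\tens{U}_{\hat{\spc{Y}}_{\spc{A}}}^{\ast}\tens{U}_{\hat{\spc{X}}_{\spc{A}}}$ and $\tens{U}_{\hat{\spc{Y}}_{\spc{A}}}^{\ast}\tens{A}^{\dagger}\tens{U}_{\hat{\spc{X}}_{\spc{A}}}$.

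The projector manipulations in the first two steps are routine; the main obstacle is seeing that \Cref{lem:XYAplus} collapses the four subspaces into a pair of orthonormal bases of $\image(\tens{A})$ related by a single unitary $\tens{Q}$, so that the second equality reduces to an identity about complementary minors of $\tens{Q}$ and $\tilde{\tens{A}}\tens{Q}$—and the EP hypothesis is exactly what places $\hat{\spc{Y}}^{\perp}$, not merely $\hat{\spc{X}}^{\perp}$, inside $\image(\tens{A})$.
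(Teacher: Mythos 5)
Your proof is correct, and while it follows the same overall skeleton as the paper's (reduce \(\vert\tens{M}\vert_{\dagger}\) to a compression determinant on \(\hat{\spc{X}}^{\perp}\); obtain the first ratio from \(\tens{M}^{\dagger}=\tens{P}_{\hat{\spc{X}}^{\perp}}\tens{A}\tens{P}_{\hat{\spc{Y}}^{\perp}}\); pass to the bases \([\tens{U}_{\hat{\spc{X}}_{\spc{A}}},\tens{U}_{\hat{\spc{X}}^{\perp}}]\) and \([\tens{U}_{\hat{\spc{Y}}_{\spc{A}}},\tens{U}_{\hat{\spc{Y}}^{\perp}}]\) of \(\spc{A}\) and cancel the transition-matrix determinant), the execution of each step is genuinely different. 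For the first equality the paper applies the reverse-order law for the Moore--Penrose inverse to \((\tens{U}_{\hat{\spc{X}}^{\perp}}\tens{U}_{\hat{\spc{X}}^{\perp}}^{\ast}\tens{A}\tens{U}_{\hat{\spc{Y}}^{\perp}}\tens{U}_{\hat{\spc{Y}}^{\perp}}^{\ast})^{\dagger}\), whereas you derive \((\tens{V}_1^{\ast}\tens{A}\tens{U}_1)(\tens{U}_1^{\ast}\tens{M}\tens{V}_1)=\tens{I}\) directly from \(\tens{M}^{\dagger}\tens{M}=\tens{P}_{\hat{\spc{X}}^{\perp}}\), which is more elementary and also hands you the non-singularity of \(\tens{V}_1^{\ast}\tens{A}\tens{U}_1\) for free. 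For the second equality the paper proves two auxiliary lemmas (\Cref{lem:pUxAUy} and \Cref{lem:pUxUy}) via an explicit block factorization of \(\tens{U}_{\spc{A}}'^{\ast}\tens{A}\tens{U}_{\spc{A}}''\) that is, in substance, a block-LU form of the same complementary-minor identity you invoke; your observation that \(\tens{Q}=\tens{U}_{\spc{A}}'^{\ast}\tens{U}_{\spc{A}}''\) is unitary (because both frames are orthonormal bases of \(\spc{A}\), which the paper never exploits) lets you quote Jacobi's identity twice and finish in a few lines. Your version is more compact and self-contained; the paper's organization has the separate payoff that its two lemmas are reused in the proof of \Cref{thm:pdet}. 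All the identifications you make (e.g.\ \(\tilde{\tens{A}}\tens{Q}=\tens{U}_{\spc{A}}'^{\ast}\tens{A}\tens{U}_{\spc{A}}''\) and \((\tilde{\tens{A}}\tens{Q})^{-1}=\tens{U}_{\spc{A}}''^{\ast}\tens{A}^{\dagger}\tens{U}_{\spc{A}}'\)) do use \(\tens{A}\tens{P}_{\spc{A}}=\tens{A}\) and \(\image(\tens{A}^{\dagger})=\spc{A}\), i.e.\ exactly the EP hypothesis, so nothing is hidden there; I see no gap.
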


\begin{theorem} \label{thm:pdet}
    Suppose \(\tens{A} \in \mathcal{M}_{n, n}(\mathbb{C})\) is EP, \(\ind(\tens{M}) = 1\), \(\hat{\tens{X}}\) and \(\hat{\tens{Y}}\) have the same number of columns, and
    \begin{equation}
        \ker(\tens{P}_{\spc{A}} \hat{\tens{X}}) \cap \coimage(\tens{P}_{\spc{A}} \hat{\tens{Y}}) = \{\vect{0}\}, \label{eq:ker-PXY}
    \end{equation}
    where \(\tens{P}_{\spc{A}} = \tens{A} \tens{A}^{\dagger}\) is an orthogonal projection matrix onto \(\spc{A}\). Then,
    \begin{equation}
        \vert \tens{M} \vert_{\dagger} = \frac{\vert \hat{\tens{Y}}^{\ast} \tens{P}_{\spc{A}} \hat{\tens{X}} \vert_{\dagger}}{\vert \tens{A}\vert_{\dagger} \, \vert \hat{\tens{Y}}^{\ast} \tens{A}^{\dagger} \hat{\tens{X}} \vert_{\dagger}}. \label{eq:pdet}
    \end{equation}
\end{theorem}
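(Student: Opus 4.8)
The plan is to reduce \eqref{eq:pdet} to the last expression in \eqref{eq:pinv-M-p} of \Cref{prop:pMp}, by rewriting $\vert\hat{\tens{Y}}^{\ast}\tens{P}_{\spc{A}}\hat{\tens{X}}\vert_{\dagger}$ and $\vert\hat{\tens{Y}}^{\ast}\tens{A}^{\dagger}\hat{\tens{X}}\vert_{\dagger}$ in terms of the orthonormal bases $\tens{U}_{\hat{\spc{X}}_{\spc{A}}}$ and $\tens{U}_{\hat{\spc{Y}}_{\spc{A}}}$. Throughout, set $d\coloneqq\dim(\hat{\spc{X}}_{\spc{A}})=\dim(\hat{\spc{Y}}_{\spc{A}})$ (equal by \Cref{lem:XYAplus}) and let $k$ be the common number of columns of $\hat{\tens{X}}$ and $\hat{\tens{Y}}$, so that $\hat{\tens{Y}}^{\ast}\tens{P}_{\spc{A}}\hat{\tens{X}}$ and $\hat{\tens{Y}}^{\ast}\tens{A}^{\dagger}\hat{\tens{X}}$ are $k\times k$ and their pseudo-determinants are meaningful.

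First I would record two reductions. Since $\tens{P}_{\spc{A}}$ is a Hermitian idempotent, $\tens{P}_{\spc{A}}=\tens{P}_{\spc{A}}^{\ast}\tens{P}_{\spc{A}}$; and since $\tens{A}$ is EP, $\tens{A}^{\dagger}\tens{A}=\tens{A}\tens{A}^{\dagger}=\tens{P}_{\spc{A}}$, whence $\tens{A}^{\dagger}=\tens{A}^{\dagger}\tens{A}\tens{A}^{\dagger}=\tens{P}_{\spc{A}}\tens{A}^{\dagger}\tens{P}_{\spc{A}}$. Therefore
\begin{equation*}
\hat{\tens{Y}}^{\ast}\tens{P}_{\spc{A}}\hat{\tens{X}}=(\tens{P}_{\spc{A}}\hat{\tens{Y}})^{\ast}(\tens{P}_{\spc{A}}\hat{\tens{X}}),\qquad \hat{\tens{Y}}^{\ast}\tens{A}^{\dagger}\hat{\tens{X}}=(\tens{P}_{\spc{A}}\hat{\tens{Y}})^{\ast}\tens{A}^{\dagger}(\tens{P}_{\spc{A}}\hat{\tens{X}}).
\end{equation*}
Using $\hat{\spc{X}}=\hat{\spc{X}}_{\spc{A}}\oplus\spc{A}^{\perp}$ and $\hat{\spc{Y}}=\hat{\spc{Y}}_{\spc{A}}\oplus\spc{A}^{\perp}$ from \Cref{lem:XYAplus}, we get $\image(\tens{P}_{\spc{A}}\hat{\tens{X}})=\tens{P}_{\spc{A}}\hat{\spc{X}}=\hat{\spc{X}}_{\spc{A}}$ and likewise $\image(\tens{P}_{\spc{A}}\hat{\tens{Y}})=\hat{\spc{Y}}_{\spc{A}}$, so there are unique matrices $\tens{C}_{\tens{X}},\tens{C}_{\tens{Y}}\in\mathcal{M}_{d,k}(\mathbb{C})$ of full row rank $d$ (since $\rank(\tens{P}_{\spc{A}}\hat{\tens{X}})=\dim\hat{\spc{X}}_{\spc{A}}=d$, and similarly for $\hat{\tens{Y}}$) with $\tens{P}_{\spc{A}}\hat{\tens{X}}=\tens{U}_{\hat{\spc{X}}_{\spc{A}}}\tens{C}_{\tens{X}}$ and $\tens{P}_{\spc{A}}\hat{\tens{Y}}=\tens{U}_{\hat{\spc{Y}}_{\spc{A}}}\tens{C}_{\tens{Y}}$. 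Substituting yields
\begin{equation*}
\hat{\tens{Y}}^{\ast}\tens{P}_{\spc{A}}\hat{\tens{X}}=\tens{C}_{\tens{Y}}^{\ast}\tens{D}_1\tens{C}_{\tens{X}},\qquad \hat{\tens{Y}}^{\ast}\tens{A}^{\dagger}\hat{\tens{X}}=\tens{C}_{\tens{Y}}^{\ast}\tens{D}_2\tens{C}_{\tens{X}},
\end{equation*}
with $\tens{D}_1\coloneqq\tens{U}_{\hat{\spc{Y}}_{\spc{A}}}^{\ast}\tens{U}_{\hat{\spc{X}}_{\spc{A}}}$ and $\tens{D}_2\coloneqq\tens{U}_{\hat{\spc{Y}}_{\spc{A}}}^{\ast}\tens{A}^{\dagger}\tens{U}_{\hat{\spc{X}}_{\spc{A}}}$, both $d\times d$ and non-singular by \Cref{prop:pMp}.

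The crux is a pseudo-determinant identity for products $\tens{C}_{\tens{Y}}^{\ast}\tens{D}\tens{C}_{\tens{X}}$ with $\tens{D}$ non-singular: viewing it as $(\tens{C}_{\tens{Y}}^{\ast})(\tens{D}\tens{C}_{\tens{X}})$ and using that $\tens{P}\tens{Q}$ and $\tens{Q}\tens{P}$ share the same nonzero eigenvalues counted with multiplicity, the nonzero eigenvalues of the $k\times k$ matrix $\tens{C}_{\tens{Y}}^{\ast}\tens{D}\tens{C}_{\tens{X}}$ coincide with those of the $d\times d$ matrix $\tens{D}\tens{C}_{\tens{X}}\tens{C}_{\tens{Y}}^{\ast}$; provided $\tens{C}_{\tens{X}}\tens{C}_{\tens{Y}}^{\ast}$ is non-singular, all $d$ of these are nonzero and hence $\vert\tens{C}_{\tens{Y}}^{\ast}\tens{D}\tens{C}_{\tens{X}}\vert_{\dagger}=\det(\tens{D})\det(\tens{C}_{\tens{X}}\tens{C}_{\tens{Y}}^{\ast})$. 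I expect the non-singularity of the $d\times d$ matrix $\tens{C}_{\tens{X}}\tens{C}_{\tens{Y}}^{\ast}$ to be the main (though modest) obstacle, and it is precisely here that hypothesis \eqref{eq:ker-PXY} enters: since $\ker(\tens{C}_{\tens{X}})=\ker(\tens{P}_{\spc{A}}\hat{\tens{X}})$ and $\image(\tens{C}_{\tens{Y}}^{\ast})=\ker(\tens{C}_{\tens{Y}})^{\perp}=\coimage(\tens{P}_{\spc{A}}\hat{\tens{Y}})$, condition \eqref{eq:ker-PXY} reads exactly $\ker(\tens{C}_{\tens{X}})\cap\image(\tens{C}_{\tens{Y}}^{\ast})=\{\vect{0}\}$; as $\tens{C}_{\tens{Y}}^{\ast}$ is injective with $d$-dimensional image, this forces the map $\tens{C}_{\tens{X}}\tens{C}_{\tens{Y}}^{\ast}\colon\mathbb{C}^d\to\mathbb{C}^d$ to be injective, hence invertible.

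Finally I would assemble the pieces. Applying the identity once with $\tens{D}=\tens{D}_1$ and once with $\tens{D}=\tens{D}_2$ — legitimate because $\tens{C}_{\tens{X}}\tens{C}_{\tens{Y}}^{\ast}$ is the same non-singular matrix in both — and taking the ratio, the common factor $\det(\tens{C}_{\tens{X}}\tens{C}_{\tens{Y}}^{\ast})$ cancels, giving
\begin{equation*}
\frac{\vert\hat{\tens{Y}}^{\ast}\tens{P}_{\spc{A}}\hat{\tens{X}}\vert_{\dagger}}{\vert\tens{A}\vert_{\dagger}\,\vert\hat{\tens{Y}}^{\ast}\tens{A}^{\dagger}\hat{\tens{X}}\vert_{\dagger}}=\frac{\det(\tens{D}_1)}{\vert\tens{A}\vert_{\dagger}\,\det(\tens{D}_2)}=\frac{\vert\tens{U}_{\hat{\spc{Y}}_{\spc{A}}}^{\ast}\tens{U}_{\hat{\spc{X}}_{\spc{A}}}\vert}{\vert\tens{A}\vert_{\dagger}\,\vert\tens{U}_{\hat{\spc{Y}}_{\spc{A}}}^{\ast}\tens{A}^{\dagger}\tens{U}_{\hat{\spc{X}}_{\spc{A}}}\vert},
\end{equation*}
which equals $\vert\tens{M}\vert_{\dagger}$ by \eqref{eq:pinv-M-p}. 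Apart from this cancellation, all that remains is routine dimension bookkeeping (that $k\ge d$, that $\tens{C}_{\tens{X}},\tens{C}_{\tens{Y}}$ really have full row rank, and that the matrices whose pseudo-determinants appear are square).
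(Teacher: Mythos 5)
Your proposal is correct, and its overall architecture mirrors the paper's: both reduce \eqref{eq:pdet} to the second expression in \eqref{eq:pinv-M-p}, both factor \(\tens{P}_{\spc{A}}\hat{\tens{X}}\) and \(\tens{P}_{\spc{A}}\hat{\tens{Y}}\) through orthonormal bases of \(\hat{\spc{X}}_{\spc{A}}\) and \(\hat{\spc{Y}}_{\spc{A}}\) (using \(\tens{P}_{\spc{A}}=\tens{P}_{\spc{A}}^{\ast}\tens{P}_{\spc{A}}\), \(\tens{A}^{\dagger}=\tens{P}_{\spc{A}}\tens{A}^{\dagger}\tens{P}_{\spc{A}}\), and \Cref{lem:XYAplus} in exactly the same way), and both spend hypothesis \eqref{eq:ker-PXY} on the invertibility of a cross-Gram matrix that then cancels in the ratio. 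Where you genuinely diverge is the device used to evaluate the pseudo-determinants of the rank-deficient \(k\times k\) products. The paper takes the factorization to be the SVDs \(\tens{P}_{\spc{A}}\hat{\tens{X}}=\tens{U}_{\hat{\spc{X}}_{\spc{A}}}\gtens{\Sigma}_{\hat{\spc{X}}_{\spc{A}}}\tens{V}_{\hat{\spc{X}}_{\spc{A}}}^{\ast}\) (and similarly for \(\hat{\tens{Y}}\)) and then runs a block-triangular unitary-similarity argument twice (its steps (iii) and (iv)) to prove \(\vert \tens{V}_{\hat{\spc{Y}}_{\spc{A}}}^{\ast} \tens{G} \tens{V}_{\hat{\spc{X}}_{\spc{A}}} \vert = \vert \tens{G} \vert_{\dagger}\, \vert \tens{V}_{\hat{\spc{X}}_{\spc{A}}}^{\ast} \tens{V}_{\hat{\spc{Y}}_{\spc{A}}} \vert^{-1}\), with \(\vert \tens{V}_{\hat{\spc{X}}_{\spc{A}}}^{\ast} \tens{V}_{\hat{\spc{Y}}_{\spc{A}}}\vert\) as the cancelling factor; your \(\tens{C}_{\tens{X}}\tens{C}_{\tens{Y}}^{\ast}\) is precisely \(\gtens{\Sigma}_{\hat{\spc{X}}_{\spc{A}}}\tens{V}_{\hat{\spc{X}}_{\spc{A}}}^{\ast}\tens{V}_{\hat{\spc{Y}}_{\spc{A}}}\gtens{\Sigma}_{\hat{\spc{Y}}_{\spc{A}}}\) in that notation, so the cancellations are the same object in different clothes. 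Your route replaces the SVD and the two similarity computations with the single fact that \(\tens{C}_{\tens{Y}}^{\ast}(\tens{D}\tens{C}_{\tens{X}})\) and \((\tens{D}\tens{C}_{\tens{X}})\tens{C}_{\tens{Y}}^{\ast}\) share their non-zero eigenvalues with multiplicity, which collapses each pseudo-determinant to an honest \(d\times d\) determinant in one stroke. This is shorter and arguably more transparent; what it gives up is only that the paper's SVD choice makes the identification of \eqref{eq:ker-PXY} with the non-singularity of \(\tens{V}_{\hat{\spc{X}}_{\spc{A}}}^{\ast}\tens{V}_{\hat{\spc{Y}}_{\spc{A}}}\) completely explicit, whereas you derive the same conclusion for \(\tens{C}_{\tens{X}}\tens{C}_{\tens{Y}}^{\ast}\) from the kernel/coimage description — which you do correctly. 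The remaining bookkeeping you defer (that \(\tens{C}_{\tens{X}},\tens{C}_{\tens{Y}}\) have full row rank \(d\), that \(\tens{D}_1,\tens{D}_2\) are non-singular) is indeed supplied by \Cref{lem:XYAplus} and \Cref{prop:pMp}, so there is no gap.
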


\begin{remark} \label{rem:kerXY}
    If \(\hat{\tens{X}} = \hat{\tens{Y}}\), then \eqref{eq:ker-PXY} is trivially satisfied. In general, it is always possible to choose \(\hat{\tens{X}}\) and \(\hat{\tens{Y}}\) to satisfy \eqref{eq:ker-PXY}; take for instance \(\hat{\tens{X}} = [\tens{U}_{\hat{\spc{X}}_{\spc{A}}}, \tens{U}_{\spc{A}^{\perp}}]\) and \(\hat{\tens{Y}} = [\tens{U}_{\hat{\spc{Y}}_{\spc{A}}}, \tens{U}_{\spc{A}^{\perp}}]\).
\end{remark}

We can also relate the (pseudo-)determinant of \(\tens{M}\) and \(\tens{N}\) based on \Cref{cor:comp}.

\begin{proposition} \label{prop:pdet-M-N}
    Suppose \(\tens{A} \in \mathcal{M}_{n,n}(\mathbb{C})\) and \(\ind(\tens{M}) = 1\). Then,
    \begin{equation}
        \vert \tens{M} \vert_{\dagger} = \vert \tens{M}_{\hat{\spc{X}}^{\perp}} \vert = \vert \tens{N}_{\hat{\spc{X}}^{\perp}} \vert^{-1} = \vert \tens{N} \vert^{-1}. \label{eq:det-N}
    \end{equation}
\end{proposition}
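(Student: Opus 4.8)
The plan is to introduce a single unitary basis adapted to $\hat{\spc{X}}$ and to read off all four quantities as ordinary determinants of blocks of size $n - \dim(\hat{\spc{X}})$. Extend the matrix $\tens{U}_{\hat{\spc{X}}^{\perp}}$ of \Cref{cor:comp} by an orthonormal basis $\tens{U}_{\hat{\spc{X}}}$ of $\hat{\spc{X}} = \ker(\tens{M})$ to a unitary matrix $\tens{U} \coloneqq [\tens{U}_{\hat{\spc{X}}^{\perp}}, \tens{U}_{\hat{\spc{X}}}] \in \mathcal{M}_{n,n}(\mathbb{C})$, so that $\tens{U}^{\ast} \tens{M} \tens{U}$ and $\tens{U}^{\ast} \tens{N} \tens{U}$ are $2 \times 2$ block matrices whose $(1,1)$ blocks are, by definition, $\tens{M}_{\hat{\spc{X}}^{\perp}}$ and $\tens{N}_{\hat{\spc{X}}^{\perp}}$. (If $\hat{\spc{X}} = \mathbb{C}^n$, i.e.\ $\tens{M} = \tens{0}$, all the blocks are empty, $\tens{N} = \tens{I}$ by \eqref{eq:def-N}, and every quantity in \eqref{eq:det-N} equals $1$ by the pseudo-determinant convention; so assume $\hat{\spc{X}} \neq \mathbb{C}^n$ below.)

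For the first equality, since the columns of $\tens{U}_{\hat{\spc{X}}}$ span $\ker(\tens{M})$ we have $\tens{M} \tens{U}_{\hat{\spc{X}}} = \tens{0}$, so the second block column of $\tens{U}^{\ast} \tens{M} \tens{U}$ vanishes and $\tens{U}^{\ast} \tens{M} \tens{U}$ is block lower triangular with diagonal blocks $\tens{M}_{\hat{\spc{X}}^{\perp}}$ and $\tens{0}$. Hence the characteristic polynomial of $\tens{M}$ factors as $\det(\lambda \tens{I}_n - \tens{M}) = \lambda^{\dim(\hat{\spc{X}})} \det(\lambda \tens{I} - \tens{M}_{\hat{\spc{X}}^{\perp}})$. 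By \Cref{cor:comp} the block $\tens{M}_{\hat{\spc{X}}^{\perp}}$ is non-singular, so $0$ is not one of its eigenvalues; therefore the nonzero eigenvalues of $\tens{M}$, counted with algebraic multiplicity, are precisely the eigenvalues of $\tens{M}_{\hat{\spc{X}}^{\perp}}$, and $\vert \tens{M} \vert_{\dagger} = \det(\tens{M}_{\hat{\spc{X}}^{\perp}}) = \vert \tens{M}_{\hat{\spc{X}}^{\perp}} \vert$. The second equality is then immediate from \eqref{eq:MNI}: $\tens{M}_{\hat{\spc{X}}^{\perp}} = (\tens{N}_{\hat{\spc{X}}^{\perp}})^{-1}$ gives $\vert \tens{M}_{\hat{\spc{X}}^{\perp}} \vert = \vert \tens{N}_{\hat{\spc{X}}^{\perp}} \vert^{-1}$.

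For the last equality, recall from \eqref{eq:N-rest} that $\tens{N}$ restricts to the identity on $\hat{\spc{X}}$, hence $\tens{N} \tens{U}_{\hat{\spc{X}}} = \tens{U}_{\hat{\spc{X}}}$. Consequently the $(1,2)$ block of $\tens{U}^{\ast} \tens{N} \tens{U}$ equals $\tens{U}_{\hat{\spc{X}}^{\perp}}^{\ast} \tens{U}_{\hat{\spc{X}}} = \tens{0}$ and the $(2,2)$ block equals $\tens{U}_{\hat{\spc{X}}}^{\ast} \tens{U}_{\hat{\spc{X}}} = \tens{I}$, so $\tens{U}^{\ast} \tens{N} \tens{U}$ is block lower triangular with diagonal blocks $\tens{N}_{\hat{\spc{X}}^{\perp}}$ and $\tens{I}$. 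Therefore $\vert \tens{N} \vert = \det(\tens{U}^{\ast} \tens{N} \tens{U}) = \vert \tens{N}_{\hat{\spc{X}}^{\perp}} \vert$, and since $\tens{N}$ is non-singular by \Cref{thm:M-rep}, taking reciprocals gives $\vert \tens{N}_{\hat{\spc{X}}^{\perp}} \vert^{-1} = \vert \tens{N} \vert^{-1}$, which closes the chain \eqref{eq:det-N}.

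The only delicate point is the first equality, where one must convert the pseudo-determinant of the singular matrix $\tens{M}$ into an ordinary determinant; this is exactly where the non-singularity of the compression $\tens{M}_{\hat{\spc{X}}^{\perp}}$ from \Cref{cor:comp} is essential, for without it the block-column structure alone would only yield $\vert \tens{M} \vert_{\dagger} = \vert \tens{M}_{\hat{\spc{X}}^{\perp}} \vert_{\dagger}$. Everything else reduces to the determinant of a block-triangular matrix under a unitary change of basis.
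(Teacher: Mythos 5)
Your proof is correct and follows essentially the same route as the paper: the paper also conjugates \(\tens{M}\) and \(\tens{N}\) by the unitary \([\tens{U}_{\hat{\spc{X}}}, \tens{U}_{\hat{\spc{X}}^{\perp}}]\), reads off \(\vert \tens{M} \vert_{\dagger} = \vert \tens{M}_{\hat{\spc{X}}^{\perp}} \vert\) and \(\vert \tens{N} \vert = \vert \tens{N}_{\hat{\spc{X}}^{\perp}} \vert\) from the resulting block-triangular forms (the former being equation \eqref{eq:det-UxMUx}, established in the proof of \Cref{prop:pMp} using the non-singularity of the compression from \Cref{cor:comp}), and closes the chain with \eqref{eq:MNI}. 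Your inline characteristic-polynomial argument for the first equality and your treatment of the degenerate case \(\hat{\spc{X}} = \mathbb{C}^n\) are only minor elaborations of the same argument.
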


By combining \Cref{thm:pdet} and \Cref{prop:pdet-M-N}, we can eliminate \(\vert \tens{M} \vert_{\dagger}\) and obtain a generic determinant identity for \(\tens{A}\), \(\tens{N}\), \(\hat{\tens{X}}\), and \(\hat{\tens{Y}}\) as given next. To completely remove \(\tens{M}\) from such identity, we replace \(\hat{\tens{X}}\) and \(\hat{\tens{Y}}\) (which are defined by \(\tens{M}\)) with arbitrary matrices \(\tens{X}\) and \(\tens{Y}\) as long as they satisfy the conditions that \(\hat{\tens{X}}\) and \(\hat{\tens{Y}}\) hold. Also, we set \(\tens{P} \coloneqq \tens{P}_{\hat{\spc{Y}}^{\perp}, \hat{\spc{X}}}\) to simplify notations.

\begin{corollary} \label{cor:det-N}
    Suppose \(\tens{A} \in \mathcal{M}_{n, n}(\mathbb{C})\), \(\tens{X}, \tens{Y} \in \mathcal{M}_{n, p}(\mathbb{C})\), and recall \(\spc{X} = \image(\tens{X})\), \(\spc{Y} = \image(\tens{Y})\), and
    \begin{align}
        &\tens{P} \coloneqq \tens{I} - \tens{X} (\tens{Y}^{\ast} \tens{X})^{\dagger} \tens{Y}^{\ast}, \label{eq:def-P2} \\
        &\tens{N} \coloneqq \tens{I} - \tens{P} + \tens{A} \tens{P}. \label{eq:def-N2}
    \end{align}
    If \(\tens{A}\) is EP and
    \begin{subequations} \label{eq:detN-cond}
    \begin{align}
        &\spc{X} \oplus \tens{A}\spc{Y}^{\perp} = \mathbb{C}^n, \label{eq:detN-cond1} \\
        &\spc{X} \oplus \spc{Y}^{\perp} = \mathbb{C}^n, \label{eq:detN-cond2} \\
        &\spc{A}^{\perp} \subseteq \spc{X} \cap \spc{Y}, \label{eq:detN-cond3} \\
        &\ker(\tens{P}_{\spc{A}} \tens{X}) \cap \coimage(\tens{P}_{\spc{A}} \tens{Y}) = \{\vect{0}\}, \label{eq:detN-cond4}
    \end{align}
    \end{subequations}
    Then, \(\tens{N}\) is non-singular and
    \begin{equation}
        \vert \tens{A} \vert_{\dagger} \, \vert \tens{Y}^{\ast} \tens{A}^{\dagger} \tens{X} \vert_{\dagger} = \vert \tens{Y}^{\ast} \tens{A} \tens{A}^{\dagger} \tens{X} \vert_{\dagger} \, \vert \tens{N} \vert. \label{eq:det-MN}
    \end{equation}
\end{corollary}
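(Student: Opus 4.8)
The plan is to derive \eqref{eq:det-MN} by eliminating $\vert\tens{M}\vert_{\dagger}$ between \Cref{thm:pdet} and \Cref{prop:pdet-M-N}: the former gives $\vert\tens{M}\vert_{\dagger}=\vert\hat{\tens{Y}}^{\ast}\tens{P}_{\spc{A}}\hat{\tens{X}}\vert_{\dagger}/\bigl(\vert\tens{A}\vert_{\dagger}\,\vert\hat{\tens{Y}}^{\ast}\tens{A}^{\dagger}\hat{\tens{X}}\vert_{\dagger}\bigr)$ and the latter gives $\vert\tens{M}\vert_{\dagger}=\vert\tens{N}\vert^{-1}$, so clearing denominators yields the identity with $\hat{\tens{X}},\hat{\tens{Y}}$ in place of $\tens{X},\tens{Y}$. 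The substance of the corollary is that $\hat{\tens{X}},\hat{\tens{Y}}$ may be replaced by \emph{arbitrary} $\tens{X},\tens{Y}$ obeying \eqref{eq:detN-cond1}--\eqref{eq:detN-cond4}; to make this rigorous I must show that for any such $\tens{X},\tens{Y}$ there is a matrix $\tens{M}$ of the form \eqref{eq:M} with $\ind(\tens{M})=1$, $\ker(\tens{M})=\spc{X}$ and $\coker(\tens{M})=\spc{Y}$, whose associated projector and matrix $\tens{N}$ of \Cref{thm:M-rep} are exactly \eqref{eq:def-P2}--\eqref{eq:def-N2}.

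First I would set $\tens{X}'\coloneqq\tens{P}_{\spc{A}}\tens{X}$ and $\tens{Y}'\coloneqq\tens{P}_{\spc{A}}\tens{Y}$ (recall $\tens{P}_{\spc{A}}=\tens{A}\tens{A}^{\dagger}$) and let $\tens{M}$ be \eqref{eq:M} built from $\tens{X}',\tens{Y}'$. Since $\tens{A}$ is EP we have $\tens{A}\tens{A}^{\dagger}=\tens{A}^{\dagger}\tens{A}$, $\image(\tens{A}^{\dagger})=\image(\tens{A}^{\dagger\ast})=\spc{A}$, $\tens{P}_{\spc{A}}\tens{A}^{\dagger}=\tens{A}^{\dagger}\tens{P}_{\spc{A}}=\tens{A}^{\dagger}$, and $\tens{A}^{\dagger}$ restricts to a bijection of $\spc{A}$ whose inverse is $\tens{A}|_{\spc{A}}$; consequently $(\tens{Y}')^{\ast}\tens{A}^{\dagger}\tens{X}'=\tens{Y}^{\ast}\tens{A}^{\dagger}\tens{X}$, and \eqref{eq:detN-cond3} gives $\image(\tens{X}')=\spc{X}\cap\spc{A}$, $\image(\tens{Y}')=\spc{Y}\cap\spc{A}$, and $\spc{Y}^{\perp}\subseteq\spc{A}$. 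The pivotal computation is that \eqref{eq:detN-cond1} forces $\ker(\tens{Y}^{\ast}\tens{A}^{\dagger}\tens{X})=\ker(\tens{P}_{\spc{A}}\tens{X})$: decomposing $\tens{X}\vect{v}=\vect{a}+\vect{b}$ with $\vect{a}\in\spc{A}$, $\vect{b}\in\spc{A}^{\perp}=\ker(\tens{A}^{\dagger})$, the vanishing of $\tens{Y}^{\ast}\tens{A}^{\dagger}\tens{X}\vect{v}=\tens{Y}^{\ast}\tens{A}^{\dagger}\vect{a}$ puts $\vect{a}$ into $\spc{X}\cap\tens{A}\spc{Y}^{\perp}=\{\vect{0}\}$. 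By \Cref{lem:F} this is \eqref{eq:A-yx-1} for $\tens{X}',\tens{Y}'$; and \eqref{eq:detN-cond2} forces $\dim(\spc{X})=\dim(\spc{Y})$, hence $\dim(\image\tens{X}')=\dim(\image\tens{Y}')$, so the last clause of \Cref{lem:F} also yields \eqref{eq:A-yx-2}. \Cref{thm:M-ker-coker}, with \eqref{eq:detN-cond3} used to recover the $\spc{A}^{\perp}$ summand, then gives $\hat{\spc{X}}=(\spc{X}\cap\spc{A})\oplus\spc{A}^{\perp}=\spc{X}$ and $\hat{\spc{Y}}=(\spc{Y}\cap\spc{A})\oplus\spc{A}^{\perp}=\spc{Y}$; so we take $\hat{\tens{X}}=\tens{X}$, $\hat{\tens{Y}}=\tens{Y}$ (equal column counts). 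Finally \eqref{eq:detN-cond2} reads $\hat{\spc{Y}}^{\perp}\oplus\hat{\spc{X}}=\mathbb{C}^{n}$, which is $\ind(\tens{M})=1$ by \Cref{prop:ind-1}, and \eqref{eq:detN-cond4} is exactly \eqref{eq:ker-PXY} for this $\hat{\tens{X}},\hat{\tens{Y}}$.

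With this in hand everything closes: by \eqref{eq:P_xy} of \Cref{thm:M-rep}, $\tens{P}_{\hat{\spc{Y}}^{\perp},\hat{\spc{X}}}=\tens{I}-\tens{X}(\tens{Y}^{\ast}\tens{X})^{\dagger}\tens{Y}^{\ast}=\tens{P}$, hence $\tens{P}_{\hat{\spc{X}},\hat{\spc{Y}}^{\perp}}=\tens{I}-\tens{P}$, and the $\tens{N}$ of \eqref{eq:def-N} equals $\tens{I}-\tens{P}+\tens{A}\tens{P}$, which \Cref{thm:M-rep} certifies non-singular. \Cref{prop:pdet-M-N} gives $\vert\tens{M}\vert_{\dagger}=\vert\tens{N}\vert^{-1}$, and \Cref{thm:pdet}---whose hypotheses were just checked---gives $\vert\tens{M}\vert_{\dagger}=\vert\tens{Y}^{\ast}\tens{P}_{\spc{A}}\tens{X}\vert_{\dagger}/\bigl(\vert\tens{A}\vert_{\dagger}\,\vert\tens{Y}^{\ast}\tens{A}^{\dagger}\tens{X}\vert_{\dagger}\bigr)$; equating these, clearing denominators, and substituting $\tens{P}_{\spc{A}}=\tens{A}\tens{A}^{\dagger}$ produces \eqref{eq:det-MN}.

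I expect the obstacle to be the middle step: checking that $\tens{X}'=\tens{P}_{\spc{A}}\tens{X}$, $\tens{Y}'=\tens{P}_{\spc{A}}\tens{Y}$ indeed reproduce $\spc{X}$ and $\spc{Y}$ as kernel and cokernel of $\tens{M}$, and that \eqref{eq:detN-cond1}--\eqref{eq:detN-cond4} are precisely the restatements of the hypotheses of \Cref{lem:F}, \Cref{thm:M-ker-coker}, \Cref{prop:ind-1}, and \Cref{thm:pdet} under $\hat{\tens{X}}\mapsto\tens{X}$, $\hat{\tens{Y}}\mapsto\tens{Y}$. The delicate point is tracking the $\spc{A}^{\perp}$ component inside $\hat{\spc{X}}$ and $\hat{\spc{Y}}$---this is exactly where the EP hypothesis on $\tens{A}$ and condition \eqref{eq:detN-cond3} are needed. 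After that the identity follows purely by cancelling $\vert\tens{M}\vert_{\dagger}$.
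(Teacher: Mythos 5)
Your proposal is correct and follows exactly the paper's route: the published proof is the one-liner ``Combining \eqref{eq:pdet} and \eqref{eq:det-N} and omitting the hat notations yields \eqref{eq:det-MN},'' relying on the surrounding remark that \eqref{eq:detN-cond1}--\eqref{eq:detN-cond4} restate \eqref{eq:A-ypx}, \Cref{prop:ind-1}, \Cref{lem:XYAplus}, and \eqref{eq:ker-PXY}. The only difference is that you actually carry out the ``omit the hats'' step rigorously --- constructing \(\tens{M}\) from \(\tens{P}_{\spc{A}}\tens{X}\), \(\tens{P}_{\spc{A}}\tens{Y}\) and verifying via \Cref{lem:F} and \Cref{thm:M-ker-coker} that \(\hat{\spc{X}}=\spc{X}\), \(\hat{\spc{Y}}=\spc{Y}\) --- which the paper leaves implicit; your checks are sound.
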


Note that the conditions \eqref{eq:detN-cond1}, \eqref{eq:detN-cond2}, \eqref{eq:detN-cond3}, and \eqref{eq:detN-cond4} imposed on \(\tens{X}\) and \(\tens{Y}\) are respectively the restatements of \eqref{eq:A-ypx}, \Cref{prop:ind-1}, \Cref{lem:XYAplus}, and \eqref{eq:ker-PXY} that \(\hat{\tens{X}}\) and \(\hat{\tens{Y}}\) satisfy. The conditions of \Cref{cor:det-N} can be met, for instance, by the hypotheses of \Cref{cor:XeqY} and \Cref{prop:A-XPD}. Namely, a Hermitian \(\tens{A}\) is EP; if \(\tens{X} = \tens{Y}\), then \eqref{eq:detN-cond2} and \eqref{eq:detN-cond4} are satisfied; if \(\spc{X} = \spc{Y}\) and \(\tens{A}^{\dagger}\) is \(\spc{X}\)-PD, we can show \eqref{eq:detN-cond1} holds; and, if \(\tens{A}\) is non-singular, then \eqref{eq:detN-cond3} is satisfied. We meet these conditions in \Cref{sec:app}. Also, in \Cref{sec:num}, we compare the numerical complexity of computing \eqref{eq:pinv-M-p} and both sides of \eqref{eq:det-MN}.


\section{Applications to Gaussian Process Regression} \label{sec:app}

We present two applications of the presented identities to the Gaussian process regression, a Bayesian, non-parametric method in supervised learning that provides probabilistic predictions. A brief overview of Gaussian process regression formulations is provided in \Cref{sec:gpr}. \Cref{sec:app-like} presents the utilization of the derived identities to express the likelihood function of a form of Gaussian process as a normal distribution. \Cref{sec:app-asym} derives a power series that is beneficial in the asymptotic analysis of Gaussian process regression.


\subsection{A Representation for the Likelihood Function} \label{sec:app-like}

Let the array of data \(\vect{y} \in \mathbb{R}^n\) be a realization of the stochastic function \(f: \mathcal{D} \to \mathbb{R}\) at \(n\) points with the mean \(\vect{\mu} \in \mathbb{R}^n\) and covariance \(\gtens{\Sigma} \in \mathcal{M}_{n, n}(\mathbb{R})\), which is a symmetric positive-definite (SPD) matrix. We assume \(\gtens{\Sigma}\) depends on the hyperparameters \(\vect{\theta}\). The mean \(\vect{\mu}\) is often modeled by the linear combination of \(p\) basis functions given by the columns of the full rank design matrix \(\tens{X} \in \mathcal{M}_{n, p}(\mathbb{R})\). More details on these variables can be found in \Cref{sec:gpr}. A Gaussian process prior on \(f\) with the above mean and covariance leads to the (marginal) likelihood of the data \(\vect{y}\), given \(\vect{\theta}\), as (see \citep[Equation 2.45]{RASMUSSEN-2006})
\begin{equation}
    p(\vect{y}| \vect{\theta}) = \frac{1}{\sqrt{(2\pi)^{n-m}}} | \gtens{\Sigma} |^{-\sfrac{1}{2}} |\tens{X}^{\ast} \gtens{\Sigma}^{-1} \tens{X} |^{-\sfrac{1}{2}} \exp\left(-\frac{1}{2} \| \vect{y} \|_{\tens{M}}^2 \right), \label{eq:like}
\end{equation}
where, \(\|\vect{y} \|_{\tens{M}}^2 \coloneqq \vect{y}^{\ast} \tens{M} \vect{y}\) and
\begin{equation}
    \tens{M} \coloneqq \gtens{\Sigma}^{-1} - \gtens{\Sigma}^{-1} \tens{X} \left(\tens{X}^{\ast} \gtens{\Sigma}^{-1} \tens{X} \right)^{-1} \tens{X}^{\ast} \gtens{\Sigma}^{-1}, \label{eq:M-like}
\end{equation}
which is the symmetric form of \eqref{eq:M} by setting \(\tens{Y} = \tens{X}\) therein and using \(\gtens{\Sigma}\) instead of \(\tens{A}\). We realize from \Cref{cor:XeqY} and \Cref{prop:A-XPD} that \(\ker(\tens{M}) = \coker(\tens{M}) = \spc{X}\). Note that \eqref{eq:like} is not a density function on \(\mathbb{R}^n\) as it does not have a finite measure since \(\left. p \right|_{\spc{X}}\) is constant. We herein only consider \(\vect{y} \in \spc{X}^{\perp}\).

Equation \eqref{eq:like} represents the limit case of the more general form of the likelihood function of the Gaussian process given in \eqref{eq:like-B}, where the matrix \(\tens{B}^{-1}\) therein vanishes. However, the transition from \eqref{eq:like-B} to \eqref{eq:like} is not straightforward as \(\tens{B}^{-1} \to \tens{0}\). Specifically, \eqref{eq:like} cannot be derived from \eqref{eq:like-B} by the naive substitution of \(\tens{B}^{-1} = \tens{0}\); but rather is derived through projecting \(\vect{y}\) on \(\spc{X}^{\perp}\) (as detailed in \citet[Section 2.7.1]{RASMUSSEN-2006} and references therein).

In contrast to \eqref{eq:like-B}, which can be expressed as the normal distribution \eqref{eq:like-B-2} through the application of the Woodbury identity and determinant lemma, the likelihood function \eqref{eq:like} is not immediately recognized as a normal distribution. Nevertheless, by leveraging the presented identities, we can demonstrate that \eqref{eq:like} is indeed a normal distribution on \(\spc{X}^{\perp}\). This is achieved through a combination of \Cref{rem:ind-1}, \Cref{thm:pdet}, and \Cref{rem:kerXY}. Namely, we have
\begin{equation*}
    | \gtens{\Sigma} | \, | \tens{X}^{\ast} \gtens{\Sigma} ^{-1}\tens{X}| = | \tens{X}^{\ast} \tens{X} | \, | \tens{M} |_{\dagger}^{-1}.
\end{equation*}
Thus, \eqref{eq:like} can be represented by
\begin{equation}
    p(\vect{y}| \vect{\theta}) \propto \frac{1}{\sqrt{(2\pi)^{n-m}}} | \tens{M} |_{\dagger}^{\sfrac{1}{2}} \exp\left(-\frac{1}{2} \| \vect{y} \|_{\tens{M}}^2 \right), \label{eq:like-2}
\end{equation}
where the constant of proportionality is \(|\tens{X}^{\ast} \tens{X}|^{\sfrac{-1}{2}}\).
We represent the restriction of \eqref{eq:like-2} on \(\spc{X}^{\perp}\) as follows. Define the compression of \(\gtens{\Sigma}\) and \(\tens{M}\) on \(\spc{X}^{\perp}\) by \(\gtens{\Sigma}_{\spc{X}^{\perp}} \coloneqq \tens{U}_{\spc{X}^{\perp}}^{\ast} \gtens{\Sigma} \tens{U}_{\spc{X}^{\perp}}\) and \(\tens{M}_{\spc{X}^{\perp}} \coloneqq \tens{U}_{\spc{X}^{\perp}}^{\ast} \tens{M} \tens{U}_{\spc{X}^{\perp}}\), respectively, where \(\tens{U}_{\spc{X}^{\perp}}\) is defined in \Cref{cor:comp}. Also, define \(\tens{N}\) using \(\gtens{\Sigma}\) and \(\spc{X}\) (instead of \(\tens{A}\) and \(\spc{Y}\)) in \eqref{eq:def-N}. Recall from \Cref{cor:comp} that \(\tens{N}_{\spc{X}^{\perp}} = \tens{U}_{\spc{X}^{\perp}}^{\ast} \tens{N} \tens{U}_{\spc{X}^{\perp}}\). From \subeqref{eq:N-rest}{b}, we can show \(\gtens{\Sigma}_{\spc{X}^{\perp}} = \tens{N}_{\spc{X}^{\perp}}\), so \eqref{eq:MNI} yields \(\tens{M}_{\spc{X}^{\perp}} =(\gtens{\Sigma}_{\spc{X}^{\perp}})^{-1}\), and from \Cref{prop:pdet-M-N}, we obtain
\begin{equation}
    \vert \tens{M} \vert_{\dagger} = \vert \gtens{\Sigma}_{\spc{X}^{\perp}} \vert^{-1}. \label{eq:M-Sigma}
\end{equation}
Let \(\vect{y}_{\spc{X}^{\perp}} \coloneqq \tens{U}_{\spc{X}^{\perp}}^{\ast} \vect{y}\), which is the projection of \(\vect{y}\) onto \(\spc{X}^{\perp}\) represented by its coordinates on the basis of the columns of \(\tens{U}_{\spc{X}^{\perp}}\). Using \(\tens{P}_{\spc{X}^{\perp}} = \tens{I} - \tens{P}_{\spc{X}}\), we can show \(\tens{M} = \tens{P}_{\spc{X}^{\perp}} \tens{M} \tens{P}_{\spc{X}^{\perp}}\), and by setting \(\tens{P}_{\spc{X}^{\perp}} = \tens{U}_{\spc{X}^{\perp}} \tens{U}_{\spc{X}^{\perp}}^{\ast}\) we can obtain
\begin{equation}
    \| \vect{y} \|_{\tens{M}} = \| \vect{y}_{\spc{X}^{\perp}} \|_{\big( \gtens{\Sigma}_{\spc{X}^{\perp}}\big)^{-1}}. \label{eq:M-y}
\end{equation}
Based on \eqref{eq:M-Sigma} and \eqref{eq:M-y}, we realize \eqref{eq:like-2} is the \((n-m)\)-dimensional normal distribution on \(\spc{X}^{\perp}\), \ie
\begin{equation*}
    \vect{y}_{\spc{X}^{\perp}} | \vect{\theta} \sim \mathcal{N} \left( \tens{0}, \gtens{\Sigma}_{\spc{X}^{\perp}}\right).
\end{equation*}
Moreover, \(\tens{M}_{\spc{X}^{\perp}}\) is the precision matrix of the above distribution. Motivated by this, we may regard \(\tens{M}\) as the precision matrix for \eqref{eq:like-2} if we extend the definition of the precision matrix as follows.

\begin{definition} \label{def:precision}
    We define the precision matrix as the Bott-Duffin inverse of the covariance matrix, since by \Cref{rem:bott-duffin}, we have
\begin{equation}
    \tens{M} = \gtens{\Sigma}^{(-1)}_{(\spc{X}^{\perp})}. \label{eq:precision}
\end{equation}
Also, we denote the likelihood function \eqref{eq:like-2} (\ie a normal distribution on \(\mathcal{X}^{\perp}\), but constant along \(\mathcal{X}\)) by
\begin{equation}
    \vect{y} \vert \vect{\theta} \sim \mathcal{N}_{\mathcal{X}^{\perp}}^{-1}(\vect{0}, \tens{M}),
\end{equation}
where \(\mathcal{N}^{-1}\) represents the canonical (or information) parametrization of the normal distribution\footnote{The canonical parametrization of the normal distribution \(\mathcal{N}(\vect{\mu}, \gtens{\Sigma})\) is denoted by \(\mathcal{N}^{-1}(\vect{\eta}, \tens{M})\) where \(\tens{M} \coloneqq \gtens{\Sigma}^{-1}\) is the precision matrix and \(\vect{\eta} \coloneqq \tens{M} \vect{\mu}\) is the potential vector (see \eg \citep[Definition 2.2]{RUE-2005}).} with the potential vector \(\vect{0}\) and the precision matrix \(\tens{M}\). Note that if \(\spc{X}^{\perp} = \mathbb{R}^n\), then \eqref{eq:precision} reduces to the conventional definition of the precision matrix of the normal distribution in \(\mathbb{R}^n\).
\end{definition}

The extended definition of precision matrix preserves the key characteristics of the conventional definition. For instance, it is well established that the zero entries in the precision matrix indicate the conditional independence between variables \citep[p. 21, Threorem 2.2]{RUE-2005}. Specifically, when \(M_{ij} = 0\), it implies that \(y_i\) and \(y_j\) are independent given all other \(y_k\), \(k \neq i, j\). This is particularly useful in Gaussian graphical models. We show that this property also holds for \Cref{def:precision}.

\begin{proposition}[Conditional Independence] \label{prop:cond-ind}
    Suppose \(\tens{M}\) is the precision matrix for \eqref{eq:like-2} in the sense of \Cref{def:precision}. If \(M_{ij} = 0\), then \(y_i \indep y_j \mid y_{\setminus \{i, j\}}\), meaning that \(y_i\) and \(y_j\) are conditionally independent given \(\{y_k \mid k = 1, \dots, n; \; k \neq i, j\}\). 
\end{proposition}

\begin{proof}
    If \(M_{ij} = 0\), we can write \(\Vert \vect{y} \Vert_{\tens{M}}^2 = y_i^2 M_{ii} + a y_i + y_{j}^2 M_{jj} + b y_j + c\) where \(a\), \(b\), and \(c\) are constants involving \(y_{\setminus \{i, j\}}\). Thus, from \eqref{eq:like-2} we have \(p(y_i, y_j \mid y_{\setminus \{i, j\}}, \vect{\theta}) = p(y_i \mid y_{\setminus \{i, j\}}, \vect{\theta}) \, p(y_j \mid y_{\setminus \{i, j\}}, \vect{\theta})\), implying that \(y_i\) and \(y_j\) are independent, given \(y_{\setminus \{i, j\}}\).
\end{proof}

The precision matrix also has the desirable property of allowing for the easy calculation of the conditional precision of a normal distribution. This is achieved by using the diagonal blocks of the precision matrix \citep[p. 26, Theorem 2.5]{RUE-2005}. Namely, if we partition the data by \(\vect{y} = [\vect{y}_1, \vect{y}_2]^{\ast}\) with the corresponding division of the precision matrix as \(\tens{M} = \begin{bmatrix} \tens{M}_{11} & \tens{M}_{12} \\ \tens{M}_{12}^{\ast} & \tens{M}_{22} \end{bmatrix}\), then the resulting conditional precision for \(\vect{y}_1 \vert \vect{y}_2\) is simply \(\tens{M}_{1 \vert 2} = \tens{M}_{11}\). We generalize this property as follows. First, note that the partitions \(\vect{y}_1\) and \(\vect{y}_2\) can be regarded as the components of orthogonal projections of \(\vect{y}\) on two sets of coordinate axes. We generalize this in \Cref{prop:cond-marg} by allowing \(\vect{y}\) to be projected on two complementary subspaces. As a special case of \Cref{prop:cond-marg}, in \Cref{cor:cond-marg}, we present both the conditional and marginal precisions of the partitioned data \(\vect{y} = [\vect{y}_1, \vect{y}_2]^{\ast}\) which applies to \Cref{def:precision} of the precision matrix.

\begin{proposition} \label{prop:cond-marg}
    Suppose \(\tens{M}\) is the precision matrix for \eqref{eq:like-2} in the sense of \Cref{def:precision}. Let \(\mathcal{Y}_1, \mathcal{Y}_2 \in \mathbb{R}^n\) be complementary subspace where \(\mathcal{Y}_1 \cap \mathcal{X} = \varnothing\) and we recall \(\mathcal{X} = \ker(\tens{M})\). Define
    \begin{equation}
        \tens{M}_{\mathcal{Y}_1} \coloneqq \tens{P}_{\mathcal{Y}_1} \tens{M} \tens{P}_{\mathcal{Y}_1},
        \quad
        \tens{M}_{\mathcal{Y}_1,\mathcal{Y}_2} \coloneqq \tens{P}_{\mathcal{Y}_1} \tens{M} \tens{P}_{\mathcal{Y}_2},
        \quad
        \tens{M}_{\mathcal{Y}_2} \coloneqq \tens{P}_{\mathcal{Y}_2} \tens{M} \tens{P}_{\mathcal{Y}_2}.
        \label{eq:M12}
    \end{equation}
    Also, define the projections \(\vect{y}_{\mathcal{Y}_1} \coloneqq \tens{P}_{\mathcal{Y}_1, \mathcal{Y}_2} \vect{y}\) and \(\vect{y}_{\mathcal{Y}_2} \coloneqq \tens{P}_{\mathcal{Y}_2, \mathcal{Y}_1} \vect{y}\). Then, \(\Vert \vect{y} \Vert_{\tens{M}}^2\) can be decomposed by
    \begin{equation}
        \Vert \vect{y} \Vert_{\tens{M}}^2 = \Vert \vect{y}_{\mathcal{Y}_1} - \vect{\mu}_{\mathcal{Y}_1 \vert \mathcal{Y}_2} \Vert_{\tens{M}_{\mathcal{Y}_1}}^2 + \Vert \vect{y}_{\mathcal{Y}_2} \Vert_{\tens{M}'_{\mathcal{Y}_2}}^2, \label{eq:cond-marg}
    \end{equation}
    where \(\vect{\mu}_{\mathcal{Y}_1 \vert \mathcal{Y}_2} = \tens{M}_{\mathcal{Y}_1}^{\dagger} \vect{\eta}_{\mathcal{Y}_1 \vert \mathcal{Y}_2}\) and
    \begin{subequations}
    \begin{align}
        & \vect{\eta}_{\mathcal{Y}_1 \vert \mathcal{Y}_2} \coloneqq -\tens{M}_{\mathcal{Y}_1, \mathcal{Y}_2} \vect{y}_{\mathcal{Y}_2}, \\
        & \tens{M}'_{\mathcal{Y}_2} \coloneqq \tens{M}_{\mathcal{Y}_2} - \tens{M}_{\mathcal{Y}_1, \mathcal{Y}_2}^{\ast} \tens{M}_{\mathcal{Y}_1}^{\dagger} \tens{M}_{\mathcal{Y}_1, \mathcal{Y}_2}.
    \end{align}
    \end{subequations}
\end{proposition}

\begin{proof}
    Since \(\tens{P}_{\mathcal{Y}_1, \mathcal{Y}_2}\) and \(\tens{P}_{\mathcal{Y}_2, \mathcal{Y}_1}\) are complementary projections, we have \(\vect{y} = \vect{y}_{\mathcal{Y}_1} + \vect{y}_{\mathcal{Y}_2}\). By using this and the trivial identities \(\tens{P}_{\mathcal{Y}_1, \mathcal{Y}_2} = \tens{P}_{\mathcal{Y}_1} \tens{P}_{\mathcal{Y}_1, \mathcal{Y}_2}\) and \(\tens{P}_{\mathcal{Y}_2, \mathcal{Y}_1} = \tens{P}_{\mathcal{Y}_2} \tens{P}_{\mathcal{Y}_2, \mathcal{Y}_1}\), and applying the second equation of \eqref{eq:penrose} for the pseudo-inverse of \(\tens{M}_{\mathcal{Y}_1}\), we can write
    \begin{align}
        \Vert \vect{y} \Vert_{\tens{M}}^2 &= 
        \vect{y}_{\mathcal{Y}_1}^{\ast} \tens{M}_{\mathcal{Y}_1} \vect{y}_{\mathcal{Y}_1} + 2 \vect{y}_{\mathcal{Y}_1}^{\ast} \tens{M}_{\mathcal{Y}_1, \mathcal{Y}_2} \vect{y}_{\mathcal{Y}_2} + \vect{y}_{\mathcal{Y}_2}^{\ast} \tens{M}_{\mathcal{Y}_2} \vect{y}_{\mathcal{Y}_2} \notag \\
                                          &= \Vert \vect{y}_{\mathcal{Y}_1} - \vect{\mu}_{\mathcal{Y}_1 \vert \mathcal{Y}_2} \Vert_{\tens{M}_{\mathcal{Y}_1}}^2 + 2 \vect{y}_{\mathcal{Y}_1}^{\ast} \tens{S} \vect{y}_{\mathcal{Y}_2} + \Vert \vect{y}_{\mathcal{Y}_2} \Vert_{\tens{M}'_{\mathcal{Y}_2}}, \label{eq:SM12}
    \end{align}
    where \(\tens{S} \coloneqq \tens{M}_{\mathcal{Y}_1, \mathcal{Y}_2} - \tens{M}_{\mathcal{Y}_1} \tens{M}_{\mathcal{Y}_1}^{\dagger} \tens{M}_{\mathcal{Y}_1, \mathcal{Y}_2}\). We now show \(\tens{S} = \tens{0}\) as follows. Note that \(\tens{M}_{\mathcal{Y}_1} \tens{M}_{\mathcal{Y}_1}^{\dagger} = \tens{P}_{\image(\tens{M}_{\mathcal{Y}_1})}\) and \(\image(\tens{M}_{\mathcal{Y}_1}) = \mathcal{Y}_1\) since \(\mathcal{Y}_1 \cap \mathcal{X} = \varnothing\) by the hypothesis. On the other hand, \(\image(\tens{M}_{\mathcal{Y}_1, \mathcal{Y}_2}) \subset \mathcal{Y}_1\), therefore, \((\tens{M}_{\mathcal{Y}_1} \tens{M}_{\mathcal{Y}_1}^{\dagger}) \tens{M}_{\mathcal{Y}_1, \mathcal{Y}_2} = \tens{M}_{\mathcal{Y}_1, \mathcal{Y}_2}\). Thus, \(\tens{S}\) vanishes and \eqref{eq:SM12} concludes \eqref{eq:cond-marg}.
\end{proof}

\begin{corollary}[Conditional and Marginal Precisions] \label{cor:cond-marg}
    Suppose \(\tens{M}\) is the precision matrix for \eqref{eq:like-2} in the sense of \Cref{def:precision}. Consider the partition of the data \(\vect{y} = [\vect{y}_1, \vect{y}_2]^{\ast} \in \mathbb{R}^n\) where \(\vect{y}_i \in \mathbb{R}^{n_i}\), \(i = 1, 2\), and \(n_1 + n_2 = n\). Let the corresponding partition of the precision matrix be \(\tens{M} = \begin{bmatrix} \tens{M}_{11} & \tens{M}_{12} \\ \tens{M}_{12}^{\ast} & \tens{M}_{22} \end{bmatrix}\). Also define \(\mathcal{Y}_1 \coloneqq \spn(\vect{e}_1, \dots, \vect{e}_{n_1})\) where \(\vect{e}_i\) is the unit vector along the \(i\)-th axis. If \(\mathcal{Y}_1 \cap \mathcal{X} = \varnothing\), then, the distribution of \(\vect{y}_1\) given \(\vect{y}_2\) is
    \begin{equation}
        \vect{y}_1 \vert \vect{y}_2, \vect{\theta} \sim \mathcal{N}^{-1}(\vect{\eta}_{1 \vert 2}, \tens{M}_{1 \vert 2}),
    \end{equation}
    which is the canonical (or information) parametrization of the normal distribution with the potential vector and precision matrix respectively given by
    \begin{subequations} \label{eq:cond-canonical}
    \begin{align}
        & \vect{\eta}_{1 \vert 2} = -\tens{M}_{12} \vect{y}_2, \\
        & \tens{M}_{1 \vert 2} = \tens{M}_{11}.
    \end{align}
    \end{subequations}
    Furthermore, the marginal distribution of \(\vect{y}_2\) is
    \begin{equation}
        \vect{y}_2 \vert \vect{\theta} \sim \mathcal{N}^{-1}(\vect{0}, \tens{M}'_{22}), \label{eq:y2-marg}
    \end{equation}
    where \(\tens{M}'_{22} \coloneqq \tens{M}_{22} - \tens{M}_{12}^{\ast} \tens{M}_{11}^{\dagger} \tens{M}_{12}\) is the marginal precision.
\end{corollary}

\begin{proof}
    Let \(\mathcal{Y}_2 \coloneqq \mathcal{Y}_1^{\perp}\). The projected vectors \(\vect{y}_{\mathcal{Y}_1}\) and \(\vect{y}_{\mathcal{Y}_2}\) in \Cref{prop:cond-marg} become \(\vect{y}_{\mathcal{Y}_1} = [\vect{y}_1, \vect{0}]^{\ast}\) and \(\vect{y}_{\mathcal{Y}_2} = [\vect{0}, \vect{y}_2]^{\ast}\). Also, the matrices in \eqref{eq:M12} become
    \begin{equation*}
        \tens{M}_{\mathcal{Y}_1} = \begin{bmatrix} \tens{M}_{11} & \tens{0} \\ \tens{0} & \tens{0} \end{bmatrix},
        \quad
        \tens{M}_{\mathcal{Y}_1, \mathcal{Y}_2} = \begin{bmatrix} \tens{0} & \tens{M}_{12} \\ \tens{0} & \tens{0} \end{bmatrix},
        \quad
        \tens{M}_{\mathcal{Y}_2} = \begin{bmatrix} \tens{0} & \tens{0} \\ \tens{0} & \tens{M}_{22} \end{bmatrix}.
    \end{equation*}
    By using the above vectors and matrices in \Cref{prop:cond-marg}, the relation \eqref{eq:cond-marg} becomes
    \begin{equation}
        \Vert \vect{y} \Vert_{\tens{M}}^2 = \Vert \vect{y}_1 - \vect{\mu}_{1 \vert 2} \Vert_{\tens{M}_{11}}^2 + \Vert \vect{y}_2 \Vert_{\tens{M}'_{22}}^2, \label{eq:cond-marg-2}
    \end{equation}
    where \(\vect{\mu}_{1 \vert 2} \coloneqq -\tens{M}_{11}^{\dagger} \tens{M}_{12} \vect{y}_2\). By assuming \(\vect{y}_2\) is given in \eqref{eq:cond-marg-2}, the relation \eqref{eq:like-2} becomes
    \begin{equation*}
        p(\vect{y}_1| \vect{y}_2, \vect{\theta}) \propto \exp\left(-\frac{1}{2} \| \vect{y}_1  - \vect{\mu}_{1 \vert 2}\|_{\tens{M}_{11}}^2 \right), \label{eq:like-3}
    \end{equation*}
    which is a normal distribution with the potential vector \(\vect{\eta}_{1 \vert 2} \coloneqq \tens{M}_{11} \vect{\mu}_{1 \vert 2} = -\tens{M}_{12} \vect{y}_{2}\) and the precision matrix \(\tens{M}_{11}\) and concludes \eqref{eq:cond-canonical}. Also, by using \eqref{eq:cond-marg-2} in \eqref{eq:like-2} and marginalizing with respect to \(\vect{y}_1\), we get
    \begin{equation}
        p(\vect{y}_2 \vert \vect{\theta}) \propto \exp\left(-\frac{1}{2} \| \vect{y}_2 \|_{\tens{M}'_{22}}^2 \right),
    \end{equation}
    which concludes \eqref{eq:y2-marg}.
\end{proof}

\begin{remark}[Conditional-Marginal Decomposition]
    The relation \eqref{eq:cond-marg-2} is the decomposition of the Mahalanobis distance \(\Vert \vect{y} \Vert_{\tens{M}}^2\) into the conditional part (the quadratic term involving \(\vect{y}_1\)) and the marginal part (the quadratic term involving \(\vect{y}_2\)). \Cref{prop:cond-marg} extends this decomposition by allowing \(\vect{y}\) to be projected onto subspaces \(\mathcal{Y}_1\) and \(\mathcal{Y}_2\) with \(\tens{M}_{\mathcal{Y}_1}\) and \(\tens{M}_{\mathcal{Y}_2}\) representing the compressions of \(\tens{M}\) on \(\mathcal{Y}_1\) and \(\mathcal{Y}_2\), respectively.  \Cref{cor:cond-marg} is as a specific instance of \Cref{prop:cond-marg} where \(\mathcal{Y}_1 \coloneqq \spn(\vect{e}_1, \dots, \vect{e}_{n_1})\) and \(\mathcal{Y}_2 \coloneqq \spn(\vect{e}_{n_1+1}, \dots, \vect{e}_n)\).
\end{remark}


\subsection{Asymptotic Analysis of Gaussian Process} \label{sec:app-asym}

We present another application of \Cref{thm:M-rep} that provides an asymptotic analysis for the Gaussian process regression with a linear mixed model. Consider the covariance matrix
\begin{equation}
    \gtens{\Sigma}(\sigma, \varsigma) = \sigma^2 \tens{K} + \varsigma^2 \tens{I}, \label{eq:mixed-cov}
\end{equation}
where the matrix \(\tens{K} \neq \tens{I}\) is SPD, and the hyperparameters \(\sigma^2\) and \(\varsigma^2\) are the variances of regression misfit and input noise, respectively. 

\begin{proposition} \label{prop:mixed-cov}
    The precision matrix in \eqref{eq:M-like} for the likelihood function \eqref{eq:like} with the covariance matrix \eqref{eq:mixed-cov} can be represented by the series
    \begin{equation}
        \tens{M}(t, \varsigma) = \varsigma^{-2} \tens{P} \sum_{i=0}^{\infty} (-t \tens{K} \tens{P})^i, \label{eq:M-series}
    \end{equation}
    where \(t \coloneqq \sigma^2 / \varsigma^2\), and \(\tens{P} = \tens{I} - \tens{X} (\tens{X}^{\ast} \tens{X})^{-1} \tens{X}^{\ast}\). The above series is convergent if \(t < \lambda_n^{-1}\), where \(\lambda_n\) is the largest eigenvalue of \(\tens{K}\).
\end{proposition}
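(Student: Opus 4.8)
The idea is to reduce to the subspace $\spc{X}^{\perp}$, where $\tens{M}$ is non-singular, expand a Neumann series there, and lift back to $\mathbb{R}^{n}$. Observe first that $\gtens{\Sigma}$ in \eqref{eq:mixed-cov} is SPD, hence Hermitian and EP, and that $\tens{M}$ in \eqref{eq:M-like} is the symmetric instance of \eqref{eq:M} with $\tens{A}=\gtens{\Sigma}$ and $\tens{Y}=\tens{X}$. Thus the hypotheses of \Cref{cor:XeqY} and \Cref{prop:A-XPD} are met, so $\ker(\tens{M})=\coker(\tens{M})=\spc{X}$ and, by \Cref{rem:ind-1}, $\ind(\tens{M})=1$; moreover the matrix $\tens{P}=\tens{I}-\tens{X}(\tens{X}^{\ast}\tens{X})^{-1}\tens{X}^{\ast}$ in the statement is precisely the orthogonal projector $\tens{P}_{\spc{X}^{\perp}}$. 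By \Cref{rem:bott-duffin}, $\tens{M}=\tens{P}(\tens{I}-\tens{P}+\gtens{\Sigma}\tens{P})^{-1}$; equivalently, as recorded in \Cref{sec:app-like}, $\tens{M}=\tens{U}_{\spc{X}^{\perp}}\tens{M}_{\spc{X}^{\perp}}\tens{U}_{\spc{X}^{\perp}}^{\ast}$ with $\tens{M}_{\spc{X}^{\perp}}=(\gtens{\Sigma}_{\spc{X}^{\perp}})^{-1}$ and $\tens{P}=\tens{U}_{\spc{X}^{\perp}}\tens{U}_{\spc{X}^{\perp}}^{\ast}$.

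Next I would carry out the series expansion along the compression route. Substituting \eqref{eq:mixed-cov} and using $\tens{U}_{\spc{X}^{\perp}}^{\ast}\tens{U}_{\spc{X}^{\perp}}=\tens{I}$ gives $\gtens{\Sigma}_{\spc{X}^{\perp}}=\sigma^{2}\tens{K}_{\spc{X}^{\perp}}+\varsigma^{2}\tens{I}$ with $\tens{K}_{\spc{X}^{\perp}}\coloneqq\tens{U}_{\spc{X}^{\perp}}^{\ast}\tens{K}\tens{U}_{\spc{X}^{\perp}}$, so $\tens{M}_{\spc{X}^{\perp}}=\varsigma^{-2}(\tens{I}+t\tens{K}_{\spc{X}^{\perp}})^{-1}=\varsigma^{-2}\sum_{i=0}^{\infty}(-t\tens{K}_{\spc{X}^{\perp}})^{i}$ whenever the series converges, where $t=\sigma^{2}/\varsigma^{2}$. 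Lifting term by term, the only bookkeeping identity needed is $\tens{U}_{\spc{X}^{\perp}}(\tens{K}_{\spc{X}^{\perp}})^{i}\tens{U}_{\spc{X}^{\perp}}^{\ast}=\tens{P}(\tens{K}\tens{P})^{i}$, which follows by inserting $\tens{U}_{\spc{X}^{\perp}}\tens{U}_{\spc{X}^{\perp}}^{\ast}=\tens{P}$ between the $i$ copies of $\tens{K}$; summing over $i$ and factoring out the leading $\tens{P}$ produces \eqref{eq:M-series}. A basis-free alternative is to substitute \eqref{eq:mixed-cov} into $\tens{M}=\tens{P}(\tens{I}-\tens{P}+\gtens{\Sigma}\tens{P})^{-1}$, write $\tens{I}-\tens{P}+\gtens{\Sigma}\tens{P}=\tens{D}(\tens{I}+\sigma^{2}\tens{D}^{-1}\tens{K}\tens{P})$ with $\tens{D}\coloneqq\tens{I}-\tens{P}+\varsigma^{2}\tens{P}$ and $\tens{D}^{-1}=\tens{I}-\tens{P}+\varsigma^{-2}\tens{P}$, expand the Neumann series, and collapse the products using $\tens{P}\tens{D}^{-1}=\varsigma^{-2}\tens{P}$; this reaches the same series.

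For the convergence claim I would note that the partial sums of $\tens{P}\sum_{i=0}^{N}(-t\tens{K}\tens{P})^{i}$ equal $\tens{U}_{\spc{X}^{\perp}}\big(\sum_{i=0}^{N}(-t\tens{K}_{\spc{X}^{\perp}})^{i}\big)\tens{U}_{\spc{X}^{\perp}}^{\ast}$, so the series converges as soon as the Neumann series for $t\tens{K}_{\spc{X}^{\perp}}$ does, i.e. as soon as $t\,\rho(\tens{K}_{\spc{X}^{\perp}})<1$. Since $\tens{K}_{\spc{X}^{\perp}}$ is the compression of the SPD matrix $\tens{K}$ onto $\spc{X}^{\perp}$, the Rayleigh-quotient bound $z^{\ast}\tens{K}_{\spc{X}^{\perp}}z=(\tens{U}_{\spc{X}^{\perp}}z)^{\ast}\tens{K}(\tens{U}_{\spc{X}^{\perp}}z)\le\lambda_{n}\|z\|^{2}$ yields $\rho(\tens{K}_{\spc{X}^{\perp}})=\lambda_{\max}(\tens{K}_{\spc{X}^{\perp}})\le\lambda_{n}$ (an instance of eigenvalue interlacing for compressions), hence $t<\lambda_{n}^{-1}$ suffices. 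I expect the only genuinely substantive step to be the reduction in the first paragraph -- checking that $\tens{M}$ is the Bott-Duffin inverse supported on $\spc{X}^{\perp}$ so that its compression retains all of its information -- and this is already available from $\ker(\tens{M})=\coker(\tens{M})=\spc{X}$ together with the development in \Cref{sec:app-like}; the remaining manipulations are routine, relying only on the standard Neumann-series criterion and eigenvalue interlacing.
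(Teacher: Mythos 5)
Your proof is correct, and it reaches the same two-line core as the paper (the representation \(\varsigma^{2}\tens{M}=\tens{P}(\tens{I}+t\tens{K}\tens{P})^{-1}\) via \Cref{thm:M-rep}/\Cref{rem:bott-duffin}, followed by a Neumann expansion), but your primary route differs in where the expansion is performed. The paper rescales \(\gtens{\Sigma}=\varsigma^{2}(\tens{I}+t\tens{K})\), invokes \Cref{thm:M-rep} to get \(\varsigma^{2}\tens{M}=\tens{P}(\tens{I}+t\tens{K}\tens{P})^{-1}\), expands the Neumann series of the full \(n\times n\) operator \(t\tens{K}\tens{P}\), and bounds \(\Vert t\tens{K}\tens{P}\Vert\le t\Vert\tens{K}\Vert\Vert\tens{P}\Vert=t\lambda_{n}\) by submultiplicativity. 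You instead compress to \(\spc{X}^{\perp}\), expand \((\tens{I}+t\tens{K}_{\spc{X}^{\perp}})^{-1}\) there, and lift term by term using \(\tens{U}_{\spc{X}^{\perp}}(\tens{K}_{\spc{X}^{\perp}})^{i}\tens{U}_{\spc{X}^{\perp}}^{\ast}=\tens{P}(\tens{K}\tens{P})^{i}\); your ``basis-free alternative'' with the factor \(\tens{D}=\tens{I}-\tens{P}+\varsigma^{2}\tens{P}\) is essentially the paper's computation in disguise. The compression route buys a marginally sharper convergence statement: the series converges precisely when \(t\,\rho(\tens{K}_{\spc{X}^{\perp}})<1\), and eigenvalue interlacing gives \(\rho(\tens{K}_{\spc{X}^{\perp}})\le\lambda_{n}\), whereas the paper's norm bound only delivers the (same) sufficient condition \(t\lambda_{n}<1\) without identifying the exact threshold. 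The paper's route is shorter and avoids introducing the basis \(\tens{U}_{\spc{X}^{\perp}}\); all of your supporting claims (\(\ker(\tens{M})=\coker(\tens{M})=\spc{X}\), \(\ind(\tens{M})=1\), \(\tens{M}=\tens{P}\tens{M}\tens{P}\), \(\tens{D}^{-1}=\tens{I}-\tens{P}+\varsigma^{-2}\tens{P}\)) check out.
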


\begin{proof}
    For simplicity, we write \eqref{eq:mixed-cov} as \(\gtens{\Sigma}(\sigma, \varsigma) = \varsigma^2 \gtens{\Sigma}_t\) where \(\gtens{\Sigma}_t \coloneqq (\tens{I} + t \tens{K})\). We express \eqref{eq:M-like} by
    \begin{equation}
        \varsigma^2 \tens{M}(t, \varsigma) = \gtens{\Sigma}_{t}^{-1} - \gtens{\Sigma}_{t}^{-1} \tens{X} \left( \tens{X}^{\ast} \gtens{\Sigma}_{t}^{-1} \tens{X} \right)^{-1} \tens{X}^{\ast} \gtens{\Sigma}_{t}^{-1}. \label{eq:M-varsigma}
    \end{equation}
    By defining \(\tens{N} = \tens{I} - \tens{P} + \gtens{\Sigma}_t \tens{P}\) and using \Cref{thm:M-rep}, we can express \(\varsigma^2 \tens{M}(t, \varsigma)\) in \eqref{eq:M-varsigma} by
    \begin{equation}
        \varsigma^2 \tens{M}(t, \varsigma) = \tens{P} \left( \tens{I} + t \tens{K} \tens{P} \right)^{-1}. \label{eq:M-varsigma-2}
    \end{equation}
    The Neumann series of \((\tens{I} + t\tens{K}\tens{P})^{-1}\) (see \eg \cite[p. 320, Lemma 1]{DAUTRAY-2000}) enables the representation of  \eqref{eq:M-varsigma-2} as per \eqref{eq:M-series}. Convergence of the Neumann series is guaranteed under the condition that \(\Vert t \tens{K} \tens{P} \Vert < 1\). The norm inequality \(\Vert \tens{K} \tens{P} \Vert \leq \Vert \tens{K} \Vert \Vert \tens{P} \Vert\) and the property that the norm of a projection matrix is equal to \(1\), \ie \(\Vert \tens{P} \Vert = 1\), allow us to impose the stronger condition \(t \Vert \tens{K} \Vert < 1\). By utilizing the 2-norm of the symmetric positive definite (SPD) matrix \(\tens{K}\), given as \(\Vert \tens{K} \Vert = \lambda_n\), the proof is complete.
\end{proof}

When \(t\) is small, \ie \(t \lambda_n \ll 1\), it is often useful to represent \(\tens{M}(t, \varsigma)\) through a truncation of series \eqref{eq:M-series}. This leads to an asymptotic analysis of the likelihood function, as shown by \citet{AMELI-2022-a}. Their study demonstrates that this truncated series approach is an efficient approximation method to train the Gaussian process regression.


\section{Numerical Analysis} \label{sec:num}

We provide a numerical analysis of the determinant relations given in \Cref{sec:det} assuming \(\tens{A}\) is non-singular, \(\tens{X} = \tens{Y}\) where \(\tens{X}\) is full rank, and variables are defined on the real domain. We evaluate
\begin{subequations}
\begin{align}
    \logdet(\tens{A}, \tens{X}) \coloneqq& \logdet(\tens{A}) + \logdet(\tens{X}^{\ast} \tens{A}^{-1} \tens{X}) \tag{LD1} \label{eq:LD1} \\
                                =& \logdet(\tens{X}^{\ast} \tens{X}) + \logdet(\tens{N}) \tag{LD2} \label{eq:LD2} \\
                                =& \logdet(\tens{X}^{\ast} \tens{X}) + \logdet(\tens{U}_{\spc{X}^{\perp}}^{\ast} \tens{A} \tens{U}_{\spc{X}^{\perp}}) \tag{LD3} \label{eq:LD3},
\end{align}
\end{subequations}
where \(\logdet(\tens{A}) \coloneqq \log |\det(\tens{A})|\), and recall that \(\tens{N}\) is given in \eqref{eq:def-N2}, and \(\tens{U}_{\spc{X}^{\perp}}\) is defined in \Cref{cor:comp}. The second equality in the above is obtained from \eqref{eq:det-MN} and the third equality in the above can be realized by \eqref{eq:pinv-M-p} and \eqref{eq:pdet}. We have seen the application of \eqref{eq:LD1} in \Cref{sec:app-like} as \(\logdet(\gtens{\Sigma}, \tens{X})\) appears in the logarithm of the likelihood function \eqref{eq:like}. The latter term is the most computationally expensive part of evaluating the aforementioned log-likelihood function (see \eg \cite{AMELI-2022-c}). The relations \eqref{eq:LD2} and \eqref{eq:LD3} provide alternative computational methods for this function.

The goal of our numerical analysis is to compare \eqref{eq:LD1}, \eqref{eq:LD2}, and \eqref{eq:LD3}. Computing with either of these relations have advantages and disadvantages. For instance, \eqref{eq:LD1} requires solving an \(n \times n\) linear system (for inverting \(\tens{A}\)), whereas \eqref{eq:LD2} require solving a smaller \(p \times p\) linear system (for inverting \(\tens{X}^{\ast} \tens{X}\)), assuming \(p < n\). Furthermore, \eqref{eq:LD2} and \eqref{eq:LD3} can take advantage of an orthonormal matrix \(\tens{X}\) to simplify \(\tens{X}^{\ast} \tens{X}\) with the identity matrix. Also, \eqref{eq:LD1} and \eqref{eq:LD3} can take advantage of an SPD matrix \(\tens{A}\) to exploit Cholesky decomposition to either solve a linear system for \(\tens{A}\) in \eqref{eq:LD1}, or to compute \(\logdet\) of \(\tens{A}\) and \(\tens{U}_{\spc{X}^{\perp}}^{\ast} \tens{A} \tens{U}_{\spc{X}^{\perp}}\). Whereas in \eqref{eq:LD2}, \(\tens{N}\) is not necessarily SPD, so we can only employ LU decomposition to compute \(\logdet(\tens{N})\), which costs twice as much as the Cholesky decomposition \citep[Theorem 3.2.1]{GOLUB-1996}.

An efficient implementation of computing \eqref{eq:LD1}, \eqref{eq:LD2}, and \eqref{eq:LD3} are respectively given in \Cref{alg:ld1}, \Cref{alg:ld2}, and \Cref{alg:ld3}. A few remarks on the algorithms are as follows.

\newcommand{\algrow}[3]{%
    \makebox[0.35\hsize][l]{#1}%
    \makebox[0.51\hsize][l]{#2}%
    \makebox[0.1\hsize][l]{#3}\;}

\newcommand{\algrowtwo}[3]{%
    \makebox[0.376\hsize][l]{#1}%
    \makebox[0.4885\hsize][l]{#2}%
    \makebox[0.1\hsize][l]{#3}\;}


\begin{algorithm}[t!]
    \caption{Computing \(\logdet(\tens{A}, \tens{X})\) using \eqref{eq:LD1}}
    \label{alg:ld1}

    \SetAlgoShortEnd
    \DontPrintSemicolon
    \SetSideCommentRight
    \SetAlFnt{\footnotesize}
    \LinesNumberedHidden

    \small

    \SetKwInOut{Input}{Input}
    \SetKwInOut{Output}{Output}

    \Input{%
        \(\tens{A}\) -- \(n \times n\) non-singular matrix,
        \(\tens{X}\) -- \(n \times p\) full column-rank matrix
    }
    \Output{%
        \(\logdet(\tens{A}, \tens{X})\)
    }

    \BlankLine

    \SetKwFunction{ldone}{logdet}
    \SetKwProg{Fn}{Procedure}{}{}
    \Fn{\(\ldone(\tens{A}, \tens{X})\)}{
        \If {\(\tens{A}\) is SPD}{
                
            \algrow{\(\tens{L}_{\tens{A}} \tens{L}_{\tens{A}}^{\ast} \gets \mathbf{A}\)}%
                   {\tri Cholesky decomposition}%
                   {\(\bigO(\frac{1}{3} n^3)\)}

            \algrow{\(\tens{Y} \gets \tens{L}_{\tens{A}}^{-1} \tens{X}\)}%
                   {\tri Solve lower triangular system \(\tens{L}_{\tens{A}} \tens{Y} = \tens{X}\)}%
                   {\(\bigO(\frac{1}{2}n^2 p)\)}

            \ShowLn
            \algrow{\(\tens{W} \gets \tens{Y}^{\ast} \tens{Y}\)}%
                   {\tri Gramian matrix multiplication}%
                   {\(\bigO(\gamma n p^2)\)}
            \label{alg1:l5}

            \algrow{\(\tens{L}_{\tens{W}} \tens{L}_{\tens{W}}^{\ast} \gets \mathbf{W}\)}%
                   {\tri Cholesky decomposition}%
                   {\(\bigO(\frac{1}{3} p^3)\)}

            \ShowLn
            \KwRet \(2 \mathtt{logdet}(\tens{L}_{\tens{A}}) + 2 \mathtt{logdet}(\tens{L}_{\tens{W}})\)\;
            \label{alg1:l7}
        }
        \Else{

            \ShowLn
            \algrow{\(\tens{L}_{\tens{A}}, \tens{U}_{\tens{A}}, \tens{P}_{\tens{A}} \gets \mathtt{lu}(\mathbf{A})\)}%
                   {\tri LU decomposition \(\tens{P}_{\tens{A}}{\tens{A}} = \tens{L}_{\tens{A}} \tens{U}_{\tens{A}}\)}%
                   {\(\bigO(\frac{2}{3} n^3)\)}
            \label{alg1:l9}

            \ShowLn
            \algrow{\(\tens{Z} \gets \tens{L}_{\tens{A}}^{-1} (\tens{P}_{\tens{A}}\tens{X})\)}%
                   {\tri Solve lower triangular system \(\tens{L}_{\tens{A}} \tens{Z} = \tens{P}_{\tens{A}}\tens{X}\)}%
                   {\(\bigO(\frac{1}{2}n^2 p)\)}
            \label{alg1:l10}

            \algrow{\(\tens{Y} \gets \tens{U}_{\tens{A}}^{-1} \tens{Z}\)}%
                   {\tri Solve upper triangular system \(\tens{U}_{\tens{A}} \tens{Y} = \tens{Z}\)}%
                   {\(\bigO(\frac{1}{2}n^2 p)\)}

            \algrow{\(\tens{W} \gets \tens{X}^{\ast} \tens{Y}\)}%
                   {\tri Inner product of columns of \(\tens{X}\) and \(\tens{Y}\)}%
                   {\(\bigO(n p^2)\)}

            \algrow{\(\tens{L}_{\tens{W}}, \tens{U}_{\tens{W}}, \tens{P}_{\tens{W}} \gets \mathtt{lu}(\mathbf{W})\)}%
                   {\tri LU decomposition \(\tens{P}_{\tens{W}} \tens{W} = \tens{L}_{\tens{W}} \tens{U}_{\tens{W}}\)}%
                   {\(\bigO(\frac{2}{3} p^3)\)}

            \KwRet \(\mathtt{logdet}(\tens{U}_{\tens{A}}) + \mathtt{logdet}(\tens{U}_{\tens{W}})\)\;
        }
    }

\end{algorithm}


\begin{algorithm}[t!]
    \caption{Computing \(\logdet(\tens{A}, \tens{X})\) using \eqref{eq:LD2}}
    \label{alg:ld2}

    \SetAlgoShortEnd
    \DontPrintSemicolon
    \SetSideCommentRight
    \SetAlFnt{\footnotesize}
    \LinesNumberedHidden

    \small

    \SetKwInOut{Input}{Input}
    \SetKwInOut{Output}{Output}

    \Input{%
        \(\tens{A}\) -- \(n \times n\) non-singular matrix, 
        \(\tens{X}\) -- \(n \times p\) full column-rank matrix
    }
    \Output{%
        \(\logdet(\tens{A}, \tens{X})\)
    }

    \BlankLine

    \SetKwFunction{ldtwo}{logdet}
    \SetKwProg{Fn}{Procedure}{}{}
    \Fn{\(\ldtwo(\tens{A}, \tens{X})\)}{

        \ShowLn
        \(\tens{C} \gets \tens{A} - \tens{I}\)\; \label{alg2:l2}
        \If {\(\tens{X}\) is orthonormal}{

            \algrow{\(\tens{D} \gets \tens{C} \tens{X}\), \(\tens{E} \gets \tens{D} \tens{X}^{\ast}\)}%
                   {\tri Matrix multiplications}%
                   {\(\bigO(2 n^2 p)\)}
        }
        \Else{

            \algrow{\(\tens{V} \gets \tens{X}^{\ast} \tens{X}\)}%
                   {\tri Gramian matrix multiplication}
                   {\(\bigO(\gamma n p^2)\)}

            \algrow{\(\tens{L}_{\tens{V}} \tens{L}_{\tens{V}}^{\ast} \gets \mathbf{V}\)}%
                   {\tri Cholesky decomposition}%
                   {\(\bigO(\frac{1}{3} p^3)\)}

                   \algrow{\(\tens{Y} \gets \tens{X} (\tens{L}_{\tens{V}}^{-1})^{\ast}\)}%
                    {\tri Solve lower triangular system \(\tens{L}_{\tens{V}} \tens{Y}^{\ast} = \tens{X}^{\ast}\)}%
                    {\(\bigO(\frac{1}{2}n p^2)\)}

            \algrow{\(\tens{D} \gets \tens{C} \tens{Y}\), \(\tens{E} \gets \tens{D} \tens{Y}^{\ast}\)}%
                   {\tri Matrix multiplications}%
                   {\(\bigO(2 n^2 p)\)}
        }

        \ShowLn
        \(\tens{N} \gets \tens{A} - \tens{E}\)\;
        \label{alg2:l10}

        \algrowtwo{\(\tens{L}_{\tens{N}}, \tens{U}_{\tens{N}}, \tens{P}_{\tens{N}} \gets \mathtt{lu}(\tens{N})\)}%
                  {\tri LU decomposition \(\tens{P}_{\tens{N}} \tens{N} = \tens{L}_{\tens{N}} \tens{U}_{\tens{N}}\)}%
                  {\(\bigO(\frac{2}{3} n^3)\)}

        \BlankLine

        \lIf {\(\tens{X}\) is orthonormal}{
            \KwRet \(\mathtt{logdet}(\tens{U}_{\tens{N}})\)
        }
        \lElse{ 
            \KwRet \(2\mathtt{logdet}(\tens{L}_{\tens{V}}) + \mathtt{logdet}(\tens{U}_{\tens{N}})\)
        }
    }
\end{algorithm}

\newcommand{\algrowthree}[3]{%
    \makebox[0.35\hsize][l]{#1}%
    \makebox[0.51\hsize][l]{#2}%
    \makebox[0.1\hsize][l]{#3}\;}

\newcommand{\algrowfour}[3]{%
    \makebox[0.376\hsize][l]{#1}%
    \makebox[0.4885\hsize][l]{#2}%
    \makebox[0.1\hsize][l]{#3}\;}


\begin{algorithm}[ht!]
    \caption{Computing \(\logdet(\tens{A}, \tens{X})\) using \eqref{eq:LD3}}
    \label{alg:ld3}

    \SetAlgoShortEnd
    \DontPrintSemicolon
    \SetSideCommentRight
    \SetAlFnt{\footnotesize}
    \LinesNumberedHidden

    \small

    \SetKwInOut{Input}{Input}
    \SetKwInOut{Output}{Output}

    \Input{%
        \(\tens{A}\) -- \(n \times n\) non-singular matrix, 
        \(\tens{X}\) -- \(n \times p\) full column-rank matrix
    }
    \Output{%
        \(\logdet(\tens{A}, \tens{X})\)
    }

    \BlankLine

    \SetKwFunction{ldtwo}{logdet}
    \SetKwProg{Fn}{Procedure}{}{}
    \Fn{\(\ldtwo(\tens{A}, \tens{X})\)}{

        \lIf {\(\tens{X}\) is orthonormal}{
            \(\tens{X}' \gets \tens{X}\)
        }
        \Else{
            \ShowLn
            \algrowthree{\(\tens{X}' \gets \operatorname{gs}(\tens{X})\)}%
                       {\tri Gram-Schmidt orthonormalization}%
                       {\(\bigO(2 n p^2)\) \label{alg3:gs}}
        }

        \BlankLine

        \algrowfour{\(\tens{U}_{\spc{X}^{\perp}} \gets \operatorname{rand}(n, n-p)\)}%
                   {\tri Initialize a random matrix in \(\mathcal{M}_{n, n-p}(\mathbb{R})\)}%
                   {}

        \ShowLn
        \algrowfour{\(\tens{U}_{\spc{X}^{\perp}} \gets \operatorname{gs}(\tens{U}_{\spc{X}^{\perp}}, \tens{X}')\)}%
                   {\tri Orthonormalize \(\tens{U}_{\spc{X}^{\perp}}\) against \(\spc{X}\)}%
                   {\(\bigO(2 \delta n (n^2-p^2))\) \label{alg3:gs2}}

        \algrowfour{\(\tens{Y} \gets \tens{A} \tens{U}_{\spc{X}^{\perp}}\)}%
                   {\tri Matrix multiplication}%
                   {\(\bigO(n^2 (n-p))\)}

        \algrowfour{\(\tens{Z} \gets \tens{U}_{\spc{X}^{\perp}}^{\ast} \tens{Y}\)}%
                   {\tri \(\gamma=1\) if \(\tens{A}\) is not SPD.}%
                   {\(\bigO(\gamma n (n-p)^2)\)}

        \BlankLine

        \If {\(\tens{A}\) is SPD}{
            \algrowthree{\(\tens{L}_{\tens{Z}} \tens{L}_{\tens{Z}}^{\ast} \gets \mathbf{Z}\)}%
                        {\tri Cholesky decomposition}%
                        {\(\bigO(\frac{1}{3} (n-p)^3)\)}

            \(\alpha \gets 2\logdet(\tens{L}_{\tens{Z}})\)\;
        }
        \Else{
            \algrowthree{\(\tens{L}_{\tens{Z}}, \tens{U}_{\tens{Z}}, \tens{P}_{\tens{Z}} \gets \mathtt{lu}(\tens{Z})\)}%
                        {\tri LU decomposition \(\tens{P}_{\tens{Z}} \tens{Z} = \tens{L}_{\tens{Z}} \tens{U}_{\tens{Z}}\)}%
                        {\(\bigO(\frac{2}{3} (n-p)^3)\)}
            \(\alpha \gets \logdet(\tens{U}_{\tens{Z}})\)\;
        }

        \BlankLine

        \lIf {\(\tens{X}\) is orthonormal}{
            \KwRet \(\alpha\)
        }
        \Else{

            \algrowthree{\(\tens{V} \gets \tens{X}^{\ast} \tens{X}\)}%
                        {\tri Gramian matrix multiplication}%
                        {\(\bigO(\gamma n p^2)\)}

            \algrowthree{\(\tens{L}_{\tens{V}} \tens{L}_{\tens{V}}^{\ast} \gets \mathbf{V}\)}%
                        {\tri Cholesky decomposition}%
                        {\(\bigO(\frac{1}{3} p^3)\)}

            \KwRet \(2\logdet(\tens{L}_{\tens{V}}) + \alpha\)\;
        }
    }
\end{algorithm}

The function \texttt{lu}, such as in \Cref{alg1:l9}, represents the LU decomposition where the matrices \(\tens{L}_{\tens{A}}\) and \(\tens{U}_{\tens{A}}\) therein are lower and upper triangular, respectively. The diagonals of \(\tens{L}_{\tens{A}}\) are normalized to \(1\) such as by the Doolittle algorithm. The LU decomposition is performed with partial pivoting where \(\tens{P}_{\tens{A}}\) is the corresponding permutation matrix, which can be efficiently stored by a one-dimensional array of the indices of row permutations. Also, instead of the matrix multiplication \(\tens{P}_{\tens{A}} \tens{A}\) in \Cref{alg1:l10}, we permute the pointer of the rows of \(\tens{A}\) by \(\mathcal{O}(n)\) operations.

The \(\mathtt{logdet}\) function with only one argument (such as in \Cref{alg1:l7}) performs on the triangular matrices of Cholesky and LU decompositions by \(\mathtt{logdet}(\tens{L}) \gets \sum_{i} \log |L_{ii}|\) where \(L_{ii}\) are the diagonals of \(\tens{L}\). To determine the sign of the determinant, the number of negative diagonals and the parity of the permutation matrix (in the case of LU decomposition) should be considered.

Note that computing \(\tens{N}\) by direct substitution of \(\tens{P}\) in \(\tens{N} = \tens{I} - \tens{P} + \tens{A} \tens{P}\) from \eqref{eq:def-N2} is inefficient since the matrix product \(\tens{A} \tens{P}\) takes \(\mathcal{O}(n^3)\) operations. An efficient computation of \(\tens{N}\) is given from \Cref{alg2:l2} to \Cref{alg2:l10} of \Cref{alg:ld2}, which minimizes the computational cost of matrix multiplication.

The function \(\mathtt{gs}\) in \Cref{alg3:gs} of \Cref{alg:ld3} represents the Gram-Schmidt orthonormalization of \(\tens{X}\). Also, \Cref{alg3:gs2} is the orthonormalization of the randomly generated matrix \(\tens{U}_{\spc{X}^{\perp}}\) against \(\spc{X}\). The complexity of the latter step is obtained by subtracting the complexities of orthonormalizing the \(n \times n\) matrix \([\tens{X}, \tens{U}_{\spc{X}^{\perp}}]\) and the \(n \times p\) matrix \(\tens{X}\) \citep[p. 60]{TREFETHEN-1997}.

The computational complexity of each task is given in the last column of algorithms, which is the leading order of the counts of the multiply-accumulate (MAC) operation consisting of one addition and one multiplication. Note the Gramian matrix multiplications (such as in \Cref{alg1:l5}) may be carried out with half of the full matrix multiplication operation, \ie \(\gamma = \frac{1}{2}\). Efficient implementation of Gramian matrix product is available, for instance, by \texttt{?syrk} routines
in LAPACK, BLAS, cuBLAS (CUDA), and clBLAS (OpenCL) libraries. However, in most higher-level numerical packages, Gramian matrix multiplication is not readily available. In such a case, we can adjust the corresponding computational complexity by setting \(\gamma = 1\). Also, the Boolean variable \(\delta\) in \Cref{alg3:gs2} is set to \(0\) if \(\tens{U}_{\spc{X}^{\perp}}\) is already pre-computed, otherwise it is set to \(1\).

\begin{table}[t!]
    \caption{Comparison of complexities of computing \(\logdet(\tens{A}, \tens{X})\) using \Cref{eq:LD1}, \Cref{eq:LD2}, and \Cref{eq:LD3}}
    \label{table:complexity}
    \centering
    \small
    \begin{tabular}{c|c}
        \toprule
        Function & Complexity per \(n^3\) \\
        \midrule
        \eqref{eq:LD1} & \(
        \begin{cases}
            \frac{1}{3} + \frac{1}{2}\rho + \gamma \rho^2 + \frac{1}{3}\rho^3, & \text{if }\tens{A}\text{ is SPD} \\
            \frac{2}{3} + \rho + \rho^2 + \frac{2}{3}\rho^3, & \text{otherwise} 
        \end{cases}\) \\

        \midrule

        \eqref{eq:LD2} & \(
        \frac{2}{3} + 2 \rho +
        \begin{cases}
            0, & \text{if } \tens{X} \text{ is orthonormal} \\
            (\gamma + \frac{1}{2}) \rho^2 + \frac{1}{3} \rho^3, & \text{otherwise}
        \end{cases}\) \\

        \midrule

        \eqref{eq:LD3} & \(
            1-\rho +
        \begin{cases}
            \gamma (1-\rho)^2 + \frac{1}{3} (1-\rho)^3, & \text{if } \tens{A} \text{ is SPD}\\
            (1-\rho)^2 + \frac{2}{3}(1-\rho)^3, & \text{otherwise}
        \end{cases}
        +
        \begin{cases}
            2\delta(1-\rho^2), & \text{if } \tens{X} \text{ is orthonormal} \\
            2 + \gamma \rho^2 + \frac{1}{3} \rho^3, & \text{otherwise}
        \end{cases}
        \) \\
        \bottomrule
    \end{tabular}
\end{table}

\Cref{table:complexity} summarizes the computational complexities of the three algorithms per \(n^3\) operations as a function of \(\rho \coloneqq p / n\). Note that the computational complexity of \eqref{eq:LD1} is independent of whether \(\tens{X}\) is orthonormal. Also, the cost of computing \eqref{eq:LD2} is independent of whether \(\tens{A}\) is SPD.

\begin{figure}[t!]
    \centering
    \includegraphics[width=0.638\textwidth]{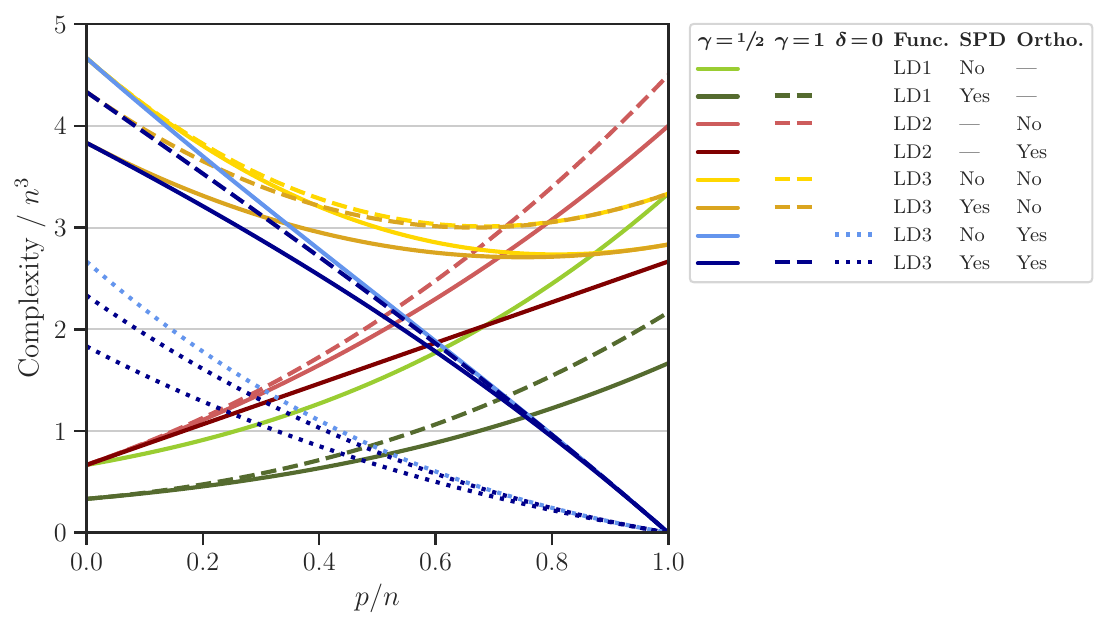}
    \caption{Complexity per \(n^3\) of computing \(\logdet(\tens{A}, \tens{X})\) using \eqref{eq:LD1}, \eqref{eq:LD2}, and \eqref{eq:LD3}. The solid and dashed lines correspond to \(\gamma = \frac{1}{2}\) and \(\gamma=1\), respectively, where \(\delta=1\) in both cases. The case of \(\delta = 0\) (for either of \(\gamma=\frac{1}{2}\) or \(\gamma=1\)) is shown by the dotted curves. The fifth and sixth columns of the plot legend respectively indicate whether \(\tens{A}\) is SPD and \(\tens{X}\) is orthonormal.}
    \label{fig:complexity}
\end{figure}

\Cref{fig:complexity} demonstrates the computational complexities of \Cref{table:complexity}. In general, \eqref{eq:LD3} is the fastest among the relations when \(\tens{X}\) is orthonormal and \(p/n\) is large. Also, when \(\tens{A}\) is SPD, \eqref{eq:LD1} is faster than other relations if \(p/n\) is not large. However, in certain conditions, such as when \(\tens{A}\) is not SPD and \(\tens{X}\) is orthonormal, \eqref{eq:LD2} can be preferred. We investigate this point through a numerical experiment below.

We use an SPD matrix \(\tens{A}\) of the size \(n = 2^9\) obtained from the covariance of an electrocardiogram signal which is further described in \Cref{sec:dataset}. We note, however, that the results of our numerical experiment is not changed if other matrices are used. Also, we create the matrix \(\tens{X}\) of the size \(n \times (n-1)\) by discretizing the trigonometric basis functions of various frequencies in the unit interval. The columns of \(\tens{X}\) are orthonormalized by the Gram-Schmidt process. Throughout the experiment, we used the first \(p\) columns of \(\tens{X}\) and vary \(1 \leq p \leq n-1\). We considered the two cases: \(\tens{A}\) is SPD and \(\tens{A}\) not SPD for the same generated matrix. Also, we considered the case \(\tens{X}\) is orthonormal and the case it is not orthonormal. For each of these four cases, we compute \eqref{eq:LD1}, \eqref{eq:LD2}, and \eqref{eq:LD3} and repeat the numerical experiment multiple times for a better estimation of the measures of the computational cost.

For our numerical experiment, we developed the python package \texttt{detkit} \citep{AMELI-2022-d}, which provides an efficient implementation of \Cref{alg:ld1}, \Cref{alg:ld2}, and \Cref{alg:ld3} in \CC. To properly measure the computational cost, all library components should be compiled with the same configuration. Because of this, our implementation does not depend on external libraries. The computations were carried out on an Intel Xeon E5-2680 v4 processor with a peak performance of \(26.4\) GFLOP/sec per core with level-3 optimization of the GCC compiler. The source code to reproduce the input dataset and output results in this section can be found in the documentation of the software package\footnote{See \url{https://ameli.github.io/detkit} and in particular, the benchmark page therein.}. A minimalistic usage of this package is shown in \Cref{list:detkit}.

\lstinputlisting[
    style=mystyle,
    language=Python,
    caption={A minimalistic usage of \texttt{detkit} package. The function \texttt{loggdet} computes \(\logdet(\tens{A}, \tens{X})\).},
    label={list:detkit},
    float=tp]{detkit_mwe.py}

\begin{figure}[t!]
    \centering
    \includegraphics[width=\textwidth]{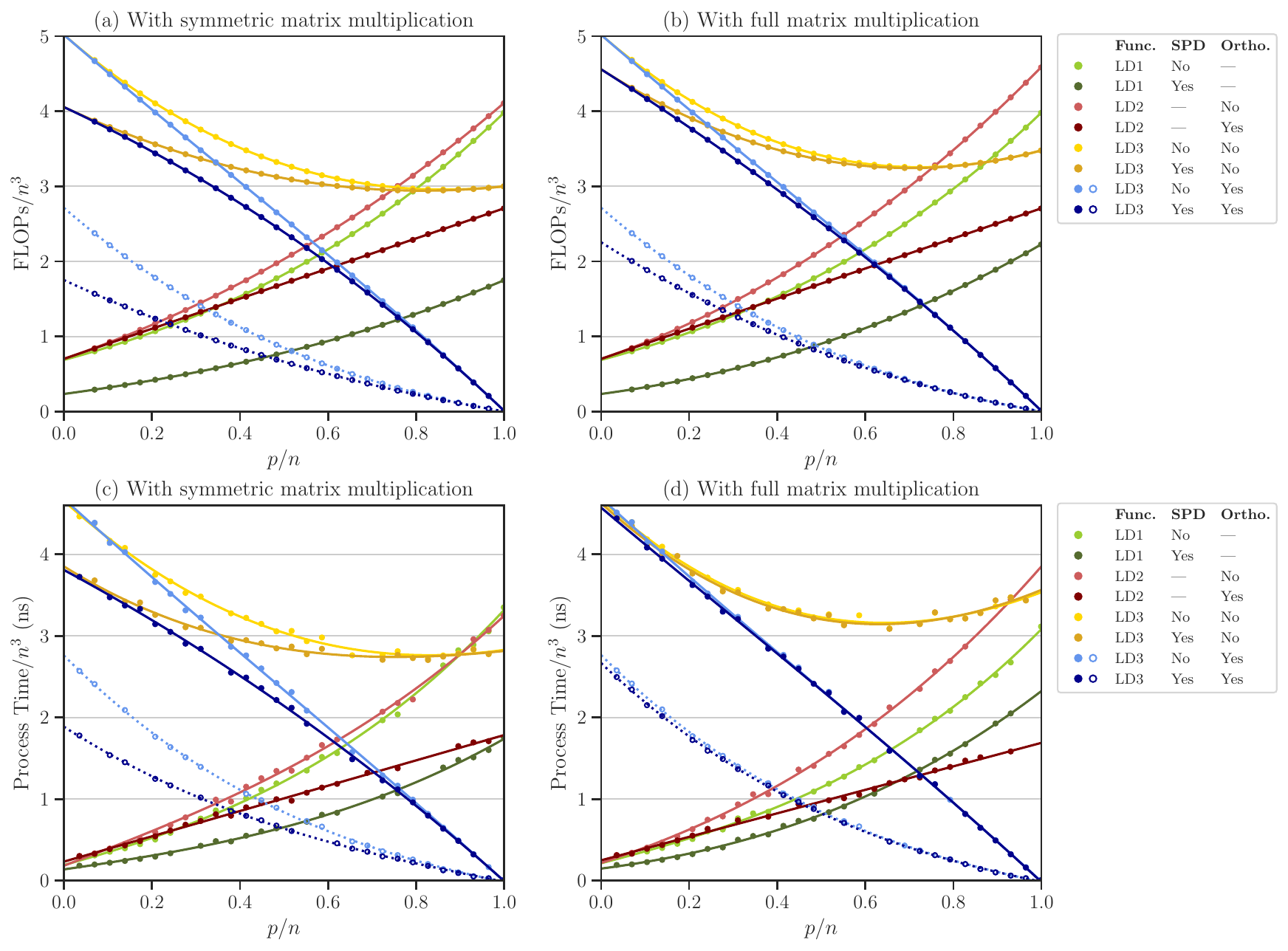}
    \caption{FLOPs (first row) and processing time (second row) of computing \(\logdet(\tens{A}, \tens{X})\) for \(n = 2^9\) using \eqref{eq:LD1}, \eqref{eq:LD2}, and \eqref{eq:LD3}. On the left and right columns of the plot, respectively, the symmetric and full matrix multiplication for computing Gram matrices are used, which are comparable to \(\gamma = \frac{1}{2}\) and \(\gamma=1\) in \Cref{fig:complexity}. The hollow circles correspond to the case when \(\tens{U}_{\spc{X}^{\perp}}\) is already pre-computed and the cost of its computation is excluded, which are comparable to \(\delta=0\) in \Cref{fig:complexity}. The solid and dotted curves are the third order polynomial fit to the data. The fourth and fifth columns of the plot legends respectively indicate whether \(\tens{A}\) is SPD and \(\tens{X}\) is orthonormal.}
    \label{fig:FLOPs}
\end{figure}

For the comparison, we measured the floating-point operations (FLOP) of the computational process. For consistency with MAC operations described earlier, we define one FLOP as a single fused multiply-add (FMA) operation on the processor. Unfortunately, there is no unique way of measuring FLOPs on modern processors. To measure FLOPs, we counted the ``retired hardware instruction events''\footnote{Using the \CC\ API for \texttt{perf} tool, a performance counter for Linux kernel.} during the runtime divided by the same instruction count it takes for a single FMA task on that processor. Instruction counts not only depends on the processor but also the compiler configurations and the matrix size. To obtain an instruction count of a single FMA task that is independent of the matrix size, \(n\), we varied \(n\) to estimate the asymptote of the instruction counts per \(n^3\) as \(n \to \infty\) by fitting this value on a homographic function of \(n\).

The first row of \Cref{fig:FLOPs} shows the FLOPs\(/n^3\) for computing \eqref{eq:LD1}, \eqref{eq:LD2}, and \eqref{eq:LD3}. With a slight adjustment of the scale of these curves in this figure, the experimental FLOPs are similar to the analytical functions in \Cref{fig:complexity}. We observe when \(\tens{X}\) is orthonormal, the curve corresponding to \eqref{eq:LD2} in the figure is linear in \(\rho\) while the other curves demonstrate a third-order polynomial, as expected from \Cref{table:complexity}. Also, we observe that when \(\tens{A}\) is SPD, \eqref{eq:LD1} take fewer FLOPs compared to \eqref{eq:LD2}. On the contrary, when \(\tens{A}\) is not SPD and \(\tens{X}\) is orthonormal, the advantage of \eqref{eq:LD2} over \eqref{eq:LD1} is notable as \(\rho\) increases. 

We note that while FLOPs is a useful measure for individual tasks in the algorithms (such as matrix multiplication or matrix decomposition), the overall FLOPs of the combination of tasks may not reflect the efficiency of the algorithm because tasks with similar complexity might perform differently. For instance, computing \(\tens{X}^{\ast} \tens{X}\) is a memory-bound operation and can be enhanced significantly by the vectorization on the processor if the matrices are stored in row-major ordering. But, this technique may not benefit another task with similar complexity.

As another measure of efficiency, we show the processing time of the numerical experiments in the second row of \Cref{fig:FLOPs}, which is mostly similar to the FLOPs, except when \(\tens{A}\) is SPD and \(\tens{X}\) is orthonormal. In particular, in the second column of the figure, \eqref{eq:LD2} becomes advantageous over \eqref{eq:LD1} at approximately \(p/n > 0.7\). We also observed similar results with smaller and larger \(n\). Also, on both left and right columns of the figure, \eqref{eq:LD3} is preferred at approximately \(p/n > 0.7\). Note that if \(\tens{U}_{\spc{X}^{\perp}}\) is already pre-computed, \eqref{eq:LD3} is even more efficient compared to \eqref{eq:LD1} at approximately \(p/n > 0.5\), which can be observed by the hollow circles in \Cref{fig:FLOPs}.

The above results for SPD matrices are applicable to a practical implementation of Gaussian process regression, particularly when \(p/n\) is large. Namely, using \eqref{eq:LD2} (if \(\gamma=1\)) or \eqref{eq:LD3} to compute \(\logdet(\gtens{\Sigma}, \tens{X})\) is advantageous, provided that \(\tens{X}\) is orthonormalized in advance of training the Gaussian process. Furthermore, during the optimization of the likelihood function, the design matrix \(\tens{X}\) usually remains unchanged and only the hyperparameters of \(\gtens{\Sigma}(\vect{\theta})\) vary. Thus, by pre-computing \(\tens{U}_{\spc{X}^{\perp}}\), \eqref{eq:LD3} can be far more efficient. We note that the one-time cost of the orthonormalization can be greatly outweighed by the gain achieved over iterative evaluations of log-likelihood function during the training of the Gaussian process.


\section{Conclusion} \label{sec:conclusion}

We studied a matrix structure that arises in the Woodbury matrix identity when it becomes singular and the Woodbury identity no longer holds. Within the framework of generalized inverses, we presented identities for such matrix formulation that have direct applications to the Gaussian process regression. Namely, we showed a case of likelihood function can be recognized as a normal distribution on a subspace and we defined its precision matrix by the Bott-Duffin inverse of the covariance matrix. Also, these identities enabled us to express the precision matrix of mixed models by a power series that is useful for fast training the Gaussian process \citep{AMELI-2022-a}. We presented numerical analysis and efficient computation of the pseudo-determinant identities that are implemented in the python package \texttt{detkit} \citep{AMELI-2022-d}. Our results show that, under certain conditions, the identities presented in this work can offer an advantage in computing the log-likelihood function of a Gaussian process.

\vspace{0.3cm}
\noindent\textbf{Acknowledgments.}
We acknowledge support from the NSF, Award No.~1520825, and the AHA, Award No. 18EIA33900046. This research used the Savio computational cluster resource provided by the Berkeley Research Computing program at the University of California, Berkeley.


\appendix

\begin{appendices}


\section{Proofs} \label{sec:pf}


\setcounter{equation}{0}
\renewcommand\theequation{A.\arabic{equation}}

\subsection{Proofs of \texorpdfstring{\Cref{sec:special}}{Section 3.1}} \label{sec:pf-special}

\begin{proof}[Proof of \Cref{lem:F}]
    The condition \ref{cond:b} is the orthogonal complement of \ref{cond:a}. Also, \ref{cond:d}\(\Leftrightarrow\)\ref{cond:e}\(\Leftrightarrow\)\ref{cond:f} can be obtained from \ref{cond:a}\(\Leftrightarrow\)\ref{cond:b}\(\Leftrightarrow\)\ref{cond:c} by swapping \(\spc{X}\) and \(\tens{X}\) with \(\spc{Y}\) and \(\tens{Y}\), replacing \(\tens{A}\) and \(\tens{F}\) with \(\tens{A}^{\ast}\) and \(\tens{F}^{\ast}\), and using \(\ker(\tens{F}^{\ast}) = \image(\tens{F})^{\perp}\) and \(\ker(\tens{Y}) = \image(\tens{Y}^{\ast})^{\perp}\). We show \ref{cond:b}\(\Leftrightarrow\)\ref{cond:c} and \ref{cond:c}\(\Leftrightarrow\)\ref{cond:f} if \(\dim(\spc{X}) = \dim(\spc{Y})\). 

    \begin{enumerate}[leftmargin=*,align=left,font=\bfseries]
        \item[\ref{cond:b}\(\Rightarrow\)\ref{cond:c}:] From the definition of \(\tens{F}\), we readily know \(\ker(\tens{X}) \subseteq \ker(\tens{F})\). We show the opposite, \ie \(\ker(\tens{F}) \subseteq \ker(\tens{X})\). Suppose \(\vect{v} \in \ker(\tens{F})\), that is \(\tens{Y}^{\ast} \tens{A}^{\dagger} \tens{X} \vect{v} = \vect{0}\). This implies
    \begin{equation}
        \tens{X} \vect{v} \in \ker(\tens{Y}^{\ast} \tens{A}^{\dagger}) = \image(\tens{A}^{\dagger \ast} \tens{Y})^{\perp} = (\tens{A}^{\dagger \ast} \spc{Y})^{\perp}. \label{eq:Xv-1}
    \end{equation}
    But \(\tens{X} \vect{v} \in \spc{X}\). This together with \eqref{eq:Xv-1} and \ref{cond:b} imply \(\tens{X} \vect{v} = \vect{0}\), meaning \(\vect{v} \in \ker(\tens{X})\). Since \(\vect{v} \in \ker(\tens{F})\) implies \(\vect{v} \in \ker(\tens{X})\), we deduce \(\ker(\tens{F}) \subseteq \ker(\tens{X})\), which concludes \ref{cond:c}.

\item[\ref{cond:c}\(\Rightarrow\)\ref{cond:b}:] Suppose in contrary that \(\mathcal{W} \coloneqq (\tens{A}^{\dagger \ast} \spc{Y})^{\perp} \cap \spc{X} \neq \{\tens{0}\}\). Choose a non-zero \(\vect{w} \in \mathcal{W}\). Since \(\vect{w} \in \spc{X}\), there exists \(\vect{v} \notin \ker(\tens{X})\) such that \(\vect{w} = \tens{X}\vect{v}\). But we also have \(\vect{w} \in (\tens{A}^{\dagger \ast} \spc{Y})^{\perp}\), so \(\tens{F} \vect{v} = \tens{Y}^{\ast} \tens{A}^{\dagger} \vect{w} = \vect{0}\), meaning \(\vect{v} \in \ker(\tens{F})\). This implies \(\ker(\tens{F}) \neq \ker(\tens{X})\) and contradicts with hypothesis \ref{cond:c}. Hence, it must be that \(\mathcal{W} = \{\vect{0}\}\) and concludes \ref{cond:b}.

\item[\ref{cond:c}\(\Leftrightarrow\)\ref{cond:f}] Using \citep[Theorem 1.1.3]{WANG-2018}, the condition \ref{cond:c} is equivalent to \(\rank(\tens{F}) = \rank(\tens{X})\), and \ref{cond:f} is equivalent to \(\rank(\tens{F}) = \rank(\tens{Y})\). Also, the hypothesis \(\dim(\spc{X}) = \dim(\spc{Y})\) can be expressed as \(\rank(\tens{X}) = \rank(\tens{Y})\), which implies \ref{cond:c}\(\Leftrightarrow\)\ref{cond:f} and vice versa. \qedhere
    \end{enumerate}
\end{proof}

\begin{lemma} \label{lem:Q12}
    Suppose \(\tens{F}\) is defined as in \Cref{lem:F}. Define
    \begin{equation}
        \tens{Q}_1 \coloneqq \tens{I} - \tens{X} \tens{F}^{\dagger} \tens{Y}^{\ast} \tens{A}^{\dagger},
        \quad \text{and} \quad
        \tens{Q}_2 \coloneqq \tens{I} - \tens{A}^{\dagger} \tens{X} \tens{F}^{\dagger} \tens{Y}^{\ast}.
        \label{eq:def-Q12}
    \end{equation}
    Then, \(\tens{Q}_1\) and \(\tens{Q}_2\) are projection matrices.
\end{lemma}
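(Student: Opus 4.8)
The plan is to verify directly that each $\tens{Q}_i$ is idempotent, i.e.\ $\tens{Q}_i^2 = \tens{Q}_i$, which is the defining property of a (generally oblique) projection matrix; note that $\tens{Q}_1$ and $\tens{Q}_2$ are not Hermitian in general, so only idempotency is claimed. The single ingredient needed is the observation that the factor that appears in the middle when one squares $\tens{Q}_i$ is precisely $\tens{F} = \tens{Y}^{\ast} \tens{A}^{\dagger} \tens{X}$, together with the second Penrose condition $\tens{F}^{\dagger} \tens{F} \tens{F}^{\dagger} = \tens{F}^{\dagger}$ from \eqref{eq:penrose}.

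Concretely, for $\tens{Q}_1$ I would expand
\begin{equation*}
    \tens{Q}_1^2 = \tens{I} - 2\, \tens{X} \tens{F}^{\dagger} \tens{Y}^{\ast} \tens{A}^{\dagger} + \tens{X} \tens{F}^{\dagger} \left( \tens{Y}^{\ast} \tens{A}^{\dagger} \tens{X} \right) \tens{F}^{\dagger} \tens{Y}^{\ast} \tens{A}^{\dagger},
\end{equation*}
then substitute $\tens{Y}^{\ast} \tens{A}^{\dagger} \tens{X} = \tens{F}$ and collapse $\tens{F}^{\dagger} \tens{F} \tens{F}^{\dagger} = \tens{F}^{\dagger}$, which leaves $\tens{Q}_1^2 = \tens{I} - \tens{X} \tens{F}^{\dagger} \tens{Y}^{\ast} \tens{A}^{\dagger} = \tens{Q}_1$. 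The argument for $\tens{Q}_2$ is identical: squaring $\tens{Q}_2 = \tens{I} - \tens{A}^{\dagger} \tens{X} \tens{F}^{\dagger} \tens{Y}^{\ast}$ produces the middle factor $\tens{Y}^{\ast} \tens{A}^{\dagger} \tens{X} = \tens{F}$, and $\tens{F}^{\dagger} \tens{F} \tens{F}^{\dagger} = \tens{F}^{\dagger}$ again reduces the quadratic term to the linear one, giving $\tens{Q}_2^2 = \tens{Q}_2$.

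There is no real obstacle here; the lemma is a one-line computation once the factor $\tens{F}$ is recognized. It may be worth recording alongside the proof that $\tens{M} = \tens{Q}_2 \tens{A}^{\dagger} = \tens{A}^{\dagger} \tens{Q}_1$, since this is how these projectors enter the subsequent development (cf.\ \Cref{prop:M-proj}), but that identity is not needed to establish idempotency.
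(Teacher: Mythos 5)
Your proof is correct and is essentially the paper's own argument: the paper likewise establishes idempotency of \(\tens{Q}_1\) and \(\tens{Q}_2\) by recognizing the middle factor \(\tens{Y}^{\ast}\tens{A}^{\dagger}\tens{X} = \tens{F}\) upon squaring and collapsing it with \(\tens{F}^{\dagger}\tens{F}\tens{F}^{\dagger} = \tens{F}^{\dagger}\). No differences worth noting.
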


\begin{proof}
    Using \(\tens{F}^{\dagger} \tens{F} \tens{F}^{\dagger} = \tens{F}^{\dagger}\), we can show \(\tens{Q}_i^2 = \tens{Q}_i\), \(i=1,2\), so \(\tens{Q}_i\) are idempotent.
\end{proof}

\begin{proof}[Proof of \Cref{prop:M-proj}]
    Observe that \(\tens{M} = \tens{A}^{\dagger} \tens{Q}_1 = \tens{Q}_2 \tens{A}^{\dagger}\) where \(\tens{Q}_1\) and \(\tens{Q}_2\) are defined in \Cref{lem:Q12}. We show \(\tens{Q}_1 = \tens{P}_{(\tens{A}^{\dagger \ast} \spc{Y})^{\perp}, \spc{X}}\) if \eqref{eq:A-yx-1} holds, and \(\tens{Q}_2 = \tens{P}_{\spc{Y}^{\perp}, \tens{A}^{\dagger} \spc{X}}\) if \eqref{eq:A-yx-2} holds.

    \begin{enumerate}[leftmargin=*,align=left,label*=\emph{Step (\roman*).},ref=step (\roman*),wide]
        \item\label{step:Q1} We have \(\ker(\tens{Q}_1) = \image(\tens{I} - \tens{Q}_1)\). From the expression \(\tens{I} - \tens{Q}_1 = \tens{X} \tens{F}^{\dagger} \tens{Y}^{\ast} \tens{A}^{\dagger}\), we readily know \(\image(\tens{I} - \tens{Q}_1) \subseteq \image(\tens{X})\), which can also be written as \(\ker(\tens{Q}_1) \subseteq \spc{X}\).
        \item\label{step:Q2} We now show \(\spc{X} \subseteq \ker(\tens{Q}_1)\). To this end, we compute \(\tens{Q}_1 \tens{X}\) as follows. Using \((\tens{F}^{\dagger} \tens{F})^{\ast} = \tens{F}^{\dagger} \tens{F}\) from the fourth condition of \eqref{eq:penrose}, we have \((\tens{Q}_1 \tens{X})^{\ast} = \tens{X}^{\ast} - (\tens{F}^{\dagger} \tens{F}) \tens{X}^{\ast}\). Note that \(\tens{F}^{\dagger} \tens{F} = \tens{P}_{\image(\tens{F}^{\ast})}\) is a projection matrix \citep[Theorem 1.1.3]{WANG-2018}. Also, recall from \Cref{lem:F} that \(\ker(\tens{F}) = \ker(\tens{X})\), which implies \(\image(\tens{F}^{\ast}) = \image(\tens{X}^{\ast})\). Hence, \(\tens{P}_{\image(\tens{F}^{\ast})} \tens{X}^{\ast} = \tens{P}_{\image(\tens{X}^{\ast})} \tens{X}^{\ast} = \tens{X}^{\ast}\), yielding \(\tens{Q}_1 \tens{X} = \tens{0}\), which means \(\spc{X} \subseteq \ker(\tens{Q}_1)\). This together with the results of \ref{step:Q1} implies \(\ker(\tens{Q}_1) = \spc{X}\). 
    \end{enumerate}

    We now find \(\image(\tens{Q}_1) = \ker(\tens{Q}_1^{\ast})^{\perp}\). Similar to \ref{step:Q1} and \ref{step:Q2} in the above, we can show that \(\ker(\tens{Q}_1^{\ast}) = \image(\tens{A}^{\dagger \ast} \tens{Y}) = \tens{A}^{\dagger \ast} \spc{Y}\) (we omit its proof for brevity). This implies \(\image(\tens{Q}_1) = (\tens{A}^{\dagger \ast} \spc{Y})^{\perp}\) and concludes \eqref{eq:M-P1}.

    The image and kernel of \(\tens{Q}_2\) can be readily obtained from those of \(\tens{Q}_1\) as follows. From \(\tens{Q}_1 = \tens{P}_{(\tens{A}^{\dagger \ast} \spc{Y})^{\perp}, \spc{X}}\) we have \(\tens{Q}_1^{\ast} = \tens{P}_{\spc{X}^{\perp}, \tens{A}^{\dagger \ast} \spc{Y}}\). Observe that by swapping \(\tens{X}\) and \(\tens{Y}\) and replacing \(\tens{A}\) with \(\tens{A}^{\ast}\), we obtain \(\tens{Q}_2\) from \(\tens{Q}_1^{\ast}\). Hence, \(\tens{Q}_2 = \tens{P}_{\spc{Y}^{\perp}, \tens{A}^{\dagger} \spc{X}}\), which concludes \eqref{eq:M-P2}.
\end{proof}

\begin{lemma} \label{lem:prod-ker}
    Let \(\tens{G} \coloneqq \tens{P}_2 \tens{P}_1\) where \(\tens{P}_1\) and \(\tens{P}_2\) are projection matrices. Then,
    \begin{subequations}
        \begin{align}
            \ker(\tens{G}) &= \ker(\tens{P}_1) \oplus (\ker(\tens{P}_2) \cap \image(\tens{P}_1)), \label{eq:ker-G}\\
            \coker(\tens{G}) &= \coker(\tens{P}_2) \oplus (\coker(\tens{P}_1) \cap \coimage(\tens{P}_2)). \label{eq:coker-G}
        \end{align}
    \end{subequations}
\end{lemma}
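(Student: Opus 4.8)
The plan is to prove the kernel identity \eqref{eq:ker-G} directly from idempotency, and then deduce the cokernel identity \eqref{eq:coker-G} at no extra cost by applying \eqref{eq:ker-G} to the conjugate transpose $\tens{G}^{\ast}$.

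First I would characterise $\ker(\tens{G})$. A vector $\vect{v}$ lies in $\ker(\tens{G})$ exactly when $\tens{P}_1 \vect{v} \in \ker(\tens{P}_2)$; since $\tens{P}_1 \vect{v}$ is always in $\image(\tens{P}_1)$, this is the statement $\tens{P}_1 \vect{v} \in \ker(\tens{P}_2) \cap \image(\tens{P}_1)$. Splitting $\vect{v} = (\tens{I} - \tens{P}_1)\vect{v} + \tens{P}_1 \vect{v}$ and using $\tens{P}_1 (\tens{I} - \tens{P}_1) = \tens{0}$ (so that the first summand lies in $\ker(\tens{P}_1)$) gives the inclusion $\ker(\tens{G}) \subseteq \ker(\tens{P}_1) + (\ker(\tens{P}_2) \cap \image(\tens{P}_1))$. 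The reverse inclusion is immediate: any $\vect{v} \in \ker(\tens{P}_1)$ is annihilated by $\tens{G}$, and any $\vect{v} \in \ker(\tens{P}_2) \cap \image(\tens{P}_1)$ satisfies $\tens{P}_1 \vect{v} = \vect{v}$, hence $\tens{G}\vect{v} = \tens{P}_2 \vect{v} = \vect{0}$. For directness I would invoke the elementary fact that $\ker(\tens{P}_1) \cap \image(\tens{P}_1) = \{\vect{0}\}$ for any idempotent $\tens{P}_1$; intersecting further with $\ker(\tens{P}_2)$ keeps this trivial, so the sum in \eqref{eq:ker-G} is direct.

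For \eqref{eq:coker-G}, note that the conjugate transpose of an idempotent is idempotent, so $\tens{G}^{\ast} = \tens{P}_1^{\ast} \tens{P}_2^{\ast}$ has exactly the form $\tens{P}_2' \tens{P}_1'$ to which \eqref{eq:ker-G} applies, with $\tens{P}_1' = \tens{P}_2^{\ast}$ and $\tens{P}_2' = \tens{P}_1^{\ast}$. This yields $\ker(\tens{G}^{\ast}) = \ker(\tens{P}_2^{\ast}) \oplus (\ker(\tens{P}_1^{\ast}) \cap \image(\tens{P}_2^{\ast}))$, and translating through the identifications $\coker(\tens{T}) = \ker(\tens{T}^{\ast})$ and $\coimage(\tens{T}) = \image(\tens{T}^{\ast})$ recorded in \Cref{sec:notation} turns this into \eqref{eq:coker-G}. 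I do not anticipate a genuine obstacle; the only place demanding care is keeping the roles of $\tens{P}_1$ and $\tens{P}_2$ straight when passing to the adjoint, since the order of composition reverses, and invoking the $\coker$/$\coimage$ dictionary consistently rather than carrying around orthogonal complements throughout.
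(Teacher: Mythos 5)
Your proof is correct and follows essentially the same route as the paper's: characterize \(\ker(\tens{G})\) via the splitting \(\vect{v} = (\tens{I}-\tens{P}_1)\vect{v} + \tens{P}_1\vect{v}\) and idempotency, then obtain \eqref{eq:coker-G} by applying \eqref{eq:ker-G} to \(\tens{G}^{\ast} = \tens{P}_1^{\ast}\tens{P}_2^{\ast}\) together with the identifications \(\coker(\tens{T}) = \ker(\tens{T}^{\ast})\) and \(\coimage(\tens{T}) = \image(\tens{T}^{\ast})\). Your write-up is somewhat more explicit than the paper's (both inclusions and the directness of the sum are spelled out), but the underlying argument is the same.
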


\begin{proof}
    Let \(\vect{v} \in \ker(\tens{G}) =: \mathcal{N}\), so \(\tens{P}_2 \tens{P}_1 \vect{v} = \tens{0}\). This implies \(\tens{P}_1\vect{v} \in \ker(\tens{P}_2) \cap \image(\tens{P}_1) =: \mathcal{N}_2\), which can be written as \(\tens{P}_1 \mathcal{N} = \mathcal{N}_2\). Since \(\tens{P}_1\) is a projection along \(\ker(\tens{P}_1) =: \mathcal{N}_1\), we can construct \(\mathcal{N}\) by \(\mathcal{N}_1 \oplus \mathcal{N}_2\), which proves \eqref{eq:ker-G}. Also, \eqref{eq:coker-G} can be shown by \eqref{eq:ker-G} using \(\coker(\tens{G}) = \ker(\tens{G}^{\ast})\).
\end{proof}

\begin{proof}[Proof of \Cref{thm:M-ker-coker}]
    We first find \(\hat{\spc{X}}\). By using \(\tens{A} = \tens{M}^{(1)}\) from \Cref{rem:inv12} in the identity \(\ker(\tens{M}) = \ker(\tens{M}^{(1)} \tens{M})\) \citep[Equation 10 of Theorem 1.2.4]{WANG-2018}, we have \(\ker(\tens{M}) = \ker(\tens{A} \tens{M})\). Also, \(\tens{A} \tens{M} = \tens{A} \tens{A}^{\dagger} \tens{Q}_1\) where \(\tens{Q}_1\) is defined in \eqref{eq:def-Q12}. Let \(\spc{A} \coloneqq \image(\tens{A})\) and note that \(\tens{A} \tens{A}^{\dagger} = \tens{P}_{\spc{A}}\) is a projection matrix with the kernel \(\spc{A}^{\perp}\). From \eqref{eq:M-P1} we have
    \begin{equation}
        \tens{A} \tens{M} = \tens{P}_{\spc{A}} \tens{P}_{(\tens{A}^{\dagger \ast} \spc{Y})^{\perp}, \spc{X}}, \label{eq:comp-1}
    \end{equation}
    which is the composition of two projection matrices. From \eqref{eq:ker-G}, we have
    \begin{equation}
        \hat{\spc{X}} = \spc{X} \oplus (\spc{A}^{\perp} \cap (\tens{A}^{\dagger \ast} \spc{Y})^{\perp}).
    \end{equation}
    Observing \(\spc{A}^{\perp} = \coker(\tens{A})\) concludes \eqref{eq:hat-X}.

    Finding \(\hat{\spc{Y}}\) is similar. By using \(\tens{A} = \tens{M}^{(1)}\) in the identity \(\image(\tens{M}) = \image(\tens{M} \tens{M}^{(1)})\) \citep[Equation 10 of Theorem 1.2.4]{WANG-2018}, we have \(\image(\tens{M}) = \image(\tens{M} \tens{A})\), which implies \(\coker(\tens{M}) = \coker(\tens{M} \tens{A})\). We have \(\tens{M} \tens{A} = \tens{Q}_2 \tens{A}^{\dagger} \tens{A}\) where \(\tens{Q}_2\) is defined in \eqref{eq:def-Q12}. Let \(\spc{A}^{\ast} \coloneqq \image(\tens{A}^{\ast})\). Note that \(\tens{A}^{\dagger} \tens{A} = \tens{P}_{\spc{A}^{\ast}}\) and its cokernel is \(\spc{A}^{\ast \perp}\). From \eqref{eq:M-P2} we have
    \begin{equation}
        \tens{M} \tens{A} = \tens{P}_{\spc{Y}^{\perp}, \tens{A}^{\dagger}\spc{X}} \tens{P}_{\spc{A}^{\ast}},
    \end{equation}
    which is the composition of two projection matrices. From \eqref{eq:coker-G}, we have
    \begin{equation}
        \hat{\spc{Y}} = \spc{Y} \oplus (\spc{A}^{\ast \perp} \cap (\tens{A}^{\dagger} \spc{X})^{\perp}).
    \end{equation}
    But \(\spc{A}^{\ast \perp} = \ker(\tens{A})\), which concludes \eqref{eq:hat-Y}.
\end{proof}

\begin{proof}[Proof of \Cref{cor:XeqY}]
    If \(\tens{A}\) is non-singular, we have \(\ker(\tens{A}) = \coker(\tens{A}) = \{\vect{0}\}\), so \eqref{eq:hat-X} and \eqref{eq:hat-Y} imply \(\hat{\spc{X}} = \spc{X}\) and \(\hat{\spc{Y}} = \spc{Y}\), respectively.
\end{proof}

%

\begin{proof}[Proof of \Cref{prop:A-XPD}]
    If \(\tens{A}^{\dagger}\) is \(\spc{X}\)-PD, it implies \(\vect{x}^{\ast} \tens{A}^{\dagger} \vect{x} > 0\) for all \(\vect{x} \in \spc{X} \setminus \{\vect{0}\}\), which means \(\tens{A}^{\dagger} \spc{X} \cap \spc{X}^{\perp} = \{\tens{0}\}\). Also, since \(\tens{A}^{\dagger} \vect{x} \neq \vect{0}\) in \(\spc{X} \setminus \{\vect{0}\}\), the map \(\tens{A}^{\dagger}\) is a bijection on \(\spc{X}\), so \(\dim(\tens{A}^{\dagger} \spc{X}) = \dim(\spc{X})\), and we conclude \(\tens{A}^{\dagger} \spc{X} \oplus \spc{X}^{\perp} = \mathbb{C}^n\). Therefore, \(\tens{A}\) satisfies \eqref{eq:A-yx-2}, and consequently \eqref{eq:A-yx-1}.
\end{proof}

\begin{proof}[Proof of \Cref{prop:M-singular}]
    The relation \(\tens{M} = \tens{A}^{\dagger}\) can be written as \(\tens{A}^{\dagger} \tens{X} \tens{F}^{\dagger} \tens{Y}^{\ast} \tens{A}^{\dagger} = \tens{0}\). If \(\tens{A}^{\dagger \ast} \spc{Y} \perp \spc{X}\) or \(\spc{Y} \perp \tens{A}^{\dagger} \spc{X}\), then \(\tens{F} = \tens{Y}^{\ast} \tens{A}^{\dagger} \tens{X} = \tens{0}\), which proves the sufficient condition. To show the necessary condition, suppose \(\tens{A}^{\dagger} \tens{X} \tens{F}^{\dagger} \tens{Y}^{\ast} \tens{A}^{\dagger} = \tens{0}\). One possibility to achieve this is if \(\image(\tens{Y}^{\ast} \tens{A}^{\dagger}) \subseteq \ker(\tens{F}^{\dagger})\), or \(\image(\tens{F}^{\dagger}) \subseteq \ker(\tens{A}^{\dagger} \tens{X})\). However,
    \begin{align*}
        &\ker(\tens{F}^{\dagger}) = \ker(\tens{F}^{\ast}) \perp \image(\tens{F}) \subseteq \image(\tens{Y}^{\ast} \tens{A}^{\dagger}), \\
        &\image(\tens{F}^{\dagger}) = \image(\tens{F}^{\ast}) \perp \ker(\tens{F}) \supseteq \ker(\tens{A}^{\dagger} \tens{X}).
    \end{align*}
    Hence, \(\image(\tens{Y}^{\ast} \tens{A}^{\dagger}) \not\subseteq \ker(\tens{F}^{\dagger})\) and \(\image(\tens{F}^{\dagger}) \not\subseteq \ker(\tens{A}^{\dagger} \tens{X})\), which rules out the above possibilities. The remaining feasible case is if \(\tens{A}^{\dagger} \tens{X}\), or \(\tens{F}\), or \(\tens{Y}^{\ast} \tens{A}^{\dagger}\) is null, implying \(\tens{A}^{\dagger \ast} \spc{Y} \perp \spc{X}\) and \(\spc{Y} \perp \tens{A}^{\dagger} \spc{X}\).
\end{proof}

\begin{proof}[Proof of \Cref{prop:XY-share}]
    From the proof of \Cref{prop:M-singular} we know \(\tens{F} \neq \tens{0}\) if \(\tens{A}^{\dagger \ast} \spc{Y} \not\perp \spc{X}\). We calculate \(\tens{M} \tens{X} = \tens{A}^{\dagger} \tens{X} (\tens{I} - \tens{F}^{\dagger} \tens{F})\). If \(\tens{F}\) has full column-rank, then \(\tens{F}^{\dagger} \tens{F} = \tens{I}\), so \(\tens{M} \tens{X} = \tens{0}\), which implies \(\spc{X} \subseteq \hat{\spc{X}}\). If \(\tens{F}\) does not have full column-rank, then \(\tens{F}^{\dagger} \tens{F}\) is a projection matrix, so \(\image(\tens{I} - \tens{F}^{\dagger} \tens{F}) = \ker(\tens{F}^{\dagger} \tens{F}) \subsetneq \mathbb{C}^p\). Also, \(\ker(\tens{X}) \subseteq \ker(\tens{F}^{\dagger} \tens{F})\) by the definition of \(\tens{F}\). Thus, \(\image(\tens{X}(\tens{I} - \tens{F}^{\dagger} \tens{F})) \subsetneq \image(\tens{X})\), namely \(\rank(\tens{X}(\tens{I} - \tens{F}^{\dagger} \tens{F})) < \rank(\tens{X})\). This together with the rank inequality for matrix product, \(\rank(\tens{M} \tens{X}) \leq \rank(\tens{X} (\tens{I} - \tens{F}^{\dagger} \tens{F}))\), implies \(\rank(\tens{M} \tens{X}) < \rank(\tens{X})\), so \(\dim(\tens{M} \spc{X}) < \dim(\spc{X})\). This means \(\spc{X}\) must intersect with \(\ker(\tens{M})\), \ie \(\hat{\spc{X}} \cap \spc{X} \neq \{\vect{0}\}\). Proving \(\hat{\spc{Y}} \cap \spc{Y} \neq \{\vect{0}\}\) is similar by calculating \(\tens{M}^{\ast} \tens{Y}\).
\end{proof}


\subsection{Proofs of \texorpdfstring{\Cref{sec:identity}}{Section 3.1}} \label{sec:pf-identity}

\begin{proof}[Proof of \Cref{prop:MA}]
    Recall that \(\tens{M} = \tens{A}^{\dagger} \tens{Q}_1 = \tens{Q}_2 \tens{A}^{\dagger}\) where \(\tens{Q}_1\) and \(\tens{Q}_2\) are defined in \eqref{eq:def-Q12}. By using \(\tens{A}^{\dagger} \tens{A} \tens{A}^{\dagger} = \tens{A}^{\dagger}\), we have
    \begin{equation*}
        \tens{M} \tens{A} \tens{M} = (\tens{Q}_2 \tens{A}^{\dagger}) \tens{A} (\tens{A}^{\dagger} \tens{Q}_1) = \tens{Q}_2 \tens{A}^{\dagger} \tens{Q}_1.
    \end{equation*}
    But \(\tens{Q}_2 \tens{A}^{\dagger} = \tens{A}^{\dagger} \tens{Q}_1\), and \(\tens{Q}_1^2 = \tens{Q}_1\) by \Cref{lem:Q12}. Hence, \(\tens{M} \tens{A} \tens{M} = \tens{A}^{\dagger} \tens{Q}_1 = \tens{M}\), so \(\tens{M}\) satisfies the second condition of \eqref{eq:penrose} and concludes \eqref{eq:MA2}. Also, from the definition of \(\tens{M}\), we have \(\image(\tens{M}) \subseteq \image(\tens{A}^{\dagger})\), so \(\dim(\hat{\spc{Y}}^{\perp}) \leq \rank(\tens{A}^{\dagger})\), which can be written as \(\dim(\hat{\spc{Y}}) + \rank(\tens{A}) \geq m\).
\end{proof}

\begin{proof}[Proof of \Cref{cor:pinv}]
    The relation \eqref{eq:M-pinv} can be recognized by applying \Cref{prop:MA} to the known identity \(\tens{A}^{(2)}_{\hat{\spc{Y}}^{\perp}, \hat{\spc{X}}} = (\tens{P}_{\hat{\spc{X}}^{\perp}} \tens{A} \tens{P}_{\hat{\spc{Y}}^{\perp}})^{\dagger}\) \cite[p. 80]{ISRAEL-2003}. Also, \eqref{eq:Pxp} can be justified as follows. Substitute \eqref{eq:M} in \eqref{eq:M-P1} and set \(\tens{A} = \tens{I}\) and \(\tens{Y} = \tens{X}\). The resulted identity can also be written for \(\hat{\tens{X}}\) and \(\hat{\spc{X}}\) instead of \(\tens{X}\) and \(\spc{X}\) which concludes \eqref{eq:Pxp}. Showing \eqref{eq:Pyp} is similar.
\end{proof}

\begin{proof}[Proof of \Cref{prop:ind-1}]
    If \(\tens{A}\) is square, so is \(\tens{M}\), and \(\dim(\hat{\spc{Y}}) = \dim(\hat{\spc{X}})\) by the rank-nullity theorem. This together with \(\hat{\spc{Y}}^{\perp} \cap \hat{\spc{X}} = \{\vect{0}\}\) is equivalent to \(\hat{\spc{Y}}^{\perp} \oplus \hat{\spc{X}} = \mathbb{C}^n\). Also, \(\ind(\tens{M}) \leq 1\) implies \(\rank(\tens{M}) = \rank(\tens{M}^2)\) and vice versa. Therefore, we need to show \(\rank(\tens{M}) = \rank(\tens{M}^2)\) if and only if \(\hat{\spc{Y}}^{\perp} \cap \hat{\spc{X}} = \{\vect{0}\}\). Define \(\spc{V} \coloneqq \hat{\spc{Y}}^{\perp} \cap \hat{\spc{X}}\), and \(\spc{V}' \coloneqq \hat{\spc{Y}}^{\perp} / \spc{V}\) where \(\hat{\spc{Y}}^{\perp} = \spc{V} \oplus \spc{V}'\). Note that \(\tens{M} \spc{V} = \{\vect{0}\}\) since \(\spc{V} \subseteq \ker(\tens{M})\). Also, \(\dim(\tens{M} \spc{V}') = \dim(\spc{V}')\) since \(\spc{V}' \not\subset \ker(\tens{M})\). Therefore,
    \begin{equation}
        \rank(\tens{M}^2) = \dim(\tens{M} \hat{\spc{Y}}^{\perp}) = \dim(\tens{M} \spc{V} \oplus \tens{M} \spc{V}') = \dim(\spc{V}'). \label{eq:rank-M2}
    \end{equation}
    To show the sufficient condition, let \(\spc{V} = \{\vect{0}\}\), so \(\hat{\spc{Y}}^{\perp} = \spc{V}'\). Hence, from \eqref{eq:rank-M2} we have \(\rank(\tens{M}^2) = \dim(\hat{\spc{Y}}^{\perp}) = \rank(\tens{M})\). To show the necessary condition, let \(\rank(\tens{M}) = \rank(\tens{M}^2)\), but in contrary, suppose \(\spc{V} \neq \{\vect{0}\}\), so \(\spc{V}' \subsetneq \hat{\spc{Y}}^{\perp}\). Thus, \eqref{eq:rank-M2} implies \(\rank(\tens{M}^2) = \dim(\spc{V}') < \dim(\hat{\spc{Y}}^{\perp}) = \rank(\tens{M})\), which is a contradiction. Hence, it must be that \(\spc{V} = \{\vect{0}\}\).
\end{proof}

\begin{proposition} \label{prop:N-invertible}
    Suppose \(\tens{A} \in \mathcal{M}_{n,n}(\mathbb{C})\), and let the spaces \(\hat{\spc{X}}, \hat{\spc{Y}}\) satisfy \(\hat{\spc{Y}}^{\perp} \oplus \hat{\spc{X}} = \mathbb{C}^n\). If
    \begin{equation}
        \tens{A} \hat{\spc{Y}}^{\perp} \cap \hat{\spc{X}} = \{\vect{0}\}, \label{eq:A2-ypx}
    \end{equation}
    then \(\tens{P}_{\hat{\spc{X}}, \hat{\spc{Y}}^{\perp}} + \tens{A} \tens{P}_{\hat{\spc{Y}}^{\perp}, \hat{\spc{X}}}\) is non-singular.
\end{proposition}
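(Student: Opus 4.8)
The plan is to show that $\tens{N} \coloneqq \tens{P}_{\hat{\spc{X}}, \hat{\spc{Y}}^{\perp}} + \tens{A} \tens{P}_{\hat{\spc{Y}}^{\perp}, \hat{\spc{X}}}$ is injective; since $\tens{N}$ is square, this is equivalent to non-singularity. The hypothesis $\hat{\spc{Y}}^{\perp} \oplus \hat{\spc{X}} = \mathbb{C}^n$ makes the two oblique projectors in $\tens{N}$ well defined (and complementary, with sum $\tens{I}$), and it lets me write any $\vect{v} \in \mathbb{C}^n$ uniquely as $\vect{v} = \vect{v}_1 + \vect{v}_2$ with $\vect{v}_1 \in \hat{\spc{X}}$ and $\vect{v}_2 \in \hat{\spc{Y}}^{\perp}$. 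A projector is the identity on its range and zero on its kernel, so $\tens{P}_{\hat{\spc{X}}, \hat{\spc{Y}}^{\perp}} \vect{v} = \vect{v}_1$ and $\tens{P}_{\hat{\spc{Y}}^{\perp}, \hat{\spc{X}}} \vect{v} = \vect{v}_2$, whence $\tens{N} \vect{v} = \vect{v}_1 + \tens{A} \vect{v}_2$.

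First I would take $\vect{v} \in \ker(\tens{N})$, so that $\vect{v}_1 = -\tens{A}\vect{v}_2$. The left side lies in $\hat{\spc{X}}$ and the right side lies in $\tens{A}\hat{\spc{Y}}^{\perp}$, so the hypothesis \eqref{eq:A2-ypx} forces $\vect{v}_1 = \vect{0}$ and $\tens{A}\vect{v}_2 = \vect{0}$. It then remains to deduce $\vect{v}_2 = \vect{0}$, i.e.\ that $\hat{\spc{Y}}^{\perp} \cap \ker(\tens{A}) = \{\vect{0}\}$. For this I would note that $\hat{\spc{Y}}^{\perp} = \image(\tens{M})$ and that, reading off the defining formula \eqref{eq:M}, $\image(\tens{M}) \subseteq \image(\tens{A}^{\dagger}) = \image(\tens{A}^{\ast}) = \ker(\tens{A})^{\perp}$; hence $\vect{v}_2 \in \ker(\tens{A})^{\perp} \cap \ker(\tens{A}) = \{\vect{0}\}$. (Equivalently, one may use $\tens{M}\tens{A}\tens{M} = \tens{M}$ from \Cref{prop:MA} to see that any $\vect{w} \in \image(\tens{M}) \cap \ker(\tens{A})$ satisfies $\vect{w} = \tens{M}\tens{A}\vect{w} = \vect{0}$; or one may invoke \eqref{eq:A-ypx} from \Cref{rem:A-ypx}, which together with $\hat{\spc{Y}}^{\perp}\oplus\hat{\spc{X}} = \mathbb{C}^n$ forces $\tens{A}$ to be injective on $\hat{\spc{Y}}^{\perp}$ by a dimension count.) Thus $\vect{v} = \vect{v}_1 + \vect{v}_2 = \vect{0}$, so $\ker(\tens{N}) = \{\vect{0}\}$ and $\tens{N}$ is non-singular.

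I expect this last step to be the only genuine obstacle. The two stated hypotheses by themselves eliminate only the $\hat{\spc{X}}$-component of $\vect{v}$ and leave a vector in $\hat{\spc{Y}}^{\perp} \cap \ker(\tens{A})$, which need not be trivial for an arbitrary complementary pair of subspaces. What rescues the argument is that $\hat{\spc{X}}$ and $\hat{\spc{Y}}$ are the kernel and cokernel of the specific matrix $\tens{M}$ of \eqref{eq:M}, so $\hat{\spc{Y}}^{\perp} = \image(\tens{M})$ lies inside $\image(\tens{A}^{\ast})$ and $\tens{A}$ acts injectively there; I would record this fact explicitly before carrying out the short decomposition argument.
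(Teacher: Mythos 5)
Your proof is correct and follows essentially the same route as the paper's: decompose a kernel vector along \(\hat{\spc{X}} \oplus \hat{\spc{Y}}^{\perp}\), use \eqref{eq:A2-ypx} to kill both summands of \(\tens{N}\vect{v}\), and then rule out a nonzero remainder in \(\hat{\spc{Y}}^{\perp} \cap \ker(\tens{A})\) via \(\hat{\spc{Y}}^{\perp} = \image(\tens{M}) \subseteq \image(\tens{A}^{\dagger}) = \image(\tens{A}^{\ast}) \perp \ker(\tens{A})\), which is exactly the paper's final step (its equation \eqref{eq:im-T2}). You correctly identified that this last containment is the one genuinely nontrivial ingredient beyond the stated hypotheses.
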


\begin{proof}
    Let \(\mathcal{N} \coloneqq \ker(\tens{P}_{\hat{\spc{X}}, \hat{\spc{Y}}^{\perp}} + \tens{A} \tens{P}_{\hat{\spc{Y}}^{\perp}, \hat{\spc{X}}})\). Suppose \(\vect{v} \in \mathcal{N}\), meaning
    \begin{equation}
        \tens{P}_{\hat{\spc{X}}, \hat{\spc{Y}}^{\perp}} \vect{v} + \tens{A} \tens{P}_{\hat{\spc{Y}}^{\perp}, \hat{\spc{X}}} \vect{v} = \vect{0}.
    \end{equation}
    Observe \(\tens{P}_{\hat{\spc{X}}, \hat{\spc{Y}}^{\perp}} \vect{v} \in \hat{\spc{X}}\) and \(\tens{A} \tens{P}_{\hat{\spc{Y}}^{\perp}, \hat{\spc{X}}} \vect{v} \in \tens{A} \hat{\spc{Y}}^{\perp}\). However, because of \eqref{eq:A2-ypx}, it must be that
    \begin{equation}
        \tens{P}_{\hat{\spc{X}}, \hat{\spc{Y}}^{\perp}} \vect{v} = \vect{0},
        \quad \text{and} \quad
        \tens{A} \tens{P}_{\hat{\spc{Y}}^{\perp}, \hat{\spc{X}}} \vect{v} = \vect{0}.
        \sublabel{eq:Nv}{a,b}
    \end{equation}
    The relation \subeqref{eq:Nv}{a} means \(\vect{v} \in \ker(\tens{P}_{\hat{\spc{X}}, \hat{\spc{Y}}^{\perp}}) = \hat{\spc{Y}}^{\perp}\), which implies \(\mathcal{N} \subseteq \hat{\spc{Y}}^{\perp}\) and \(\tens{P}_{\hat{\spc{Y}}^{\perp}, \hat{\spc{X}}} \vect{v} = \vect{v}\). So \subeqref{eq:Nv}{b} yields \(\tens{A} \vect{v} = \vect{0}\), meaning \(\vect{v} \in \ker(\tens{A})\), which implies \(\mathcal{N} \subseteq \ker(\tens{A})\). We concluded
    \begin{equation}
        \mathcal{N} \subseteq \hat{\spc{Y}}^{\perp} \cap \ker(\tens{A}). \label{eq:N-Y-A}
    \end{equation}
    On the other hand, recall from \Cref{lem:Q12} that \(\tens{M} = \tens{A}^{\dagger} \tens{Q}_1\), so
    \begin{equation}
        \hat{\spc{Y}}^{\perp} = \image(\tens{M}) \subseteq \image(\tens{A}^{\dagger}) = \image(\tens{A}^{\ast}) \perp \ker(\tens{A}). \label{eq:im-T2}
    \end{equation}
    Hence \(\hat{\spc{Y}}^{\perp} \cap \ker(\tens{A}) = \{\vect{0}\}\). So from \eqref{eq:N-Y-A}, it must be that \(\mathcal{N} = \{\vect{0}\}\), which concludes the proof.
\end{proof}

\begin{proof}[Proof of \Cref{thm:M-rep}]
    Recall from \Cref{rem:A-ypx} that \(\hat{\spc{X}}\) and \(\hat{\spc{Y}}\) satisfy \eqref{eq:A-ypx}. Thus, \eqref{eq:A2-ypx} holds and \(\tens{N}\) is non-singular by \Cref{prop:N-invertible}. We can right-multiply \eqref{eq:M-rep} by \(\tens{N}\) to instead prove
    \begin{equation}
        \tens{T} \coloneqq \tens{M} (\tens{P}_{\hat{\spc{X}}, \hat{\spc{Y}}^{\perp}} + \tens{A} \tens{P}_{\hat{\spc{Y}}^{\perp}, \hat{\spc{X}}}) - \tens{P}_{\hat{\spc{Y}}^{\perp}, \hat{\spc{X}}} = \tens{0}. \label{eq:def-T}
    \end{equation}
    \begin{enumerate}[leftmargin=*,align=left,label*=\emph{Step (\roman*).},ref=step (\roman*),wide]
        \item We first show \(\tens{A} \tens{T} = \tens{0}\). Using the identity \(\tens{A} \tens{A}^{(2)}_{\hat{\spc{Y}}^{\perp}, \hat{\spc{X}}} = \tens{P}_{\tens{A} \hat{\spc{Y}}^{\perp}, \hat{\spc{X}}}\) \citep[p. 33, Exercise 9, Part 1]{WANG-2018} and applying \(\tens{A}^{(2)}_{\hat{\spc{Y}}^{\perp}, \hat{\spc{X}}} = \tens{M}\) from \Cref{prop:MA}, we obtain
        \begin{align*}
            \tens{A} \tens{T} &= \tens{P}_{\tens{A} \hat{\spc{Y}}^{\perp}, \hat{\spc{X}}} \left( \tens{P}_{\hat{\spc{X}}, \hat{\spc{Y}}^{\perp}} + \tens{A} \tens{P}_{\hat{\spc{Y}}^{\perp}, \hat{\spc{X}}} \right) - \tens{A} \tens{P}_{\hat{\spc{Y}}^{\perp}, \hat{\spc{X}}} \\
            &= \tens{P}_{\tens{A} \hat{\spc{Y}}^{\perp}, \hat{\spc{X}}} \tens{A} \tens{P}_{\hat{\spc{Y}}^{\perp}, \hat{\spc{X}}} - \tens{A} \tens{P}_{\hat{\spc{Y}}^{\perp}, \hat{\spc{X}}} \\
            &= \left( \tens{I} - \tens{P}_{\hat{\spc{X}}, \tens{A} \hat{\spc{Y}}^{\perp}} \right) \tens{A} \tens{P}_{\hat{\spc{Y}}^{\perp}, \hat{\spc{X}}} - \tens{A} \tens{P}_{\hat{\spc{Y}}^{\perp}, \hat{\spc{X}}} \\
            &= \tens{A} \tens{P}_{\hat{\spc{Y}}^{\perp}, \hat{\spc{X}}} - \tens{A} \tens{P}_{\hat{\spc{Y}}^{\perp}, \hat{\spc{X}}} = \tens{0}.
        \end{align*}
        In the above, we used \(\tens{P}_{\tens{A} \hat{\spc{Y}}^{\perp}, \hat{\spc{X}}} \tens{P}_{\hat{\spc{X}}, \hat{\spc{Y}}^{\perp}} = \tens{0}\), also \(\tens{P}_{\hat{\spc{X}}, \tens{A} \hat{\spc{Y}}^{\perp}} \tens{A} \tens{P}_{\hat{\spc{Y}}^{\perp}, \hat{\spc{X}}} = \tens{0}\) since \(\image(\tens{A} \tens{P}_{\hat{\spc{Y}}^{\perp}, \hat{\spc{X}}}) = \tens{A} \hat{\spc{Y}}^{\perp}\).

    \item We now show \(\tens{A} \tens{T} = \tens{0}\) implies \(\tens{T} = \tens{0}\). By the definition of \(\tens{T}\) in \eqref{eq:def-T}, observe that
        \begin{equation}
            \image(\tens{T}) \subseteq \image(\tens{M}) + \image(\tens{P}_{\hat{\spc{Y}}^{\perp}, \hat{\spc{X}}}) = \hat{\spc{Y}}^{\perp}. \label{eq:im-T}
        \end{equation}
        But \eqref{eq:im-T} and \eqref{eq:im-T2} imply \(\image(\tens{T}) \perp \ker(\tens{A})\), hence, the only way \(\tens{A} \tens{T}\) can be null is if either \(\tens{T} = \tens{0}\) or \(\tens{A} = \tens{0}\). The latter case also leads to \(\tens{T} = \tens{0}\) because \(\tens{A} = \tens{0}\) implies \(\tens{M} = \tens{0}\) and by \eqref{eq:A-ypx} we must have \(\hat{\spc{X}} = \mathbb{C}^n\) so \(\tens{P}_{\hat{\spc{Y}}^{\perp}, \hat{\spc{X}}} = \tens{0}\) and by \eqref{eq:def-T}, \(\tens{T}\) becomes null.
    \end{enumerate}
    Lastly, we justify \eqref{eq:P_xy} as follows. Set \(\tens{A} = \tens{I}\), so \eqref{eq:M-rep} becomes \(\tens{M} = \tens{P}_{\hat{\spc{Y}}^{\perp}, \hat{\spc{X}}}\). This together with the expression for \(\tens{M}\) in \eqref{eq:M}, when \(\tens{A} = \tens{I}\), yields \eqref{eq:P_xy}.
\end{proof}

\begin{proof}[Proof of \Cref{cor:comp}]

    By left and right multiplication of \(\tens{P}_{\hat{\spc{X}}^{\perp}}\) to \(\tens{M} \tens{N} = \tens{P}_{\hat{\spc{Y}}^{\perp}, \hat{\spc{X}}}\) in \eqref{eq:M-rep} and using the identities \(\tens{M} = \tens{M} \tens{P}_{\hat{\spc{X}}^{\perp}}\) and \(\tens{P}_{\hat{\spc{X}}^{\perp}} \tens{P}_{\hat{\spc{Y}}^{\perp}, \hat{\spc{X}}} = \tens{P}_{\hat{\spc{X}}^{\perp}}\), we obtain
    \begin{equation}
        \tens{P}_{\hat{\spc{X}}^{\perp}} \tens{M} \tens{P}_{\hat{\spc{X}}^{\perp}} \tens{N} \tens{P}_{\hat{\spc{X}}^{\perp}} = \tens{P}_{\hat{\spc{X}}^{\perp}}. \label{eq:PxMN}
    \end{equation}
    Set \(\tens{P}_{\hat{\spc{X}}^{\perp}} = \tens{U}_{\hat{\spc{X}}^{\perp}} \tens{U}_{\hat{\spc{X}}^{\perp}}^{\ast}\). Left and right multiplication of \eqref{eq:PxMN} respectively by \(\tens{U}_{\hat{\spc{X}}^{\perp}}^{\ast}\) and \(\tens{U}_{\hat{\spc{X}}^{\perp}}\) yields \(\tens{M}_{\hat{\spc{X}}^{\perp}} \tens{N}_{\hat{\spc{X}}^{\perp}} = \tens{I}\). Since \(\tens{N}\) is non-singular by \Cref{thm:M-rep}, so is \(\tens{N}_{\hat{\spc{X}}^{\perp}}\). Also, \(\tens{M}_{\hat{\spc{X}}^{\perp}}\) is non-singular since \(\image(\tens{U}_{\hat{\spc{X}}^{\perp}}) \not\subseteq \ker(\tens{M})\) and from \Cref{prop:ind-1} we know \(\image(\tens{M}) = \hat{\spc{Y}}^{\perp} \not\subseteq \hat{\spc{X}} = \ker(\tens{U}_{\hat{\spc{X}}^{\perp}}^{\ast})\).
\end{proof}


\subsection{Proofs of \texorpdfstring{\Cref{sec:det}}{section 3.3}} \label{sec:pf-det}

\begin{proof}[Proof of \Cref{lem:XYAplus}]
    Recall from \Cref{lem:Q12} that \(\tens{M} = \tens{Q}_2 \tens{A}^{\dagger}\), so \(\ker(\tens{A}^{\dagger}) \subseteq \ker(\tens{M}) = \hat{\spc{X}}\). The orthogonal complement of this relation and using \(\ker(\tens{A}^{\dagger}) = \ker(\tens{A}^{\ast}) = \spc{A}^{\perp}\) yields \(\hat{\spc{X}}^{\perp} \subseteq \spc{A}\). Also, recall from \eqref{eq:im-T2} that \(\hat{\spc{Y}}^{\perp} \subseteq \image(\tens{A}^{\ast})\). Since \(\tens{A}\) is EP, \(\image(\tens{A}^{\ast}) = \spc{A}\), which concludes \(\hat{\spc{Y}}^{\perp} \subseteq \spc{A}\). From the orthogonal complement of both these relations, we obtain \(\spc{A}^{\perp} \subset \hat{\spc{X}} \cap \hat{\spc{Y}}\).

    To show \subeqref{eq:XAplus}{a}, firstly, observe that
    \begin{equation}
        \hat{\spc{X}}_{\spc{A}} \cap \spc{A}^{\perp} = (\hat{\spc{X}} \cap \spc{A}) \cap \spc{A}^{\perp} = \{\vect{0}\}. \label{eq:XA-share}
    \end{equation}
    Secondly, suppose \(\hat{\vect{x}} \in \hat{\spc{X}} \subseteq \mathbb{C}^n\). Since \(\hat{\vect{x}} \in \mathbb{C}^n = \spc{A} \oplus \spc{A}^{\perp}\), there exists \(\vect{a} \in \spc{A}\) and \(\vect{a}^{\perp} \in \spc{A}^{\perp}\) so that \(\hat{\vect{x}} = \vect{a} + \vect{a}^{\perp}\). Recall that earlier we concluded \(\hat{\spc{X}}^{\perp} \subseteq \spc{A}\), which also means \(\spc{A}^{\perp} \subseteq \hat{\spc{X}}\). Thus, \(\vect{a}^{\perp} \in \hat{\spc{X}}\), so \(\vect{a} = \hat{\vect{x}} - \vect{a}^{\perp} \in \hat{\spc{X}} \cap \spc{A} = \hat{\spc{X}}_{\spc{A}}\). Thus, for every \(\hat{\vect{x}} \in \hat{\spc{X}}\), there exists \(\vect{a} \in \hat{\spc{X}}_{\spc{A}}\) and \(\vect{a}^{\perp} \in \spc{A}^{\perp}\) so that \(\hat{\vect{x}} = \vect{a} + \vect{a}^{\perp}\). This result together with \eqref{eq:XA-share} concludes \subeqref{eq:XAplus}{a}. Proving \subeqref{eq:YAplus}{a} is similar. Also, since from \eqref{eq:dim-xy} we have \(\dim(\hat{\spc{X}}) = \dim(\hat{\spc{Y}})\), comparing \subeqref{eq:XAplus}{a} and \subeqref{eq:YAplus}{a} yields \(\dim(\hat{\spc{X}}_{\spc{A}}) = \dim(\hat{\spc{Y}}_{\spc{A}})\).

    Note that \subeqref{eq:XAplus}{a} is a consequence of \(\hat{\spc{X}}^{\perp} \subseteq \spc{A}\). By swapping \(\hat{\spc{X}}\) and \(\spc{A}\), the latter relation remains valid, which accordingly yields \subeqref{eq:XAplus}{b} from \subeqref{eq:XAplus}{a}. Proving \subeqref{eq:YAplus}{b} from \subeqref{eq:YAplus}{a} is similar.
\end{proof}

Let \(\tens{U}_{\hat{\spc{X}}_{\spc{A}}}\), \(\tens{U}_{\hat{\spc{Y}}_{\spc{A}}}\), \(\tens{U}_{\hat{\spc{X}}^{\perp}}\), \(\tens{U}_{\hat{\spc{Y}}^{\perp}}\), and \(\tens{U}_{\spc{A}^{\perp}}\) be as in \Cref{prop:pMp}. For the following proofs, define
\begin{equation}
    \tens{U}_{\hat{\spc{X}}} \coloneqq [\tens{U}_{\hat{\spc{X}}_{\spc{A}}}, \tens{U}_{\spc{A}^{\perp}}],
    \quad \text{and} \quad
    \tens{U}_{\hat{\spc{Y}}} \coloneqq [\tens{U}_{\hat{\spc{Y}}_{\spc{A}}}, \tens{U}_{\spc{A}^{\perp}}],
    \sublabel{eq:UxUyUa}{a,b}
\end{equation}
which their columns form bases for \(\hat{\spc{X}}\) and \(\hat{\spc{Y}}\) due to \subeqref{eq:XAplus}{a} and \subeqref{eq:YAplus}{a}, respectively. Also, define
\begin{equation}
    \tens{U}_{\spc{A}}' \coloneqq [\tens{U}_{\hat{\spc{X}}_{\spc{A}}}, \tens{U}_{\hat{\spc{X}}^{\perp}}],
    \quad \text{and} \quad
    \tens{U}_{\spc{A}}'' \coloneqq [\tens{U}_{\hat{\spc{Y}}_{\spc{A}}}, \tens{U}_{\hat{\spc{Y}}^{\perp}}],
    \sublabel{eq:UAp12}{a,b}
\end{equation}
which their columns form bases for \(\spc{A}\) due to \subeqref{eq:XAplus}{b} and \subeqref{eq:YAplus}{b}, respectively. Accordingly, define
\begin{equation}
    \tens{U}' \coloneqq [\tens{U}_{\hat{\spc{X}}}, \tens{U}_{\hat{\spc{X}}^{\perp}}],
    \quad \text{and} \quad
    \tens{U}'' \coloneqq [\tens{U}_{\hat{\spc{Y}}}, \tens{U}_{\hat{\spc{Y}}^{\perp}}],
    \sublabel{eq:Up12}{a,b}
\end{equation}
which their columns form bases for \(\mathbb{C}^{n}\).

\begin{lemma} \label{lem:UAU}
    The matrix \(\tens{U}_{\spc{A}}'^{\ast} \tens{U}_{\spc{A}}''\) is non-singular. Furthermore, if \(\tens{A}\) is EP, then, \(\tens{U}_{\spc{A}}'^{\ast} \tens{A} \tens{U}_{\spc{A}}''\) is non-singular and 
    \begin{equation}
        \vert \tens{U}_{\spc{A}}'^{\ast} \tens{A} \tens{U}_{\spc{A}}'' \vert = 
        \vert \tens{U}_{\spc{A}}'^{\ast} \tens{U}_{\spc{A}}'' \vert \, \vert \tens{A} \vert_{\dagger}. \label{eq:UAU}
    \end{equation}
\end{lemma}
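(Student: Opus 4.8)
The plan is to use \Cref{lem:XYAplus}, which guarantees that the columns of both $\tens{U}_{\spc{A}}'$ and $\tens{U}_{\spc{A}}''$ form bases for $\spc{A}$, so both matrices lie in $\mathcal{M}_{n, r}(\mathbb{C})$, have full column rank $r \coloneqq \rank(\tens{A})$, and have column space exactly $\spc{A}$. The first claim then follows from the general fact that whenever $\tens{V}, \tens{W} \in \mathcal{M}_{n, r}(\mathbb{C})$ have full column rank and $\image(\tens{V}) = \image(\tens{W}) = \spc{A}$ with $\dim(\spc{A}) = r$, the $r \times r$ matrix $\tens{V}^{\ast} \tens{W}$ is non-singular: if $\tens{V}^{\ast} \tens{W} \vect{v} = \vect{0}$, then $\tens{W} \vect{v} \in \ker(\tens{V}^{\ast}) = \image(\tens{V})^{\perp} = \spc{A}^{\perp}$, while also $\tens{W} \vect{v} \in \spc{A}$; hence $\tens{W} \vect{v} \in \spc{A} \cap \spc{A}^{\perp} = \{\vect{0}\}$, and full column rank of $\tens{W}$ forces $\vect{v} = \vect{0}$. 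Applying this to $(\tens{V}, \tens{W}) = (\tens{U}_{\spc{A}}', \tens{U}_{\spc{A}}'')$ gives non-singularity of $\tens{U}_{\spc{A}}'^{\ast} \tens{U}_{\spc{A}}''$; the same fact with $\tens{U}_{\spc{A}}$ in place of one of the arguments shows $\tens{U}_{\spc{A}}'^{\ast} \tens{U}_{\spc{A}}$ and $\tens{U}_{\spc{A}}^{\ast} \tens{U}_{\spc{A}}''$ are non-singular as well.

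For the EP case, I would substitute the core-nilpotent form $\tens{A} = \tens{U}_{\spc{A}} \tilde{\tens{A}} \tens{U}_{\spc{A}}^{\ast}$ from \eqref{eq:A-tilde}, with $\tilde{\tens{A}} \in \mathcal{M}_{r, r}(\mathbb{C})$ non-singular, to obtain
\[
    \tens{U}_{\spc{A}}'^{\ast} \tens{A} \tens{U}_{\spc{A}}'' = \left( \tens{U}_{\spc{A}}'^{\ast} \tens{U}_{\spc{A}} \right) \tilde{\tens{A}} \left( \tens{U}_{\spc{A}}^{\ast} \tens{U}_{\spc{A}}'' \right),
\]
a product of three non-singular $r \times r$ matrices, hence non-singular, with determinant equal to $\vert \tens{U}_{\spc{A}}'^{\ast} \tens{U}_{\spc{A}} \vert \, \vert \tilde{\tens{A}} \vert \, \vert \tens{U}_{\spc{A}}^{\ast} \tens{U}_{\spc{A}}'' \vert$. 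To collapse the two outer factors, note that the columns of $\tens{U}_{\spc{A}}'$ and $\tens{U}_{\spc{A}}''$ lie in $\spc{A}$, so $\tens{P}_{\spc{A}} \tens{U}_{\spc{A}}'' = \tens{U}_{\spc{A}}''$ and $\tens{U}_{\spc{A}}'^{\ast} \tens{P}_{\spc{A}} = \tens{U}_{\spc{A}}'^{\ast}$; writing $\tens{P}_{\spc{A}} = \tens{U}_{\spc{A}} \tens{U}_{\spc{A}}^{\ast}$ gives $\tens{U}_{\spc{A}}'^{\ast} \tens{U}_{\spc{A}}'' = ( \tens{U}_{\spc{A}}'^{\ast} \tens{U}_{\spc{A}} )( \tens{U}_{\spc{A}}^{\ast} \tens{U}_{\spc{A}}'' )$, whence $\vert \tens{U}_{\spc{A}}'^{\ast} \tens{U}_{\spc{A}} \vert \, \vert \tens{U}_{\spc{A}}^{\ast} \tens{U}_{\spc{A}}'' \vert = \vert \tens{U}_{\spc{A}}'^{\ast} \tens{U}_{\spc{A}}'' \vert$. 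Finally, since $\tens{A}$ is unitarily similar to $\diag(\tilde{\tens{A}}, \tens{0})$ and $\tilde{\tens{A}}$ is non-singular, the non-zero eigenvalues of $\tens{A}$ are exactly the eigenvalues of $\tilde{\tens{A}}$, so $\vert \tens{A} \vert_{\dagger} = \vert \tilde{\tens{A}} \vert$; combining the last three observations yields \eqref{eq:UAU}.

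The computation is essentially routine, so the only delicate point is the bookkeeping at the start: one must be sure that the concatenations $\tens{U}_{\spc{A}}' = [\tens{U}_{\hat{\spc{X}}_{\spc{A}}}, \tens{U}_{\hat{\spc{X}}^{\perp}}]$ and $\tens{U}_{\spc{A}}'' = [\tens{U}_{\hat{\spc{Y}}_{\spc{A}}}, \tens{U}_{\hat{\spc{Y}}^{\perp}}]$ really do have full column rank $r$ and column space precisely $\spc{A}$, even though their two constituent blocks span complementary but non-orthogonal subspaces. This is exactly what the direct-sum identities $\spc{A} = \hat{\spc{X}}_{\spc{A}} \oplus \hat{\spc{X}}^{\perp}$ and $\spc{A} = \hat{\spc{Y}}_{\spc{A}} \oplus \hat{\spc{Y}}^{\perp}$ from \Cref{lem:XYAplus} provide.
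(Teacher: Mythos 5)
Your proof is correct and follows essentially the same route as the paper's: the same kernel-intersection argument for non-singularity of \(\tens{U}_{\spc{A}}'^{\ast} \tens{U}_{\spc{A}}''\), the same substitution of the core-nilpotent form \(\tens{A} = \tens{U}_{\spc{A}} \tilde{\tens{A}} \tens{U}_{\spc{A}}^{\ast}\) to factor the determinant, and the same collapsing of the outer factors via \(\tens{P}_{\spc{A}} = \tens{U}_{\spc{A}} \tens{U}_{\spc{A}}^{\ast}\) (the paper phrases this by introducing the coordinate matrices \(\tilde{\tens{U}}_{\spc{A}}' = \tens{U}_{\spc{A}}^{\ast}\tens{U}_{\spc{A}}'\) and \(\tilde{\tens{U}}_{\spc{A}}''= \tens{U}_{\spc{A}}^{\ast}\tens{U}_{\spc{A}}''\), which is the same computation). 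If anything, your non-singularity argument for \(\tens{U}_{\spc{A}}'^{\ast} \tens{A} \tens{U}_{\spc{A}}''\) as a product of three non-singular \(r\times r\) matrices is slightly cleaner than the paper's image/kernel containment argument.
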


\begin{proof}
    Since \(\image(\tens{U}_{\spc{A}}'') = \spc{A}\) and \(\ker(\tens{U}_{\spc{A}}'^{\ast}) = \spc{A}^{\perp}\), we have \(\image(\tens{U}_{\spc{A}}'') \not\subseteq \ker(\tens{U}_{\spc{A}}'^{\ast})\). So, the kernel of \(\tens{U}_{\spc{A}}'^{\ast} \tens{U}_{\spc{A}}''\) is the kernel of \(\tens{U}_{\spc{A}}''\), but \(\tens{U}_{\spc{A}}''\) is full column-rank. Hence, the square matrix \(\tens{U}_{\spc{A}}'^{\ast} \tens{U}_{\spc{A}}''\) is non-singular. Also, if \(\tens{A}\) is EP, then \(\ker(\tens{A}) = \spc{A}^{\perp}\), so \(\image(\tens{U}_{\spc{A}}'') \not\subseteq \ker(\tens{A})\). Furthermore, \(\image(\tens{A}) \not\subseteq \ker(\tens{U}_{\spc{A}}'^{\ast})\), so, the kernel of \(\tens{U}_{\spc{A}}'^{\ast} \tens{A} \tens{U}_{\spc{A}}''\) is the kernel of \(\tens{U}_{\spc{A}}''\). Thus, the square matrix \(\tens{U}_{\spc{A}}'^{\ast} \tens{A} \tens{U}_{\spc{A}}''\) is non-singular.

    Define \(\tilde{\tens{U}}_{\spc{A}}' \coloneqq \tens{U}_{\spc{A}}^{\ast} \tens{U}_{\spc{A}}'\) and \(\tilde{\tens{U}}_{\spc{A}}'' \coloneqq \tens{U}_{\spc{A}}^{\ast} \tens{U}_{\spc{A}}''\), which respectively are the coordinates of \(\tens{U}_{\spc{A}}'\) and \(\tens{U}_{\spc{A}}''\)  on the basis of the columns of \(\tens{U}_{\spc{A}}\) defined in \eqref{eq:A-tilde}. Also, if \(\tens{A}\) is EP, recall from the nilpotent decomposition \eqref{eq:A-tilde} that \(\tens{A} = \tens{U}_{\spc{A}} \tilde{\tens{A}} \tens{U}_{\spc{A}}^{\ast}\) where \(\tilde{\tens{A}}\) is non-singular. We have
    \begin{equation}
        \tens{U}_{\spc{A}}'^{\ast} \tens{A} \tens{U}_{\spc{A}}'' = \tilde{\tens{U}}_{\spc{A}}'^{\ast} \tilde{\tens{A}} \tilde{\tens{U}}_{\spc{A}}''.
    \end{equation}
    Note that \(\tilde{\tens{U}}_{\spc{A}}', \tilde{\tens{U}}_{\spc{A}}'' \in \mathcal{M}_{r, r}(\mathbb{C})\) with \(r = \dim(\spc{A})\) are bases in \(\spc{A}\) and are non-singular. Thus,
    \begin{equation}
        \vert \tens{U}_{\spc{A}}'^{\ast} \tens{A} \tens{U}_{\spc{A}}'' \vert = 
        \vert \tilde{\tens{U}}_{\spc{A}}'^{\ast} \tilde{\tens{A}} \tilde{\tens{U}}_{\spc{A}}'' \vert =
        \vert \tilde{\tens{U}}_{\spc{A}}'^{\ast} \vert \, \vert \tilde{\tens{A}} \vert \, \vert \tilde{\tens{U}}_{\spc{A}}'' \vert =
        \vert \tilde{\tens{U}}_{\spc{A}}'^{\ast}  \tilde{\tens{U}}_{\spc{A}}'' \vert \, \vert \tilde{\tens{A}} \vert =
        \vert \tens{U}_{\spc{A}}'^{\ast}  \tens{U}_{\spc{A}}'' \vert \, \vert \tilde{\tens{A}} \vert. \label{eq:UAAUA}
    \end{equation}
    Also, \(\tens{A}\) and \(\diag(\tilde{\tens{A}}, \tens{0})\) in \eqref{eq:A-tilde} are unitarily similar, thus \(\vert \tilde{\tens{A}} \vert = \vert \tens{A} \vert_{\dagger}\), which concludes \eqref{eq:UAU}.
\end{proof}

\begin{lemma} \label{lem:pUxAUy}
    If \(\tens{A}\) is square, then \(\tens{U}_{\hat{\spc{X}}_{\perp}}^{\ast} \tens{A} \tens{U}_{\hat{\spc{Y}}^{\perp}}\) is non-singular. Furthermore, if \(\tens{A}\) is EP, then \(\tens{U}_{\hat{\spc{Y}}_{\spc{A}}}^{\ast} \tens{A}^{\dagger} \tens{U}_{\hat{\spc{X}}_{\spc{A}}}\) is non-singular and
        \begin{equation}
            \vert \tens{U}_{\hat{\spc{X}}^{\perp}}^{\ast} \tens{A} \tens{U}_{\hat{\spc{Y}}^{\perp}} \vert =
            \vert \tens{U}_{\hat{\spc{Y}}_{\spc{A}}}^{\ast} \tens{A}^{\dagger} \tens{U}_{\hat{\spc{X}}_{\spc{A}}} \vert \,
            \vert \tens{U}_{\spc{A}}'^{\ast} \tens{U}_{\spc{A}}'' \vert \,
            \vert \tens{A} \vert_{\dagger}.
            \label{eq:UAU2}
        \end{equation}
\end{lemma}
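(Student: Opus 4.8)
The plan is to treat the two assertions of the lemma separately. First I would establish that \(\tens{U}_{\hat{\spc{X}}^{\perp}}^{\ast} \tens{A} \tens{U}_{\hat{\spc{Y}}^{\perp}}\) is a non-singular square matrix, and then I would obtain \eqref{eq:UAU2} by passing to the coordinate space of \(\spc{A}\) and extracting a block of an invertible \(r \times r\) matrix together with the matching block of its inverse, via the Schur-complement/Jacobi identities. For the first assertion, I would use \Cref{cor:pinv}: since \(\tens{M}^{\dagger} = \tens{P}_{\hat{\spc{X}}^{\perp}} \tens{A} \tens{P}_{\hat{\spc{Y}}^{\perp}}\) and \(\tens{P}_{\hat{\spc{X}}^{\perp}} \tens{U}_{\hat{\spc{X}}^{\perp}} = \tens{U}_{\hat{\spc{X}}^{\perp}}\), \(\tens{P}_{\hat{\spc{Y}}^{\perp}} \tens{U}_{\hat{\spc{Y}}^{\perp}} = \tens{U}_{\hat{\spc{Y}}^{\perp}}\), we have \(\tens{U}_{\hat{\spc{X}}^{\perp}}^{\ast} \tens{A} \tens{U}_{\hat{\spc{Y}}^{\perp}} = \tens{U}_{\hat{\spc{X}}^{\perp}}^{\ast} \tens{M}^{\dagger} \tens{U}_{\hat{\spc{Y}}^{\perp}}\). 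Now \(\image(\tens{M}^{\dagger}) = \image(\tens{M}^{\ast}) = \coimage(\tens{M}) = \hat{\spc{X}}^{\perp}\) and \(\ker(\tens{M}^{\dagger}) = \ker(\tens{M}^{\ast}) = \coker(\tens{M}) = \hat{\spc{Y}}\), so \(\tens{M}^{\dagger}\) restricts to a bijection \(\hat{\spc{Y}}^{\perp} \to \hat{\spc{X}}^{\perp}\) (injective because \(\hat{\spc{Y}}^{\perp} \cap \hat{\spc{Y}} = \{\vect{0}\}\), and surjective by the dimension count \(\dim(\hat{\spc{Y}}^{\perp}) = \rank(\tens{M}) = \dim(\hat{\spc{X}}^{\perp})\)). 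Pre-composing with the isomorphism \(\tens{U}_{\hat{\spc{Y}}^{\perp}}\) onto \(\hat{\spc{Y}}^{\perp}\) and post-composing with the isomorphism \(\tens{U}_{\hat{\spc{X}}^{\perp}}^{\ast}\) from \(\hat{\spc{X}}^{\perp}\) yields a bijection, hence \(\tens{U}_{\hat{\spc{X}}^{\perp}}^{\ast} \tens{A} \tens{U}_{\hat{\spc{Y}}^{\perp}}\) is non-singular.

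For \eqref{eq:UAU2} I would assume \(\tens{A}\) is EP and work in the coordinate space of \(\spc{A}\) via the core-nilpotent decomposition \(\tens{A} = \tens{U}_{\spc{A}} \tilde{\tens{A}} \tens{U}_{\spc{A}}^{\ast}\) of \eqref{eq:A-tilde}, so that \(\tens{A}^{\dagger} = \tens{U}_{\spc{A}} \tilde{\tens{A}}^{-1} \tens{U}_{\spc{A}}^{\ast}\) with \(\tilde{\tens{A}}\) invertible. With \(\tilde{\tens{U}}_{\spc{A}}' = \tens{U}_{\spc{A}}^{\ast} \tens{U}_{\spc{A}}'\) and \(\tilde{\tens{U}}_{\spc{A}}'' = \tens{U}_{\spc{A}}^{\ast} \tens{U}_{\spc{A}}''\) as in the proof of \Cref{lem:UAU}, the key observation to establish is that these two \(r \times r\) matrices are in fact \emph{unitary}: by \subeqref{eq:XAplus}{b} the columns of \(\tens{U}_{\spc{A}}' = [\tens{U}_{\hat{\spc{X}}_{\spc{A}}}, \tens{U}_{\hat{\spc{X}}^{\perp}}]\) span \(\spc{A}\), and since \(\hat{\spc{X}}_{\spc{A}} \subseteq \hat{\spc{X}}\) is orthogonal to \(\hat{\spc{X}}^{\perp}\) these columns are orthonormal and lie in \(\spc{A}\), whence \(\tilde{\tens{U}}_{\spc{A}}'^{\ast} \tilde{\tens{U}}_{\spc{A}}' = \tens{U}_{\spc{A}}'^{\ast} \tens{P}_{\spc{A}} \tens{U}_{\spc{A}}' = \tens{I}_r\); the same argument with \subeqref{eq:YAplus}{b} handles \(\tilde{\tens{U}}_{\spc{A}}''\). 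Consequently \(\tens{B} \coloneqq \tens{U}_{\spc{A}}'^{\ast} \tens{A} \tens{U}_{\spc{A}}'' = \tilde{\tens{U}}_{\spc{A}}'^{\ast} \tilde{\tens{A}} \tilde{\tens{U}}_{\spc{A}}''\) is a non-singular \(r \times r\) matrix (non-singular by \Cref{lem:UAU}, since \(\vert \tens{B} \vert = \vert \tens{U}_{\spc{A}}'^{\ast} \tens{U}_{\spc{A}}'' \vert \, \vert \tens{A} \vert_{\dagger}\)), and using unitarity of \(\tilde{\tens{U}}_{\spc{A}}', \tilde{\tens{U}}_{\spc{A}}''\) one gets \(\tens{B}^{-1} = \tilde{\tens{U}}_{\spc{A}}''^{\ast} \tilde{\tens{A}}^{-1} \tilde{\tens{U}}_{\spc{A}}' = \tens{U}_{\spc{A}}''^{\ast} \tens{A}^{\dagger} \tens{U}_{\spc{A}}'\).

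Finally I would partition \(\tens{B}\) into \(2 \times 2\) blocks, splitting the rows according to \(\tens{U}_{\spc{A}}' = [\tens{U}_{\hat{\spc{X}}_{\spc{A}}}, \tens{U}_{\hat{\spc{X}}^{\perp}}]\) and the columns according to \(\tens{U}_{\spc{A}}'' = [\tens{U}_{\hat{\spc{Y}}_{\spc{A}}}, \tens{U}_{\hat{\spc{Y}}^{\perp}}]\). Then the \((2,2)\) block of \(\tens{B}\) is exactly \(\tens{U}_{\hat{\spc{X}}^{\perp}}^{\ast} \tens{A} \tens{U}_{\hat{\spc{Y}}^{\perp}}\) and the \((1,1)\) block of \(\tens{B}^{-1}\) is exactly \(\tens{U}_{\hat{\spc{Y}}_{\spc{A}}}^{\ast} \tens{A}^{\dagger} \tens{U}_{\hat{\spc{X}}_{\spc{A}}}\). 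By the first part of the lemma the \((2,2)\) block \(\tens{B}_{22}\) is non-singular, and \(\tens{B}\) is non-singular, so the Jacobi (block-inverse) identity gives that the \((1,1)\) block of \(\tens{B}^{-1}\) equals \((\tens{B}_{11} - \tens{B}_{12} \tens{B}_{22}^{-1} \tens{B}_{21})^{-1}\), the inverse of the Schur complement of \(\tens{B}_{22}\) in \(\tens{B}\); in particular \(\tens{U}_{\hat{\spc{Y}}_{\spc{A}}}^{\ast} \tens{A}^{\dagger} \tens{U}_{\hat{\spc{X}}_{\spc{A}}}\) is non-singular. The Schur determinant identity then yields \(\vert \tens{B} \vert = \vert \tens{B}_{22} \vert \, \vert \tens{B}_{11} - \tens{B}_{12} \tens{B}_{22}^{-1} \tens{B}_{21} \vert = \vert \tens{U}_{\hat{\spc{X}}^{\perp}}^{\ast} \tens{A} \tens{U}_{\hat{\spc{Y}}^{\perp}} \vert \, \vert \tens{U}_{\hat{\spc{Y}}_{\spc{A}}}^{\ast} \tens{A}^{\dagger} \tens{U}_{\hat{\spc{X}}_{\spc{A}}} \vert^{-1}\); rearranging and substituting \(\vert \tens{B} \vert = \vert \tens{U}_{\spc{A}}'^{\ast} \tens{A} \tens{U}_{\spc{A}}'' \vert = \vert \tens{U}_{\spc{A}}'^{\ast} \tens{U}_{\spc{A}}'' \vert \, \vert \tens{A} \vert_{\dagger}\) from \eqref{eq:UAU} produces \eqref{eq:UAU2}.

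The step I expect to be the main obstacle is the bookkeeping in the second part: verifying that \(\tilde{\tens{U}}_{\spc{A}}'\) and \(\tilde{\tens{U}}_{\spc{A}}''\) are genuinely unitary (which is precisely where the orthogonal decompositions of \(\spc{A}\) furnished by \Cref{lem:XYAplus} are indispensable), correctly matching the \((2,2)\) block of \(\tens{B}\) and the \((1,1)\) block of \(\tens{B}^{-1}\) with the claimed matrices, and being careful to invoke the first part of the lemma before applying the Schur-complement identities so that \(\tens{B}_{22}\) is known to be invertible. The block-inversion algebra and the determinant formula themselves are routine once the unitary reduction is in place.
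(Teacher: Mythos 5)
Your proof is correct, and both halves deviate from the paper's route in instructive ways. For the non-singularity of \(\tens{U}_{\hat{\spc{X}}^{\perp}}^{\ast} \tens{A} \tens{U}_{\hat{\spc{Y}}^{\perp}}\), the paper argues from the direct-sum condition \eqref{eq:A-ypx} via an analogue of the equivalence of conditions \ref{cond:c} and \ref{cond:d} in \Cref{lem:F}, whereas you route everything through \Cref{cor:pinv} and the fact that \(\tens{M}^{\dagger}\) is a bijection from \(\coimage(\tens{M}^\dagger)=\hat{\spc{Y}}^{\perp}\) onto \(\image(\tens{M}^{\dagger}) = \hat{\spc{X}}^{\perp}\); your version is more self-contained and avoids re-deriving the rank equivalence. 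For \eqref{eq:UAU2}, the two arguments produce literally the same intermediate identity (the paper's \eqref{eq:det-UAU4}), but by different bookkeeping: the paper verifies a block-triangular factorization \eqref{eq:UAU4} directly, using the resolution of identity \eqref{eq:UpU} and \(\tens{A}\tens{U}_{\spc{A}^{\perp}} = \tens{0}\) to establish the needed block relations, while you first observe that \(\tilde{\tens{U}}_{\spc{A}}'\) and \(\tilde{\tens{U}}_{\spc{A}}''\) are genuinely unitary (a sharper fact than \Cref{lem:UAU} records, and valid because the decompositions \(\spc{A} = \hat{\spc{X}}_{\spc{A}} \oplus \hat{\spc{X}}^{\perp} = \hat{\spc{Y}}_{\spc{A}} \oplus \hat{\spc{Y}}^{\perp}\) of \Cref{lem:XYAplus} are orthogonal), deduce the clean closed form \(\tens{B}^{-1} = \tens{U}_{\spc{A}}''^{\ast} \tens{A}^{\dagger} \tens{U}_{\spc{A}}'\), and then invoke the standard Jacobi/Schur-complement identities. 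Your packaging makes it transparent \emph{why} the \((1,1)\) block of \(\tens{A}^{\dagger}\) in these coordinates pairs with the \((2,2)\) block of \(\tens{A}\), and it delivers the non-singularity of \(\tens{U}_{\hat{\spc{Y}}_{\spc{A}}}^{\ast} \tens{A}^{\dagger} \tens{U}_{\hat{\spc{X}}_{\spc{A}}}\) for free from the invertibility of \(\tens{B}\) and \(\tens{B}_{22}\), exactly as the paper obtains it from \eqref{eq:UAU4}; the paper's version buys a fully elementary verification that never needs the explicit inverse of \(\tens{B}\). All the hypotheses you use (EP for \(\hat{\spc{Y}}^{\perp} \subseteq \spc{A}\) and for \(\tilde{\tens{A}}\) invertible in \eqref{eq:A-tilde}, and the matching block dimensions from \(\dim(\hat{\spc{X}}_{\spc{A}}) = \dim(\hat{\spc{Y}}_{\spc{A}})\)) are available where you invoke them, so no gap.
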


\begin{proof}
    Let \(\tens{G} \coloneqq \tens{U}_{\hat{\spc{X}}^{\perp}}^{\ast} \tens{A} \tens{U}_{\hat{\spc{Y}}^{\perp}}\). Recall from \eqref{eq:A-ypx} that \(\tens{A} \hat{\spc{Y}}^{\perp} \oplus \hat{\spc{X}} = \mathbb{C}^n\). Similar to the equivalency of \ref{cond:c} and \ref{cond:d} in \Cref{lem:F}, we can accordingly say the conditions \(\tens{A} \hat{\spc{Y}}^{\perp} + \hat{\spc{X}} = \mathbb{C}^n\) and \(\ker(\tens{G}) = \ker(\tens{U}_{\hat{\spc{X}}^{\perp}})\) are equivalent. But because \(\tens{U}_{\hat{\spc{X}}^{\perp}}\) is full-rank, so is \(\tens{G}\). Note \(\tens{G}\) is also square since by \eqref{eq:dim-xy} we have \(\dim(\hat{\spc{X}}^{\perp}) = \dim(\hat{\spc{Y}}^{\perp})\). Thus, \(\tens{G}\) is non-singular.

    We now show \eqref{eq:UAU2}. Recall from \subeqref{eq:UxUyUa}{b} and \subeqref{eq:Up12}{b} that \(\tens{U}'' = [\tens{U}_{\hat{\spc{Y}}_{\spc{A}}}, \tens{U}_{\spc{A}^{\perp}}, \tens{U}_{\hat{\spc{Y}}^{\perp}}]\) is unitary, \ie
    \begin{equation}
        \tens{U}_{\hat{\spc{Y}}_{\spc{A}}} \tens{U}_{\hat{\spc{Y}}_{\spc{A}}}^{\ast} +
        \tens{U}_{\hat{\spc{Y}}^{\perp}} \tens{U}_{\hat{\spc{Y}}^{\perp}}^{\ast}
        = \tens{I} -
        \tens{U}_{\spc{A}^{\perp}} \tens{U}_{\spc{A}^{\perp}}^{\ast}. \label{eq:UpU}
    \end{equation}
    Also, consider \(\tens{P}_{\spc{A}} = \tens{A} \tens{A}^{\dagger}\) and \(\tens{A} \tens{U}_{\spc{A}^{\perp}} = \tens{0}\) since \(\spc{A}^{\perp} = \ker(\tens{A})\) as \(\tens{A}\) is EP. From \eqref{eq:UpU} we obtain
    \begin{equation}
        \tens{A} \left(\tens{U}_{\hat{\spc{Y}}_{\spc{A}}} \tens{U}_{\hat{\spc{Y}}_{\spc{A}}}^{\ast} +
        \tens{U}_{\hat{\spc{Y}}^{\perp}} \tens{U}_{\hat{\spc{Y}}^{\perp}}^{\ast} \right) \tens{A}^{\dagger}
        = \tens{P}_{\spc{A}}. \label{eq:UpU2}
    \end{equation}
    Moreover, consider \(\tens{P}_{\spc{A}} \tens{U}_{\hat{\spc{X}}_{\spc{A}}} = \tens{U}_{\hat{\spc{X}}_{\spc{A}}}\) since \(\image(\tens{U}_{\hat{\spc{X}}_{\spc{A}}}) = \hat{\spc{X}}_{\spc{A}} \subseteq \spc{A}\) by \eqref{eq:def-XA}. So \eqref{eq:UpU2} implies
    \begin{subequations}
    \begin{align}
        &\tens{U}_{\hat{\spc{X}}_{\spc{A}}}^{\ast}
        \tens{A} \left(\tens{U}_{\hat{\spc{Y}}_{\spc{A}}} \tens{U}_{\hat{\spc{Y}}_{\spc{A}}}^{\ast} +
        \tens{U}_{\hat{\spc{Y}}^{\perp}} \tens{U}_{\hat{\spc{Y}}^{\perp}}^{\ast} \right) \tens{A}^{\dagger}
        \tens{U}_{\hat{\spc{X}}_{\spc{A}}} = \tens{I}, \label{eq:UYU1}\\
        &\tens{U}_{\hat{\spc{X}}^{\perp}}^{\ast}
        \tens{A} \left(\tens{U}_{\hat{\spc{Y}}_{\spc{A}}} \tens{U}_{\hat{\spc{Y}}_{\spc{A}}}^{\ast} +
        \tens{U}_{\hat{\spc{Y}}^{\perp}} \tens{U}_{\hat{\spc{Y}}^{\perp}}^{\ast} \right) \tens{A}^{\dagger}
        \tens{U}_{\hat{\spc{X}}_{\spc{A}}} = \tens{0}, \label{eq:UYU2}
    \end{align}
    \end{subequations}
    because \(\tens{U}_{\hat{\spc{X}}_{\spc{A}}}^{\ast} \tens{U}_{\hat{\spc{X}}_{\sp{A}}} = \tens{I}\) and \(\tens{U}_{\hat{\spc{X}}^{\perp}}^{\ast} \tens{U}_{\hat{\spc{X}}_{\spc{A}}} = \tens{0}\). On the other hand, observe from \subeqref{eq:UAp12}{a} and \subeqref{eq:UAp12}{b} that
    \begin{equation}
        \tens{U}_{\spc{A}}'^{\ast} \tens{A} \tens{U}_{\spc{A}}'' =
        \begin{bmatrix}
            \tens{U}_{\hat{\spc{X}}_{\spc{A}}}^{\ast} \tens{A} \tens{U}_{\hat{\spc{Y}}_{\spc{A}}} &
            \tens{U}_{\hat{\spc{X}}_{\spc{A}}}^{\ast} \tens{A} \tens{U}_{\hat{\spc{Y}}^{\perp}} \\
            \tens{U}_{\hat{\spc{X}}^{\perp}}^{\ast} \tens{A} \tens{U}_{\hat{\spc{Y}}_{\spc{A}}} &
            \tens{U}_{\hat{\spc{X}}^{\perp}}^{\ast} \tens{A} \tens{U}_{\hat{\spc{Y}}^{\perp}}
        \end{bmatrix}. \label{eq:UAU3}
    \end{equation}
    Based on \eqref{eq:UYU1}, \eqref{eq:UYU2}, and \eqref{eq:UAU3}, we calculate
    \begin{equation}
        \left(\tens{U}_{\spc{A}}'^{\ast} \tens{A} \tens{U}_{\spc{A}}'' \right) \,
        \begin{bmatrix}
            \tens{U}_{\hat{\spc{Y}}_{\spc{A}}}^{\ast} \tens{A}^{\dagger} \tens{U}_{\hat{\spc{X}}_{\spc{A}}} &
            \tens{0} \\
            \tens{U}_{\hat{\spc{Y}}^{\perp}}^{\ast} \tens{A}^{\dagger} \tens{U}_{\hat{\spc{X}}_{\spc{A}}} &
            \tens{I}
        \end{bmatrix}
        =
        \begin{bmatrix}
            \tens{I} &
            \tens{U}_{\hat{\spc{X}}_{\spc{A}}}^{\ast} \tens{A} \tens{U}_{\hat{\spc{Y}}^{\perp}} \\
            \tens{0} &
            \tens{U}_{\hat{\spc{X}}^{\perp}}^{\ast} \tens{A} \tens{U}_{\hat{\spc{Y}}^{\perp}} \\
        \end{bmatrix}. \label{eq:UAU4}
    \end{equation}
    Recall from \Cref{lem:UAU} that \(\tens{U}_{\spc{A}}'^{\ast} \tens{A} \tens{U}_{\spc{A}}''\) is non-singular if \(\tens{A}\) is EP. We also found that \(\tens{U}_{\hat{\spc{X}}^{\perp}}^{\ast} \tens{A} \tens{U}_{\hat{\spc{Y}}^{\perp}}\) is non-singular, so the right-hand side of \eqref{eq:UAU4}, the block lower-triangular matrix on the left-hand side of \eqref{eq:UAU4}, and accordingly, \(\tens{U}_{\hat{\spc{Y}}_{\spc{A}}}^{\ast} \tens{A}^{\dagger} \tens{U}_{\hat{\spc{X}}_{\spc{A}}}\) are non-singular. Also, the determinant of \eqref{eq:UAU4} yields
    \begin{equation}
        \vert \tens{U}_{\spc{A}}'^{\ast} \tens{A} \tens{U}_{\spc{A}}'' \vert \,
        \vert \tens{U}_{\hat{\spc{Y}}_{\spc{A}}}^{\ast} \tens{A}^{\dagger} \tens{U}_{\hat{\spc{X}}_{\spc{A}}} \vert =
        \vert \tens{U}_{\hat{\spc{X}}^{\perp}}^{\ast} \tens{A} \tens{U}_{\hat{\spc{Y}}^{\perp}} \vert. \label{eq:det-UAU4}
    \end{equation}
    Substituting \eqref{eq:UAU} in \eqref{eq:det-UAU4} concludes \eqref{eq:UAU2}.
\end{proof}

\begin{lemma} \label{lem:pUxUy}
    Suppose \(\tens{A}\) is square and \(\ind(\tens{M}) = 1\). Then, \(\tens{U}_{\hat{\spc{X}}^{\perp}}^{\ast} \tens{U}_{\hat{\spc{Y}}^{\perp}}\) and \(\tens{U}_{\hat{\spc{Y}}_{\spc{A}}}^{\ast} \tens{U}_{\hat{\spc{X}}_{\spc{A}}}\) are non-singular and
    \begin{equation}
       \vert \tens{U}_{\hat{\spc{X}}^{\perp}}^{\ast} \tens{U}_{\hat{\spc{Y}}^{\perp}} \vert =
       \vert \tens{U}_{\hat{\spc{Y}}_{\spc{A}}}^{\ast} \tens{U}_{\hat{\spc{X}}_{\spc{A}}} \vert \,
       \vert \tens{U}_{\spc{A}}'^{\ast} \tens{U}_{\spc{A}}'' \vert. \label{eq:UU2}
    \end{equation}
\end{lemma}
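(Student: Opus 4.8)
The plan is to mirror the proof of \Cref{lem:pUxAUy}, specialized to the case $\tens{A} = \tens{I}$, so that the orthogonal projection $\tens{P}_{\spc{A}} = \tens{A}\tens{A}^{\dagger}$ now plays, by itself, the role the sandwich $\tens{A}(\cdot)\tens{A}^{\dagger}$ played there. Concretely I will establish two facts: (i) $\tens{U}_{\hat{\spc{X}}^{\perp}}^{\ast}\tens{U}_{\hat{\spc{Y}}^{\perp}}$ is non-singular, and (ii) the block-triangular factorization
\[
    \left(\tens{U}_{\spc{A}}'^{\ast}\tens{U}_{\spc{A}}''\right)
    \begin{bmatrix}
        \tens{U}_{\hat{\spc{Y}}_{\spc{A}}}^{\ast}\tens{U}_{\hat{\spc{X}}_{\spc{A}}} & \tens{0} \\
        \tens{U}_{\hat{\spc{Y}}^{\perp}}^{\ast}\tens{U}_{\hat{\spc{X}}_{\spc{A}}} & \tens{I}
    \end{bmatrix}
    =
    \begin{bmatrix}
        \tens{I} & \tens{U}_{\hat{\spc{X}}_{\spc{A}}}^{\ast}\tens{U}_{\hat{\spc{Y}}^{\perp}} \\
        \tens{0} & \tens{U}_{\hat{\spc{X}}^{\perp}}^{\ast}\tens{U}_{\hat{\spc{Y}}^{\perp}}
    \end{bmatrix},
\]
which is the $\tens{A} = \tens{I}$ analogue of \eqref{eq:UAU4}. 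Taking determinants of this identity will then give \eqref{eq:UU2}, while combining it with (i) will force the non-singularity of $\tens{U}_{\hat{\spc{Y}}_{\spc{A}}}^{\ast}\tens{U}_{\hat{\spc{X}}_{\spc{A}}}$.

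For (i), since $\ind(\tens{M}) = 1$, \Cref{prop:ind-1} gives $\hat{\spc{Y}}^{\perp}\oplus\hat{\spc{X}} = \mathbb{C}^n$, hence in particular $\hat{\spc{Y}}^{\perp}\cap\hat{\spc{X}} = \{\vect{0}\}$. A vector in $\ker(\tens{U}_{\hat{\spc{X}}^{\perp}}^{\ast}\tens{U}_{\hat{\spc{Y}}^{\perp}})$ is sent by $\tens{U}_{\hat{\spc{Y}}^{\perp}}$ into $\hat{\spc{Y}}^{\perp}\cap\ker(\tens{U}_{\hat{\spc{X}}^{\perp}}^{\ast}) = \hat{\spc{Y}}^{\perp}\cap\hat{\spc{X}} = \{\vect{0}\}$, so it vanishes because $\tens{U}_{\hat{\spc{Y}}^{\perp}}$ has full column rank; and the matrix is square by \eqref{eq:dim-xy}.

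For the factorization (ii), the key point is that $[\tens{U}_{\hat{\spc{Y}}_{\spc{A}}},\tens{U}_{\spc{A}^{\perp}},\tens{U}_{\hat{\spc{Y}}^{\perp}}]$ is unitary: its columns are orthonormal because $\hat{\spc{Y}}_{\spc{A}}\subseteq\hat{\spc{Y}}\perp\hat{\spc{Y}}^{\perp}$ and $\hat{\spc{Y}}_{\spc{A}},\hat{\spc{Y}}^{\perp}\subseteq\spc{A}\perp\spc{A}^{\perp}$ (both using \Cref{lem:XYAplus}), and they span $\mathbb{C}^n$ by \subeqref{eq:YAplus}{b}. Hence $\tens{U}_{\hat{\spc{Y}}_{\spc{A}}}\tens{U}_{\hat{\spc{Y}}_{\spc{A}}}^{\ast} + \tens{U}_{\hat{\spc{Y}}^{\perp}}\tens{U}_{\hat{\spc{Y}}^{\perp}}^{\ast} = \tens{I} - \tens{U}_{\spc{A}^{\perp}}\tens{U}_{\spc{A}^{\perp}}^{\ast} = \tens{P}_{\spc{A}}$. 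Multiplying this on the left by $\tens{U}_{\hat{\spc{X}}_{\spc{A}}}^{\ast}$ (respectively $\tens{U}_{\hat{\spc{X}}^{\perp}}^{\ast}$) and on the right by $\tens{U}_{\hat{\spc{X}}_{\spc{A}}}$, and using $\tens{P}_{\spc{A}}\tens{U}_{\hat{\spc{X}}_{\spc{A}}} = \tens{U}_{\hat{\spc{X}}_{\spc{A}}}$ (since $\hat{\spc{X}}_{\spc{A}}\subseteq\spc{A}$), $\tens{U}_{\hat{\spc{X}}_{\spc{A}}}^{\ast}\tens{U}_{\hat{\spc{X}}_{\spc{A}}} = \tens{I}$, and $\tens{U}_{\hat{\spc{X}}^{\perp}}^{\ast}\tens{U}_{\hat{\spc{X}}_{\spc{A}}} = \tens{0}$, yields the $(1,1)$- and $(2,1)$-block relations; together with the block form of $\tens{U}_{\spc{A}}'^{\ast}\tens{U}_{\spc{A}}''$ obtained by reading off \subeqref{eq:UAp12}{a} and \subeqref{eq:UAp12}{b}, these assemble into the displayed factorization.

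To finish, I take determinants: the left factor has determinant $\vert\tens{U}_{\spc{A}}'^{\ast}\tens{U}_{\spc{A}}''\vert$ (non-singular by \Cref{lem:UAU}) times the block-lower-triangular determinant $\vert\tens{U}_{\hat{\spc{Y}}_{\spc{A}}}^{\ast}\tens{U}_{\hat{\spc{X}}_{\spc{A}}}\vert$, while the right-hand side has determinant $\vert\tens{U}_{\hat{\spc{X}}^{\perp}}^{\ast}\tens{U}_{\hat{\spc{Y}}^{\perp}}\vert$, which is nonzero by (i); hence $\tens{U}_{\hat{\spc{Y}}_{\spc{A}}}^{\ast}\tens{U}_{\hat{\spc{X}}_{\spc{A}}}$ is non-singular and rearranging gives \eqref{eq:UU2}. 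I expect no serious obstacle — this is a routine specialization of \Cref{lem:pUxAUy} — the only point needing care being the verification that the columns of $[\tens{U}_{\hat{\spc{Y}}_{\spc{A}}},\tens{U}_{\spc{A}^{\perp}},\tens{U}_{\hat{\spc{Y}}^{\perp}}]$ are actually orthonormal rather than merely a direct-sum basis, which is exactly where $\hat{\spc{Y}}_{\spc{A}}\subseteq\hat{\spc{Y}}$ and \Cref{lem:XYAplus} enter.
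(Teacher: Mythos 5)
Your proof is correct and follows essentially the same route as the paper: the paper's proof simply invokes \Cref{lem:pUxAUy} with \(\tens{A}\) replaced by \(\tens{P}_{\spc{A}}\) (using \(\vert\tens{P}_{\spc{A}}\vert_{\dagger}=1\), \(\tens{P}_{\spc{A}}^{\dagger}=\tens{P}_{\spc{A}}\), and \(\tens{P}_{\spc{A}}\tens{U}_{\hat{\spc{Y}}^{\perp}}=\tens{U}_{\hat{\spc{Y}}^{\perp}}\)), and your block factorization is exactly \eqref{eq:UAU4} under that substitution, with the unitarity of \([\tens{U}_{\hat{\spc{Y}}_{\spc{A}}},\tens{U}_{\spc{A}^{\perp}},\tens{U}_{\hat{\spc{Y}}^{\perp}}]\) being the paper's \eqref{eq:UpU}. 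Your direct kernel argument for the non-singularity of \(\tens{U}_{\hat{\spc{X}}^{\perp}}^{\ast}\tens{U}_{\hat{\spc{Y}}^{\perp}}\) is a minor, valid variation on the same content.
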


\begin{proof}
    We can obtain \eqref{eq:UU2} by replacing \(\tens{A}\) in \eqref{eq:UAU2} with \(\tens{P}_{\spc{A}}\), since it has all properties of \(\tens{A}\) required in \eqref{eq:UAU2}. Namely, \(\image(\tens{P}_{\spc{A}}) = \image(\tens{A})\), and \(\ker(\tens{P}_{\spc{A}}) = \ker(\tens{A}) = \spc{A}^{\perp}\) since \(\tens{A}\) is EP. Note the eigenvalues of a projection matrix are \(0\) or \(1\), so \(\vert \tens{P}_{\spc{A}} \vert_{\dagger} = 1\). Also, \(\tens{P}_{\spc{A}}^{\dagger} \tens{U}_{\hat{\spc{X}}_{\spc{A}}} = \tens{U}_{\hat{\spc{X}}_{\spc{A}}}\) since \(\tens{P}_{\spc{A}}^{\dagger} = \tens{P}_{\spc{A}}\) and \(\hat{\spc{X}}_{\spc{A}} \subseteq \spc{A}\). Thus, \Cref{lem:pUxAUy} implies \eqref{eq:UU2} and the non-singularity of its terms.
\end{proof}

\begin{proof}[Proof of \Cref{prop:pMp}]
    By using \(\tens{U}'\) from \subeqref{eq:Up12}{a}, we observe that \(\tens{U}'^{\ast} \tens{M} \tens{U}'\) and \(\tens{M}\) are unitarily similar, so \(\vert \tens{U}'^{\ast} \tens{M} \tens{U}' \vert_{\dagger} = \vert \tens{M} \vert_{\dagger}\). With \(\tens{U}' = [\tens{U}_{\hat{\spc{X}}}, \tens{U}_{\hat{\spc{X}}^{\perp}}]\) and \(\tens{M} \tens{U}_{\hat{\spc{X}}} = \tens{0}\) (since \(\hat{\spc{X}} = \ker(\tens{M})\)), we calculate
    \begin{equation}
        \tens{U}'^{\ast} \tens{M} \tens{U}' =
        \begin{bmatrix}
            \tens{0} & \tens{U}_{\hat{\spc{X}}}^{\ast} \tens{M} \tens{U}_{\hat{\spc{X}}^{\perp}} \\
            \tens{0} & \tens{U}_{\hat{\spc{X}}^{\perp}}^{\ast} \tens{M} \tens{U}_{\hat{\spc{X}}^{\perp}}
        \end{bmatrix}.
    \end{equation}
    The eigenvalues of the above block upper-triangular matrix are those of its diagonal blocks \citep[p. 62]{HORN-1990}. Also, from \Cref{cor:comp} we know \(\tens{U}_{\hat{\spc{X}}^{\perp}}^{\ast} \tens{M} \tens{U}_{\hat{\spc{X}}^{\perp}}\) is non-singular. Hence,
    \begin{equation}
        \vert \tens{M} \vert_{\dagger} = \vert \tens{U}_{\hat{\spc{X}}^{\perp}}^{\ast} \tens{M} \tens{U}_{\hat{\spc{X}}^{\perp}} \vert. \label{eq:det-UxMUx}
    \end{equation}
    Moreover, from \Cref{cor:pinv}, we have \(\tens{M} = (\tens{P}_{\hat{\spc{X}}^{\perp}} \tens{A} \tens{P}_{\hat{\spc{Y}}^{\perp}})^{\dagger}\). Set \(\tens{P}_{\hat{\spc{X}}^{\perp}} = \tens{U}_{\hat{\spc{X}}^{\perp}} \tens{U}_{\hat{\spc{X}}^{\perp}}^{\ast}\) and \(\tens{P}_{\hat{\spc{Y}}^{\perp}} = \tens{U}_{\hat{\spc{Y}}^{\perp}} \tens{U}_{\hat{\spc{Y}}^{\perp}}^{\ast}\). Also, recall from \Cref{lem:pUxAUy} that \(\tens{U}_{\hat{\spc{X}}^{\perp}}^{\ast} \tens{A} \tens{U}_{\hat{\spc{Y}}^{\perp}}\) is non-singular. All together, from \eqref{eq:det-UxMUx} we have
    \begin{subequations}
    \begin{align}
        \vert \tens{M} \vert_{\dagger} &= \vert \tens{U}_{\hat{\spc{X}}^{\perp}}^{\ast} \big( \tens{U}_{\hat{\spc{X}}^{\perp}} \tens{U}_{\hat{\spc{X}}^{\perp}}^{\ast} \tens{A} \tens{U}_{\hat{\spc{Y}}^{\perp}} \tens{U}_{\hat{\spc{Y}}^{\perp}}^{\ast} \big)^{\dagger} \tens{U}_{\hat{\spc{X}}^{\perp}} \vert \label{eq:pinv-out0} \\
                                       &= \vert \tens{U}_{\hat{\spc{X}}^{\perp}}^{\ast} \tens{U}_{\hat{\spc{Y}}^{\perp}} \big( \tens{U}_{\hat{\spc{X}}^{\perp}}^{\ast} \tens{A} \tens{U}_{\hat{\spc{Y}}^{\perp}} \big)^{-1} \tens{U}_{\hat{\spc{X}}^{\perp}}^{\ast} \tens{U}_{\hat{\spc{X}}^{\perp}} \vert \label{eq:pinv-out} \\
                                       &= \vert \tens{U}_{\hat{\spc{X}}^{\perp}}^{\ast} \tens{U}_{\hat{\spc{Y}}^{\perp}} \vert \, \vert \tens{U}_{\hat{\spc{X}}^{\perp}}^{\ast} \tens{A} \tens{U}_{\hat{\spc{Y}}^{\perp}} \vert^{-1}, \label{eq:pinv-out2}
    \end{align}
    \end{subequations}
    which concludes the first equality of \eqref{eq:pinv-M-p}. Note that to obtain \eqref{eq:pinv-out}, the reverse-order law for the product of matrices under pseudo-inverse is applied to \(\tens{U}_{\hat{\spc{X}}^{\perp}}\) and \(\tens{U}_{\hat{\spc{Y}}^{\perp}}^{\ast}\) in \eqref{eq:pinv-out0} because they have orthonormal columns and rows, respectively (see \eg \citep[Corollary 1.4.3]{CAMPBELL-2009} or \citep[Equations \(3'\), \(4'\)]{BOULDIN-1973}). Also, to obtain \eqref{eq:pinv-out2}, we used the result of \Cref{lem:pUxUy} that \(\tens{U}_{\hat{\spc{X}}^{\perp}}^{\ast} \tens{U}_{\hat{\spc{Y}}^{\perp}}\) is non-singular. Finally, by combining \Cref{lem:pUxAUy} and \Cref{lem:pUxUy} with the first equality of \eqref{eq:pinv-M-p}, its second equality is obtained.
\end{proof}

\begin{proof}[Proof of \Cref{thm:pdet}]
    We simplify the second equality of \eqref{eq:pinv-M-p} as follows.

    \begin{enumerate}[leftmargin=*,align=left,label*=\emph{Step (\roman*).},ref=step (\roman*),wide]
        \item\label{step:lemM1} From \subeqref{eq:XAplus}{a} we have \(\image(\tens{P}_{\spc{A}} \hat{\tens{X}}) = \tens{P}_{\spc{A}} (\hat{\spc{X}}_{\spc{A}} \oplus \spc{A}^{\perp}) = \hat{\spc{X}}_{\spc{A}}\). Similarly, from \subeqref{eq:YAplus}{a} we can show \(\image(\tens{P}_{\spc{A}} \hat{\tens{Y}}) = \hat{\spc{Y}}_{\spc{A}}\). So, without loss of generality, let \(\tens{U}_{\hat{\spc{X}}_{\spc{A}}}\) and \(\tens{U}_{\hat{\spc{Y}}_{\spc{A}}}\) respectively be the matrices of the left singular vectors of \(\tens{P}_{\spc{A}} \hat{\tens{X}}\) and \(\tens{P}_{\spc{A}} \hat{\tens{Y}}\) given by the singular value decompositions
    \begin{equation}
        \tens{P}_{\spc{A}} \hat{\tens{X}} = \tens{U}_{\hat{\spc{X}}_{\spc{A}}} \gtens{\Sigma}_{\hat{\spc{X}}_{\spc{A}}} \tens{V}_{\hat{\spc{X}}_{\spc{A}}}^{\ast},
        \quad \text{and} \quad
        \tens{P}_{\spc{A}} \hat{\tens{Y}} = \tens{U}_{\hat{\spc{Y}}_{\spc{A}}} \gtens{\Sigma}_{\hat{\spc{Y}}_{\spc{A}}} \tens{V}_{\hat{\spc{Y}}_{\spc{A}}}^{\ast}, \label{eq:PXY-svd}
    \end{equation}
    where \(\gtens{\Sigma}_{\hat{\spc{X}}_{\spc{A}}}\) and \(\gtens{\Sigma}_{\hat{\spc{Y}}_{\spc{A}}}\) are diagonal and non-singular, and \(\tens{V}_{\hat{\spc{X}}_{\spc{A}}}\) and \(\tens{V}_{\hat{\spc{Y}}_{\spc{A}}}\) each have orthonormal columns. Also, \(\tens{P}_{\spc{A}}\) is Hermitian, so \(\tens{P}_{\spc{A}}^{\ast} \tens{P}_{\spc{A}} = \tens{P}_{\spc{A}}\). Furthermore, since \(\tens{A}\) is EP, we have \(\image(\tens{A}^{\dagger}) = \image(\tens{A}^{\ast}) = \spc{A}\), hence, \(\tens{P}_{\spc{A}}^{\ast} \tens{A}^{\dagger} \tens{P}_{\spc{A}} = \tens{A}^{\dagger}\). Based on these, the second equality of \eqref{eq:pinv-M-p} becomes
    \begin{equation}
        \vert \tens{M} \vert_{\dagger} =
        \frac{\vert \tens{V}_{\hat{\spc{Y}}_{\spc{A}}}^{\ast} \hat{\tens{Y}}^{\ast} \tens{P}_{\spc{A}}^{\ast} \tens{P}_{\spc{A}} \hat{\tens{X}} \tens{V}_{\hat{\spc{X}}_{\spc{A}}} \vert}
        {\vert \tens{A} \vert_{\dagger} \, 
        \vert \tens{V}_{\hat{\spc{Y}}_{\spc{A}}}^{\ast} \hat{\tens{Y}}^{\ast} \tens{P}_{\spc{A}}^{\ast} \tens{A}^{\dagger} \tens{P}_{\spc{A}} \hat{\tens{X}} \tens{V}_{\hat{\spc{X}}_{\spc{A}}} \vert} =
        \frac{\vert \tens{V}_{\hat{\spc{Y}}_{\spc{A}}}^{\ast} \tens{G} \tens{V}_{\hat{\spc{X}}_{\spc{A}}} \vert}
        {\vert \tens{A} \vert_{\dagger} \, 
        \vert \tens{V}_{\hat{\spc{Y}}_{\spc{A}}}^{\ast} \tens{H} \tens{V}_{\hat{\spc{X}}_{\spc{A}}} \vert}, \label{eq:UKU}
    \end{equation}
    where \(\tens{G} \coloneqq \hat{\tens{Y}}^{\ast} \tens{P}_{\spc{A}} \hat{\tens{X}}\) and \(\tens{H} \coloneqq \hat{\tens{Y}}^{\ast} \tens{A}^{\dagger} \hat{\tens{X}}\).

\item\label{step:lemM2} From \eqref{eq:PXY-svd}, we have \(\ker(\tens{P}_{\spc{A}} \hat{\tens{X}}) = \ker(\tens{V}_{\hat{\spc{X}}_{\spc{A}}}^{\ast})\) and \((\ker(\tens{P}_{\spc{A}} \hat{\tens{Y}}))^{\perp} = (\ker(\tens{V}_{\hat{\spc{Y}}_{\spc{A}}}^{\ast}))^{\perp} = \image(\tens{V}_{\hat{\spc{Y}}_{\spc{A}}})\). Thus, \eqref{eq:ker-PXY} becomes \(\ker(\tens{V}_{\hat{\spc{X}}_{\spc{A}}}^{\ast}) \cap \image(\tens{V}_{\hat{\spc{Y}}_{\spc{A}}}) = \{\vect{0}\}\), which implies \(\tens{V}_{\hat{\spc{X}}_{\spc{A}}}^{\ast} \tens{V}_{\hat{\spc{Y}}_{\spc{A}}}\) is full-rank. But from \Cref{lem:XYAplus}, we know \(\dim(\hat{\spc{X}}_{\spc{A}}) = \dim(\hat{\spc{Y}}_{\spc{A}})\), so \(\tens{V}_{\hat{\spc{X}}_{\spc{A}}}^{\ast} \tens{V}_{\hat{\spc{Y}}_{\spc{A}}}\) is square, hence non-singular.

    \item\label{step:lemM3} We calculate \(\vert \tens{V}_{\hat{\spc{Y}}_{\spc{A}}}^{\ast} \tens{G} \tens{V}_{\hat{\spc{X}}_{\spc{A}}} \vert\) in \eqref{eq:UKU}. Define the unitary matrix \(\tens{V} \coloneqq [\tens{V}_{\hat{\spc{X}}_{\spc{A}}}, \tens{V}_{\hat{\spc{X}}_{\spc{A}}}']\) where \(\tens{V}_{\hat{\spc{X}}_{\spc{A}}}'\) is the orthonormal complement of \(\tens{V}_{\hat{\spc{X}}_{\spc{A}}}\) in \(\mathbb{C}^n\). The matrices \(\tens{G}\) and \(\tens{V}^{\ast} \tens{G} \tens{V}\) are unitarily similar, so \(\vert \tens{G} \vert_{\dagger} = \vert \tens{V}^{\ast} \tens{G} \tens{V} \vert_{\dagger}\). We calculate
    \begin{equation}
        \tens{V}^{\ast} \tens{G} \tens{V} =
        \begin{bmatrix}
            \tens{V}_{\hat{\spc{X}}_{\spc{A}}}^{\ast} \tens{G} \tens{V}_{\hat{\spc{X}}_{\spc{A}}} & \tens{0} \\
            \tens{V}_{\hat{\spc{X}}_{\spc{A}}}'^{\ast} \tens{G} \tens{V}_{\hat{\spc{X}}_{\spc{A}}} & \tens{0}
        \end{bmatrix}. \label{eq:VGV}
    \end{equation}
    In the above, \(\tens{G} \tens{V}_{\hat{\spc{X}}_{\spc{A}}}' = \tens{0}\) since \(\tens{V}_{\hat{\spc{X}}_{\spc{A}}}^{\ast} \tens{V}_{\hat{\spc{X}}_{\spc{A}}}' = \tens{0}\). From \eqref{eq:VGV} we obtain \(\vert \tens{G} \vert_{\dagger} = \vert \tens{V}_{\hat{\spc{X}}_{\spc{A}}}^{\ast} \tens{G} \tens{V}_{\hat{\spc{X}}_{\spc{A}}} \vert_{\dagger}\). Moreover, from \eqref{eq:PXY-svd} and the definition of \(\tens{G}\), we have \(\tens{V}_{\hat{\spc{X}}_{\spc{A}}}^{\ast} \tens{G} \tens{V}_{\hat{\spc{X}}_{\spc{A}}} = (\tens{V}_{\hat{\spc{X}}_{\spc{A}}}^{\ast} \tens{V}_{\hat{\spc{Y}}_{\spc{A}}}) (\tens{V}_{\hat{\spc{Y}}_{\spc{A}}}^{\ast} \tens{G} \tens{V}_{\hat{\spc{X}}_{\spc{A}}})\). Also, \eqref{eq:PXY-svd} implies \(\tens{V}_{\hat{\spc{Y}}_{\spc{A}}}^{\ast} \tens{G} \tens{V}_{\hat{\spc{X}}_{\spc{A}}} = \gtens{\Sigma}_{\hat{\spc{Y}}_{\spc{A}}} (\tens{U}_{\hat{\spc{Y}}_{\spc{A}}}^{\ast} \tens{U}_{\hat{\spc{X}}_{\spc{A}}}) \gtens{\Sigma}_{\hat{\spc{X}}_{\spc{A}}}\). But from \Cref{lem:pUxUy}, we know \(\tens{U}_{\hat{\spc{Y}}_{\spc{A}}}^{\ast} \tens{U}_{\hat{\spc{X}}_{\spc{A}}}\) is non-singular, so \(\tens{V}_{\hat{\spc{Y}}_{\spc{A}}}^{\ast} \tens{G} \tens{V}_{\hat{\spc{X}}_{\spc{A}}}\) is non-singular. Also, in \ref{step:lemM2}, we showed \(\tens{V}_{\hat{\spc{X}}_{\spc{A}}}^{\ast} \tens{V}_{\hat{\spc{Y}}_{\spc{A}}}\) is non-singular, so \(\tens{V}_{\hat{\spc{X}}_{\spc{A}}}^{\ast} \tens{G} \tens{V}_{\hat{\spc{X}}_{\spc{A}}}\) is non-singular. All together, we obtain
    \begin{equation}
        \vert \tens{V}_{\hat{\spc{Y}}_{\spc{A}}}^{\ast} \tens{G} \tens{V}_{\hat{\spc{X}}_{\spc{A}}} \vert =
        \vert \tens{G} \vert_{\dagger} \, \vert \tens{V}_{\hat{\spc{X}}_{\spc{A}}}^{\ast} \tens{V}_{\hat{\spc{Y}}_{\spc{A}}} \vert^{-1}.
        \label{eq:pG2}
    \end{equation}

    \item\label{step:lemM4} By repeating \ref{step:lemM3} but for \(\tens{H}\) (instead of \(\tens{G}\)), we can similarly show
    \begin{equation}
        \vert \tens{V}_{\hat{\spc{Y}}_{\spc{A}}}^{\ast} \tens{H} \tens{V}_{\hat{\spc{X}}_{\spc{A}}} \vert =
        \vert \tens{H} \vert_{\dagger} \, \vert \tens{V}_{\hat{\spc{X}}_{\spc{A}}}^{\ast} \tens{V}_{\hat{\spc{Y}}_{\spc{A}}} \vert^{-1}.
        \label{eq:pH2}
    \end{equation}
    Substituting \eqref{eq:pG2} and \eqref{eq:pH2} in \eqref{eq:UKU} concludes \eqref{eq:pdet}. \qedhere
    \end{enumerate}
\end{proof}

\begin{proof}[Proof of \Cref{prop:pdet-M-N}]
    By using \(\tens{U}'\) from \subeqref{eq:Up12}{a}, we observe that \(\tens{N}\) and \(\tens{U}'^{\ast} \tens{N} \tens{U}'\) are unitarily similar, so \(\vert \tens{N} \vert = \vert \tens{U}'^{\ast} \tens{N} \tens{U}' \vert\). Also, from \subeqref{eq:N-rest}{a} we have \(\tens{N} \tens{U}_{\hat{\spc{X}}} = \tens{U}_{\hat{\spc{X}}}\). We calculate
    \begin{equation*}
        \tens{U}'^{\ast} \tens{N} \tens{U}' =
        \begin{bmatrix}
            \tens{I} & \tens{U}_{\hat{\spc{X}}}^{\ast} \tens{N} \tens{U}_{\hat{\spc{X}}^{\perp}} \\
            \tens{0} & \tens{U}_{\hat{\spc{X}}^{\perp}}^{\ast} \tens{N} \tens{U}_{\hat{\spc{X}}^{\perp}}
        \end{bmatrix}. \label{eq:N-basis}
    \end{equation*}
    The determinant of the above relation yields \(\vert \tens{N} \vert = \vert \tens{U}_{\hat{\spc{X}}^{\perp}}^{\ast} \tens{N} \tens{U}_{\hat{\spc{X}}^{\perp}} \vert\). By substituting this result together with \eqref{eq:det-UxMUx} in the determinant of \eqref{eq:MNI}, we conclude \eqref{eq:det-N}.
\end{proof}

\begin{proof}[Proof of \Cref{cor:det-N}]
    Combining \eqref{eq:pdet} and \eqref{eq:det-N} and omitting the hat notations yields \eqref{eq:det-MN}. 
\end{proof}


\setcounter{equation}{0}
\renewcommand\theequation{B.\arabic{equation}}

\section{Gaussian Process Regression} \label{sec:gpr}

We briefly describe Gaussian process regression in the following, and we refer the interested reader to \cite{NEAL-1998, MACKAY-1998, MINKA-1998, SEEGER-2004, RASMUSSEN-2006} for further details.

Consider the standard regression model \(f(\vect{x}) = \mu(\vect{x}) + \epsilon(\vect{x})\) where \(\vect{x} \in \mathcal{D}\), consisting of the deterministic mean function \(\mu\) and the zero-mean stochastic function \(\epsilon\). A common form of the mean function is the linear model \(\mu(\vect{x}) = \vect{\phi}^{\ast}(\vect{x}) \vect{\beta}\), where \(\vect{\phi}: \mathcal{D} \to \mathbb{R}^p\) is the array of \(p\) basis functions, and \(\vect{\beta} \in \mathbb{R}^p\) are the unknown regression coefficients to be found. The function \(\epsilon\) represents the uncertainty due to either the regression residual or data and is characterized by the covariance function \(\Sigma(\vect{\theta}): \mathcal{D} \times \mathcal{D} \to \mathbb{R}\) where \(\vect{\theta}\) is the array of hyperparameters.

Suppose the array of data \(\vect{y}\) with the components \(y_i \coloneqq f(\vect{x}_i)\) are known on a set of training points \(\vect{x}_i\), \(i = 1, \cdots, n\). We also discretize \(\vect{\phi}\) and \(\Sigma\) on the training points as follows. Define the full-rank matrix \(\tens{X} \in \mathcal{M}_{n, p}(\mathbb{R})\) by the components \(X_{ij} \coloneqq \phi_{j}(\vect{x}_i)\), which is known as the design matrix. The covariance matrix \(\gtens{\Sigma} \in \mathcal{M}_{n, n}(\mathbb{R})\) is defined by the components \(\Sigma_{ij} = \Sigma(\vect{x}_i, \vect{x}_j | \vect{\theta})\).

A Gaussian process prior on \(f\), denoted by \(f \sim \mathcal{GP}(\mu, \Sigma)\), imposes that the joint distribution of \(f\) on any finite set points is normal. On the training point, this implies \(\vect{y} \sim \mathcal{N}(\tens{X} \vect{\beta}, \gtens{\Sigma})\), \ie the likelihood function of the data is the normal distribution
\begin{equation}
    p(\vect{y}| \vect{\beta}, \vect{\theta}) = \frac{1}{\sqrt{(2\pi)^n}} | \gtens{\Sigma} |^{-\sfrac{1}{2}} \exp\left(-\frac{1}{2} \| \vect{y} - \tens{X} \vect{\beta} \|_{\gtens{\Sigma}^{-1}}^2 \right), \label{eq:likelihood}
\end{equation}
where \(\|\vect{y} - \tens{X} \vect{\beta} \|_{\gtens{\Sigma}^{-1}}^2 = (\vect{y} - \tens{X} \vect{\beta})^{\ast} \gtens{\Sigma}^{-1} (\vect{y} - \tens{X} \vect{\beta})\) is the Mahalanobis distance of the data from its mean with respect to the norm induced by \(\gtens{\Sigma}^{-1}\) as a metric tensor, which we assumed to be non-singular (for singular covariance matrix, see \eg \citep{HENKDON-1985} and \citep{HOLBROOK-2018}).

To provide probabilistic predictions on test points, a Gaussian process is trained on the existing data. Training the Gaussian process means to find the parameter \(\vect{\beta}\) and hyperparameters \(\vect{\theta}\), for instance, by maximizing the posterior function of the hyperparameters \(p(\vect{\beta}, \vect{\theta} | \vect{y}) \propto p(\vect{y} | \vect{\beta}, \vect{\theta}) p(\vect{\beta}, \vect{\theta})\). We assume \(p(\vect{\beta},\vect{\theta}) = p(\vect{\beta}) p(\vect{\theta})\). For simplicity, we also assume \(p(\vect{\theta})\) is the improper uniform distribution and we eliminate it from the posterior. A frequently-used prior for \(\vect{\beta}\) is the normal distribution \(\vect{\beta} \sim \mathcal{N}(\vect{b}, \tens{B})\). It is a common practice to marginalize \(\vect{\beta}\) out of the posterior, leading to the marginal posterior \(p(\vect{\theta} | \vect{y})\), or equivalently, the marginal likelihood \(p(\vect{y} | \vect{\theta})\) given by (see \eg \cite[Equation 2.43]{RASMUSSEN-2006})
\begin{equation}
    p(\vect{y} | \vect{\theta}) = \frac{1}{\sqrt{(2\pi)^n}} | \gtens{\Sigma} |^{-\sfrac{1}{2}} |\tens{X}^{\ast} \gtens{\Sigma}^{-1} \tens{X} |^{-\sfrac{1}{2}} | \tens{B} |^{-\sfrac{1}{2}} \exp\left(-\frac{1}{2} \| \vect{y} - \tens{X} \vect{b} \|_{\tens{M}}^2 \right), \label{eq:like-B}
\end{equation}
where
\begin{equation}
    \tens{M} \coloneqq \gtens{\Sigma}^{-1} - \gtens{\Sigma}^{-1} \tens{X} \left(\tens{X}^{\ast} \gtens{\Sigma}^{-1} \tens{X} + \tens{B}^{-1} \right)^{-1} \tens{X}^{\ast} \gtens{\Sigma}^{-1}. \label{eq:M-like-B}
\end{equation}
By applying the Woodbury matrix identity of \eqref{eq:N-with-B} and \eqref{eq:M-with-B} on \(\tens{M}\), we can write \(\tens{M} = \tilde{\gtens{\Sigma}}^{-1}\), where
\begin{equation}
    \tilde{\gtens{\Sigma}} \coloneqq \gtens{\Sigma} + \tens{X} \tens{B} \tens{X}^{\ast}. \label{eq:eqv-cov}
\end{equation}
Moreover, the matrix determinant lemma in \eqref{eq:det-lemma} simplifies \eqref{eq:like-B} to
\begin{equation}
    p(\vect{y} | \vect{\theta}) = \frac{1}{\sqrt{(2\pi)^n}} | \tilde{\gtens{\Sigma}} |^{-\sfrac{1}{2}} \exp\left(-\frac{1}{2} \| \vect{y} - \tens{X} \vect{b} \|_{\tilde{\tens{\Sigma}}^{-1}}^2 \right). \label{eq:like-B-2}
\end{equation}
In other words, when a normal prior is imposed on \(\vect{\beta}\), the marginal likelihood becomes the normal distribution \(\mathcal{N}(\tens{X}\vect{b}, \tilde{\gtens{\Sigma}})\) (see \eg \cite[Equation 2.40]{RASMUSSEN-2006}). We note that \(\tilde{\tens{\Sigma}}\) acts as the equivalent covariance in the presence of the uncertainty \(\tens{B}\) of the parameter \(\vect{\beta}\). The matrix \(\tens{M}\) can be regarded as the precision matrix of \(\tilde{\gtens{\Sigma}}\).

A special case of the above formulations is when the precision matrix of \(\vect{\beta} \sim \mathcal{N}(\vect{b}, \tens{B})\) vanishes, \ie \(\tens{B}^{-1} \to \tens{0}\), which leads to an improper uniform prior on the parameter \(\vect{\beta}\). In this case, \(\tens{M}\) in \eqref{eq:M-like-B} becomes singular, and the Woodbury matrix identity and matrix determinant lemma that are applied in \eqref{eq:eqv-cov} and \eqref{eq:like-B-2} do not hold. We discuss this case in \Cref{sec:app-like}.


\setcounter{equation}{0}
\renewcommand\theequation{B.\arabic{equation}}

\section{Dataset} \label{sec:dataset}

In our numerical experiment in \Cref{sec:num}, the matrix \(\tens{A}\) is obtained from the covariance of an electrocardiogram (ECG) signal. This signal was taken from MIT-BIH arrhythmia database \citep{MOODY-2001} and is available at PhysioBank \citep{GOLDBERGER-2000}. A short segment of the post-processed ECG signal is displayed in \Cref{fig:ecg-a}. For ease of calculation, the ECG signal is considered wide-sense stationary stochastic process, allowing us to calculate its autocovariance by
\begin{equation*}
    \kappa(\Delta t) = \mathbb{E}[(f(t+\Delta t) - \bar{f})(f(t) - \bar{f})],
\end{equation*}
where \(\Delta t\) is the lag-time in the autocovariance function, \(\mathbb{E}\) is the expectation operator, and \(f\) is the ECG signal with the mean \(\bar{f}\). The covariance matrix \(\tens{A}\) is obtained by the components \(A_{ij} = \kappa(\vert i - j \vert f_s \nu)\) where \(f_s = 360\) Hz is the sampling frequency of the ECG signal and \(\nu = 2\) is the sampling of the autocovariance function. Note that \(\tens{A}\) is a Toeplitz matrix.

\Cref{fig:ecg-b,fig:ecg-c} respectively show the corresponding autocorrelation function \(\tau = \sigma^{-2} \kappa\) and the correlation matrix \(\tens{K} = \sigma^{-2} \tens{A}\) where \(\sigma^2 = \kappa(0)\) is the variance of the signal. Also, \Cref{fig:ecg-d} shows the eigenvalues of the correlation matrix which indicates that \(\tens{K}\) (and hence \(\tens{A}\)) is positive-definite as all the eigenvalues are positive.

\begin{figure}[t!]
    \centering
    \includegraphics[width=\textwidth]{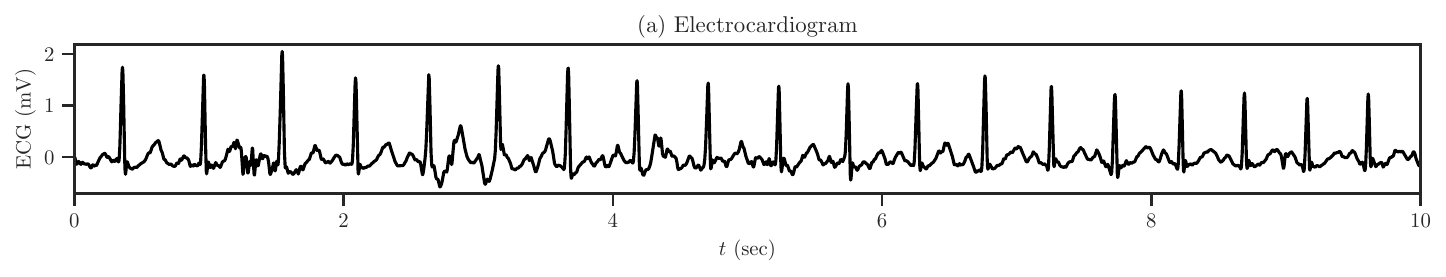}
    \includegraphics[width=\textwidth]{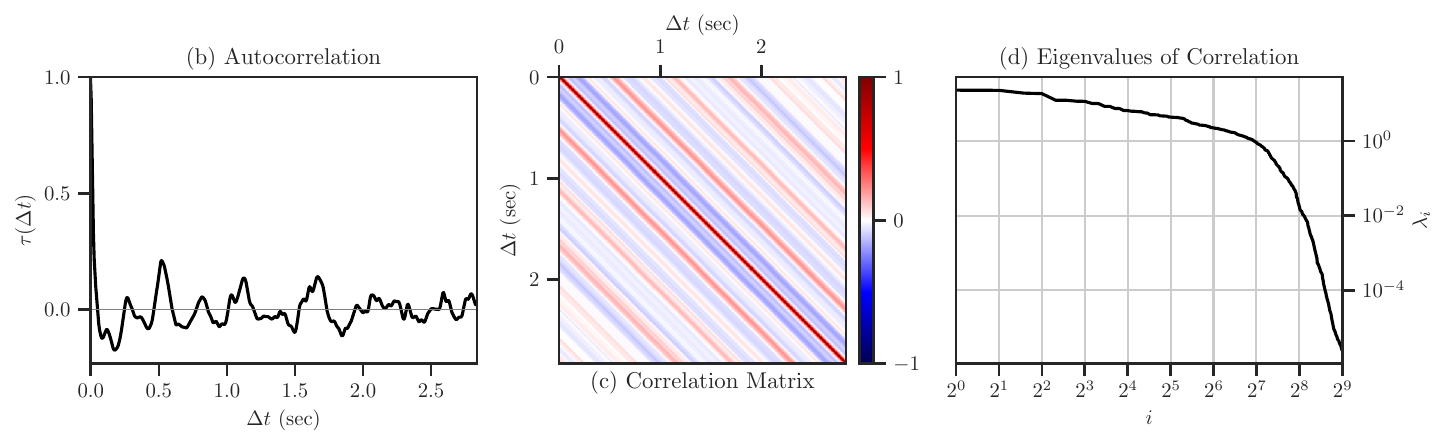}
    \caption{(a) ECG signal. (b) Autocorrelation function and (c) correlation matrix of the ECG signal. (d) Eigenvalues of the correlation matrix.}
    {\phantomsubcaption\label{fig:ecg-a}}
    {\phantomsubcaption\label{fig:ecg-b}}
    {\phantomsubcaption\label{fig:ecg-c}}
    {\phantomsubcaption\label{fig:ecg-d}}
    \label{fig:ecg}
\end{figure}
 
\end{appendices}


\phantomsection{}
\addcontentsline{toc}{section}{References}



\bibliographystyle{apalike2}

\small
\bibliography{References}

\begin{thebibliography}{}

\bibitem[Ameli \& Shadden, 2022a]{AMELI-2022-d}
Ameli, S. \& Shadden, S.~C. (2022a).
\newblock {DetKit}, a python package for computing matrix determinant
  functions.
\newblock \url{https://ameli.github.io/detkit}.

\bibitem[Ameli \& Shadden, 2022b]{AMELI-2022-b}
Ameli, S. \& Shadden, S.~C. (2022b).
\newblock {GLearn}, a high-performance python package for machine learning
  using {Gaussian} process.
\newblock \url{https://ameli.github.io/glearn}.

\bibitem[Ameli \& Shadden, 2022c]{AMELI-2022-c}
Ameli, S. \& Shadden, S.~C. (2022c).
\newblock {IMATE}, a high-performance python package for implicit matrix trace
  estimation.
\newblock \url{https://ameli.github.io/imate}.

\bibitem[Ameli \& Shadden, 2022d]{AMELI-2020}
Ameli, S. \& Shadden, S.~C. (2022d).
\newblock Interpolating log-determinant and trace of the powers of matrix
  \(\mathbf{A} + t \mathbf{B}\).
\newblock {\em Statistics and Computing}, 32(6), 108.

\bibitem[Ameli \& Shadden, 2022e]{AMELI-2022-a}
Ameli, S. \& Shadden, S.~C. (2022e).
\newblock Noise estimation in {Gaussian} process regression.
\newblock \emph{arXiv}: 2206.09976 [cs.LG].

\bibitem[Arias et~al., 2015]{ARIAS-2015}
Arias, M.~L., Corach, G., \& Maestripieri, A. (2015).
\newblock Range additivity, shorted operator and the
  {Sherman-Morrison-Woodbury} formula.
\newblock {\em Linear Algebra Appl.}, 467, 86--99.

\bibitem[Bajo, 2021]{BAJO-2021}
Bajo, I. (2021).
\newblock Computing {Moore-Penrose} inverses with polynomials in matrices.
\newblock {\em Amer. Math. Monthly}, 128(5), 446--456.

\bibitem[Ben-Israel \& Greville, 2003]{ISRAEL-2003}
Ben-Israel, A. \& Greville, T. N.~E. (2003).
\newblock {\em Generalized Inverses: Theory and Applications}.
\newblock CMS Books in Mathematics. Springer.

\bibitem[Bott \& Duffin, 1953]{BOTT-1953}
Bott, R. \& Duffin, R.~J. (1953).
\newblock On the algebra of networks.
\newblock {\em Trans. Amer. Math. Soc.}, 74(1), 99--109.

\bibitem[Bouldin, 1973]{BOULDIN-1973}
Bouldin, R. (1973).
\newblock The pseudo-inverse of a product.
\newblock {\em SIAM J. Appl. Math.}, 24(4), 489--495.

\bibitem[Campbell \& Meyer, 2009]{CAMPBELL-2009}
Campbell, S.~L. \& Meyer, C.~D. (2009).
\newblock {\em Generalized Inverses of Linear Transformations}.
\newblock Classics in Applied Mathematics. Society for Industrial and Applied
  Mathematics.

\bibitem[Chen et~al., 2003]{CHEN-2003}
Chen, G., Liu, G., \& Xue, Y. (2003).
\newblock Perturbation analysis of the generalized {Bott-Duffin} inverse of
  {$L$-zero} matrices.
\newblock {\em Linear Multilinear Algebra}, 51(1), 11--20.

\bibitem[Chen \& Chen, 2000]{CHEN-2000}
Chen, Y. \& Chen, X. (2000).
\newblock Representation and approximation of the outer inverse
  {$A_{T,S}^{(2)}$} of a matrix {$A$}.
\newblock {\em Linear Algebra Appl.}, 308(1), 85--107.

\bibitem[Dautray \& Lions, 2000]{DAUTRAY-2000}
Dautray, R. \& Lions, J.-L. (2000).
\newblock {\em Mathematical Analysis and Numerical Methods for Science and
  Technology: Volume 2 Functional and Variational Methods}.
\newblock Springer, Berlin Heidelberg.

\bibitem[Deng \& Chen, 2007]{denG-2007}
Deng, B. \& Chen, G. (2007).
\newblock A note on the generalized {Bott-Duffin} inverse.
\newblock {\em Appl. Math. Lett.}, 20(7), 746--750.

\bibitem[Deng, 2011]{DENG-2011}
Deng, C.~Y. (2011).
\newblock A generalization of the {Sherman-Morrison-Woodbury} formula.
\newblock {\em Appl. Math. Lett.}, 24(9), 1561--1564.

\bibitem[Fill \& Fishkind, 1999]{FILL-1999}
Fill, J.~A. \& Fishkind, D.~E. (1999).
\newblock The {Moore--Penrose} generalized inverse for sums of matrices.
\newblock {\em SIAM J. Matrix Anal. Appl.}, 21(2), 629--635.

\bibitem[Getson \& Hsuan, 1988]{GETSON-1988}
Getson, A.~J. \& Hsuan, F.~C. (1988).
\newblock {\em {$\{2\}$-Inverses} and Their Statistical Application}.
\newblock Lecture Notes in Statistics. Springer New York.

\bibitem[Goldberger et~al., 2000]{GOLDBERGER-2000}
Goldberger, A.~L., Amaral, L. A.~N., Glass, L., Hausdorff, J.~M., Ivanov,
  P.~C., Mark, R.~G., Mietus, J.~E., Moody, G.~B., Peng, C.-K., \& Stanley,
  H.~E. (2000).
\newblock Physiobank, physiotoolkit, and physionet.
\newblock {\em Circulation}, 101(23), e215--e220.

\bibitem[Golub \& Van~Loan, 1996]{GOLUB-1996}
Golub, G.~H. \& Van~Loan, C.~F. (1996).
\newblock {\em Matrix Computations (3rd Ed.)}.
\newblock Johns Hopkins University Press.

\bibitem[Grob, 1999]{GROB-1999}
Grob, J. (1999).
\newblock On oblique projection, rank additivity and the {Moore--Penrose}
  inverse of the sum of two matrices.
\newblock {\em Linear Multilinear Algebra}, 46(4), 265--275.

\bibitem[Hager, 1989]{HAGER-1989}
Hager, W.~W. (1989).
\newblock Updating the inverse of a matrix.
\newblock {\em SIAM Rev.}, 31(2), 221--239.

\bibitem[Halmos, 1982]{HALMOS-1982}
Halmos, P.~R. (1982).
\newblock {\em A {Hilbert} Space Problem Book}.
\newblock Graduate Texts in Mathematics. Springer.

\bibitem[Harville, 1997]{HARVILLE-1997}
Harville, D.~A. (1997).
\newblock {\em Matrix algebra from a statistician's perspective}.
\newblock Springer New York.

\bibitem[Henderson \& Searle, 1981]{HENDERSON-1981}
Henderson, H.~V. \& Searle, S.~R. (1981).
\newblock On deriving the inverse of a sum of matrices.
\newblock {\em SIAM Rev.}, 23(1), 53--60.

\bibitem[{Henk Don}, 1985]{HENKDON-1985}
{Henk Don}, F. (1985).
\newblock The use of generalized inverses in restricted maximum likelihood.
\newblock {\em Linear Algebra Appl.}, 70, 225--240.

\bibitem[Holbrook, 2018]{HOLBROOK-2018}
Holbrook, A. (2018).
\newblock Differentiating the pseudo determinant.
\newblock {\em Linear Algebra App.}, 548, 293--304.

\bibitem[Horn \& Johnson, 1990]{HORN-1990}
Horn, R.~A. \& Johnson, C.~R. (1990).
\newblock {\em Matrix Analysis}.
\newblock Cambridge University Press.

\bibitem[Hsuan et~al., 1985]{HSUAN-1985}
Hsuan, F., Langenberg, P., \& Getson, A. (1985).
\newblock The {$\{2\}$-inverse} with applications in statistics.
\newblock {\em Linear Algebra Appl.}, 70, 241--248.

\bibitem[Knill, 2014]{KNILL-2014}
Knill, O. (2014).
\newblock {Cauchy-Binet} for pseudo-determinants.
\newblock {\em Linear Algebra Appl.}, 459, 522--547.

\bibitem[MacKay, 1998]{MACKAY-1998}
MacKay, D. J.~C. (1998).
\newblock Introduction to {Gaussian} processes.
\newblock In C.~M. Bishop (Ed.), {\em Neural Networks and Machine Learning},
  NATO ASI Series  (pp.\ 133--166).: Kluwer Academic Press.

\bibitem[Meyer, 2001]{MEYER-2001}
Meyer, C.~D. (2001).
\newblock {\em Matrix Analysis and Applied Linear Algebra}.
\newblock SIAM.

\bibitem[Minka, 1998]{MINKA-1998}
Minka, T. (1998).
\newblock Inferring a {Gaussian} distribution.
\newblock MIT media Lab note.

\bibitem[Moody \& Mark, 2001]{MOODY-2001}
Moody, G.~B. \& Mark, R.~G. (2001).
\newblock The impact of the {MIT-BIH} arrhythmia database.
\newblock {\em IEEE Eng Med Biol Mag}, 20(3), 45--50.

\bibitem[Neal, 1998]{NEAL-1998}
Neal, R.~M. (1998).
\newblock Regression and classification using {Gaussian} process priors.
\newblock {\em Bayesian Statistics}, 6, 475--501.

\bibitem[Penrose, 1955]{PENROSE-1955}
Penrose, R. (1955).
\newblock A generalized inverse for matrices.
\newblock {\em Math. Proc. Cambridge Philos. Soc.}, 51(3), 406–413.

\bibitem[Rasmussen \& Williams, 2006]{RASMUSSEN-2006}
Rasmussen, C.~E. \& Williams, C. K.~I. (2006).
\newblock {\em Gaussian processes for machine learning.}
\newblock Adaptive computation and machine learning. MIT Press.

\bibitem[Riedel, 1992]{RIEDEL-1992}
Riedel, K.~S. (1992).
\newblock A {Sherman--Morrison--Woodbury} identity for rank augmenting matrices
  with application to centering.
\newblock {\em SIAM J. Matrix Anal. Appl.}, 13(2), 659--662.

\bibitem[Rue \& Held, 2005]{RUE-2005}
Rue, H. \& Held, L. (2005).
\newblock {\em Gaussian {M}arkov Random Fields: {T}heory and Applications},
  volume 104 of {\em Monographs on Statistics and Applied Probability}.
\newblock London: Chapman \& Hall.

\bibitem[Seeger, 2004]{SEEGER-2004}
Seeger, M. (2004).
\newblock Gaussian processes for machine learning.
\newblock {\em Int. J. Neural Syst.}, 14(02), 69--106.
\newblock PMID: 15112367.

\bibitem[Sheng \& Chen, 2007]{SHENG-2007}
Sheng, X. \& Chen, G. (2007).
\newblock Full-rank representation of generalized inverse {$A_{T,S}(2)$} and
  its application.
\newblock {\em Comput. Math. Appl.}, 54(11), 1422--1430.

\bibitem[Stanimirovi{\'c}, 2017]{STANIMIROVIC-2017}
Stanimirovi{\'c}, I. (2017).
\newblock {\em Computation of Generalized Matrix Inverses and Applications}.
\newblock Apple Academic Press.

\bibitem[Stanimirovi\'{c} et~al., 2012]{STANIMIROVIC-2012}
Stanimirovi\'{c}, P.~S., Pappas, D., Katsikis, V.~N., \& Stanimirovi\'{c},
  I.~P. (2012).
\newblock Full-rank representations of outer inverses based on the {QR}
  decomposition.
\newblock {\em Appl. Math. Comput.}, 218(20), 10321--10333.

\bibitem[Tian, 2005]{TIAN-2005}
Tian, Y. (2005).
\newblock The {Moore-Penrose} inverse for sums of matrices under rank
  additivity conditions.
\newblock {\em Linear Multilinear Algebra}, 53(1), 45--65.

\bibitem[Tian \& Wang, 2011]{TIAN-2011}
Tian, Y. \& Wang, H. (2011).
\newblock Characterizations of {EP} matrices and weighted-{EP} matrices.
\newblock {\em Linear Algebra Appl.}, 434(5), 1295--1318.

\bibitem[Trefethen \& Bau, 1997]{TREFETHEN-1997}
Trefethen, L.~N. \& Bau, D. (1997).
\newblock {\em Numerical Linear Algebra}.
\newblock SIAM.

\bibitem[Wang et~al., 2018]{WANG-2018}
Wang, G., Wei, Y., \& Qiao, S. (2018).
\newblock {\em Generalized Inverses: Theory and Computations}.
\newblock Developments in Mathematics. Springer Singapore.

\bibitem[Wei, 1998]{WEI-1998}
Wei, Y. (1998).
\newblock A characterization and representation of the generalized inverse
  {$A^{(2)}_{T,S}$} and its applications.
\newblock {\em Linear Algebra Appl.}, 280(2), 87--96.

\bibitem[Yonglin, 1990]{YONGLIN-1990}
Yonglin, C. (1990).
\newblock The generalized {Bott-Duffin} inverse and its applications.
\newblock {\em Linear Algebra Appl.}, 134, 71--91.

\bibitem[Zhang, 2002]{ZHANG-2002}
Zhang, S. (2002).
\newblock A characterization and determinantal formula for the generalized
  inverse {$A_{T,S}^{(2)}$} and its applications.
\newblock {\em Appl. Math. Comput}, 125(2), 261--269.

\end{thebibliography}


\end{document}